\newtheorem{theorem}{Theorem} 
\newtheorem{lemma}{Lemma}
\newtheorem{corollary}{Corollary}
\newtheorem{conjecture}{Conjecture}
\theoremstyle{definition}
\newtheorem{definition}{Definition}
\newtheorem{remark}{Remark}
\newcommand{\I}{\mathcal{I}}
\newcommand{\LL}{\mathcal{L}}
\newcommand{\OO}{\mathcal{O}}
\newcommand{\TT}{\mathcal{T}}
\newcommand{\W}{\mathcal{W}}
\newcommand{\C}{\mathscr{C}}
\newcommand{\M}{\mathscr{M}}
\newcommand{\N}{\mathscr{N}}
\newcommand{\PG}{\mathrm{PG}}
\newcommand{\RC}{\mathrm{RC}}
\newcommand{\RA}{\mathrm{RA}}
\newcommand{\Tr}{\mathrm{T}}
\newcommand{\IC}{\mathrm{IC}}
\newcommand{\IA}{\mathrm{IA}}
\newcommand{\UG}{\mathrm{U\Gamma}}
\newcommand{\UnG}{\mathrm{Un\Gamma}}
\newcommand{\EG}{\mathrm{E\Gamma}}
\newcommand{\EnG}{\mathrm{En\Gamma}}
\newcommand{\Ar}{\mathrm{A}}
\newcommand{\EA}{\mathrm{EA}}
\newcommand{\TO}{\mathrm{TO}}
\newcommand{\MM}{\mathbf{M}}
\newcommand{\Pf}{\mathbf{P}}
\newcommand{\F}{\mathbb{F}}
\newcommand{\Pb}{\mathbb{P}}
\newcommand{\Lb}{\mathbb{L}}
\newcommand{\A}{\mathfrak{A}}
\newcommand{\Lk}{\mathfrak{L}}
\newcommand{\Mk}{\mathfrak{M}}
\newcommand{\Pk}{\mathfrak{P}}
\newcommand{\pk}{\mathfrak{p}}
\newcommand{\od}{\mathrm{od}}
\newcommand{\ev}{\mathrm{ev}}
\newcommand{\T}{\text}
\newcommand{\db}{\displaybreak[3]}
\newcommand{\dbn}{\displaybreak[3]\notag}
\newcommand{\nt}{\notag}
\begin{document}
\title{Twisted cubic and point-line incidence matrix in $\PG(3,q)$
\date{}
}
\maketitle

\begin{center}
{\sc Alexander A. Davydov
\footnote{A.A. Davydov ORCID \url{https://orcid.org/0000-0002-5827-4560}}
}\\
{\sc\small Institute for Information Transmission Problems (Kharkevich institute)}\\
 {\sc\small Russian Academy of Sciences}\\
 {\sc\small Moscow, 127051, Russian Federation}\\\emph {E-mail address:} adav@iitp.ru\medskip\\
 {\sc Stefano Marcugini
 \footnote{S. Marcugini ORCID \url{https://orcid.org/0000-0002-7961-0260}},
 Fernanda Pambianco
 \footnote{F. Pambianco ORCID \url{https://orcid.org/0000-0001-5476-5365}}
 }\\
 {\sc\small Department of  Mathematics  and Computer Science,  Perugia University,}\\
 {\sc\small Perugia, 06123, Italy}\\
 \emph{E-mail address:} \{stefano.marcugini, fernanda.pambianco\}@unipg.it
\end{center}

\begin{abstract}
We consider the structure of the point-line incidence matrix of the projective space $\PG(3,q)$ connected with orbits of points and lines under the stabilizer group of the twisted cubic.  Structures of submatrices with incidences between a union of line orbits and an orbit of points are investigated.
 For the unions consisting of two or three line orbits, the original submatrices are split into new ones, in which the incidences  are also considered.
For each submatrix (apart from the ones corresponding to a special type of lines), the numbers of lines through every point and of points lying on  every line are obtained. This corresponds to the numbers of ones in columns and rows of the submatrices.
 \end{abstract}

\textbf{Keywords:} Twisted cubic, Projective space, Incidence matrix, Orbits of lines

\textbf{Mathematics Subject Classification (2010).} 51E21, 51E22

\section{Introduction}
Let $\F_{q}$ be the Galois field with $q$ elements.
Let $\PG(N,q)$ be the $N$-dimensio\-nal projective space over $\F_q$. An $n$-arc in  $\PG(N,q)$, with $n\ge N + 1\ge3$, is a set of $n$ points such that no $N +1$ points belong to the same hyperplane of $\PG(N,q)$, see \cite{BallLavrauw} and the references therein.   For an introduction to projective geometry over finite fields see \cite{Hirs_PGFF,HirsStor-2001,HirsThas-2015}.

In $\PG(N,q)$, $2\le N\le q-2$, a normal rational curve is a $(q+1)$-arc projectively equivalent to the arc
$\{(t^N,t^{N-1},\ldots,t^2,t,1):t\in \F_q\}\cup \{(1,0,\ldots,0)\}$. In $\PG(3,q)$, the normal rational curve is called a  \emph{twisted cubic} \cite{Hirs_PG3q,HirsThas-2015}.

The twisted cubic has many interesting properties and is connected with distinct combinatorial and applied problems, see e.g. \cite{BDMP-TwCub,BlokPelSzo,BonPolvTwCub,BrHirsTwCub,CLPolvT_Spr,CasseGlynn82,CasseGlynn84,CosHirsStTwCub,DMP_RSCoset,DMP_PlLineInc,DMP_OrbLine,%
GiulVincTwCub,GulLav,Hirs_PG3q,HirsStor-2001,HirsThas-2015,LunarPolv,ZanZuan2010} and the references therein. In particular, using properties of the twisted cubic, spreads in $\PG(3,q)$ are studied \cite{BrHirsTwCub,CLPolvT_Spr,LunarPolv},  optimal multiple covering codes are constructed \cite{BDMP-TwCub},  the weight distributions of cosets and their leaders for the Reed-Solomon codes are obtained \cite{BlokPelSzo,DMP_RSCoset}, the three-level secret sharing schemes are considered \cite{GiulVincTwCub}.

In investigations of the twisted cubic, an important direction is to determine the matrices of the incidences between points, planes, and lines partitioned into orbits under the group $G_q$ fixing the cubic. The orbits of planes and points are known and described in detail \cite{Hirs_PG3q}. The \emph{point-plane} incidence matrix of $\PG(3,q)$ for all $q\ge2$ is given in \cite{BDMP-TwCub} where the numbers of distinct planes through distinct points and, conversely, the numbers of distinct points lying in distinct planes are obtained. (By ``distinct planes" we mean ``planes from distinct orbits", and similarly for points and lines.)

For plane-line and point-line incidence matrices a description of line orbits is needed. In \cite{Hirs_PG3q}, the lines in $\PG(3,q)$ are partitioned into classes, each of which is a union of line orbits under $G_q$; see Section 2.2. Apart from one class (which is denoted by $\OO_6$), the number and the structure of the orbits forming those unions are independently considered by distinct methods in \cite[Sections 3, 8]{DMP_OrbLine} (for all $q\ge2$),  \cite[Section 7]{BlokPelSzo} (for all $q\ge23$), and \cite{GulLav}  (for finite fields of characteristic $> 3$); see also the references therein.

The classification of the line orbits in the class $\OO_6$ is an open problem.

The results on line orbits from \cite{BlokPelSzo,DMP_OrbLine,GulLav} are in accordance with each other. The representation and description of the orbits in these papers are distinct; in particular, in \cite{DMP_OrbLine}, the orbits are given in a form which is convenient for the investigations in \cite{DMP_PlLineInc}.
More precisely, using the representation of the line orbits in \cite{DMP_OrbLine}, the \emph{plane-line} incidence matrix of $\PG(3,q)$ is given in \cite{DMP_PlLineInc}
where, apart from $\OO_6$, for all $q\ge2$, the numbers of distinct planes through distinct lines and, vice versa, the numbers of distinct lines lying in distinct planes are obtained. For $\OO_6$, the corresponding average values are calculated.

In \cite{GulLav}, apart from $\OO_6$, for odd $q\not\equiv0\pmod3$ the numbers of distinct planes through distinct lines (called ``the plane orbit distribution of a line") and the numbers of distinct points lying on distinct lines
(called ``the point orbit distribution of a line") are obtained.
For finite fields of characteristic $> 3$, the results of \cite{GulLav} on ``the plane orbit distribution of a line" are in accordance with those from \cite{DMP_PlLineInc} on plane-line incidence matrix.

The results of \cite{GulLav} on ``the point orbit distribution of a line" are an important step towards the point-line incidence matrix. However, these results are obtained only for odd $q\not\equiv0\pmod3$ and
the computation of  the numbers of distinct lines through distinct points has been left open.

In this paper, we obtain the \emph{point-plane} incidence matrix for all $q\ge2$, leaving open the questions related to $\OO_6$.

We consider the structure of the point-line incidence matrix with respect to $G_q$.
We use the partitions of planes and lines into orbits and unions of orbits under the group $G_q$, as described in \cite{DMP_OrbLine,Hirs_PG3q}. We search the structures of the submatrices with incidences between an orbit of points and a union of line orbits.
 For the unions consisting of two or three line orbits, the original submatrices are split into new ones, in which the incidences  are also considered.
For each submatrix (apart from the ones related to $\OO_6$), the numbers of distinct points lying on distinct lines and, conversely,   the numbers of distinct lines through distinct points are obtained.
This corresponds to the numbers of ones in columns and rows of the submatrices.

The results noted are obtained for all $q\ge2$ including even $q$ and $q\equiv0\pmod3$. Thus, the gaps of \cite {GulLav} in the point-line incidence matrix are filled.

For $\OO_6$, some average and cumulative values are calculated.

Many submatrices considered are configurations in the sense of \cite
{GroppConfig}, see Definition~\ref{def2_config} in Section \ref{subsec_incid}. Such configurations are useful in several distinct areas, in particular, to construct bipartite graph codes without the so-called 4-cycles, see e.g.  \cite{BargZem,DGMP_BipGraph,HohJust} and the references therein.

The paper is organized as follows. Section \ref{sec_prelimin} contains preliminaries. In Section~\ref{sec_mainres}, the main results of this paper are summarized. Some useful relations are given in Section~\ref{sec:useful}.  The numbers of distinct points lying on distinct lines and, vice versa,   the numbers of distinct lines through distinct points are obtained in Sections \ref{sec:results_q_ne0} (for even and odd $q\not\equiv0\pmod3$) and \ref{sec:results_q=0}
(for $q\equiv0\pmod3$). Some  general results are given in Section \ref{sec:gen res}.

\section{Preliminaries}\label{sec_prelimin}
Throughout the paper, we consider orbits of lines and points under $G_q$ apart from Theorem \ref{th3:q=2 3 4} in Section \ref{sec_mainres}.
\subsection{Twisted cubic}\label{subset_twis_cub}
In this subsection, including Theorem \ref{th2_Hirs}, we summarize  some results from \cite{Hirs_PG3q} useful in this paper.

The space $\PG(N,q)$ contains $\theta_{N,q}$ points and hyperplanes, and $\beta_{N,q}$ lines;
\begin{align}\label{eq1_theta_lambda}
 \theta_{N,q}=\frac{q^{N+1}-1}{q-1} ,~\beta_{N,q}=\frac{(q^{N+1}-1)(q^{N+1}-q)}{(q^2-1)(q^2-q)}\,.
\end{align}
Let $\boldsymbol{\pi}(c_0,c_1,c_2,c_3)$ be the plane of $\PG(3,q)$ with equation
\begin{align}\label{eq2_plane}
  c_0x_0+c_1x_1+c_2x_2+c_3x_3=0,~c_i\in\F_q.
\end{align}
We denote $\F_{q}^*=\F_{q}\setminus\{0\}$, $\F_q^+=\F_q\cup\{\infty\}$. Let $\Pf(x_0,x_1,x_2,x_3)\in\PG(3,q)$ be a point  with homogeneous coordinates $x_i\in\F_{q}$.
Let  $P(t)$ be a point with
\begin{align}\label{eq2:P(t)}
t\in\F_q^+;~  P(t)=\Pf(t^3,t^2,t,1)\text{ if }t\in\F_q;~~P(\infty)=\Pf(1,0,0,0).
\end{align}
Let $\C\subset\PG(3,q)$ be the \emph{twisted cubic} in the canonical form
\begin{align}\label{eq2_cubic}
&\C=\{P_1,P_2,\ldots,P_{q+1}\}=\{P(t)\,|\,t\in\F_q^+\}.
\end{align}
where $P_1,\ldots,P_{q+1}$  are points no four of which are coplanar.

The \emph{osculating plane} $\pi_\T{osc}(t)$ at the  point $P(t)\in\C$ has the form
\begin{align}\label{eq2_osc_plane}
&\pi_\T{osc}(t)=\boldsymbol{\pi}(1,-3t,3t^2,-t^3)\T{ if }t\in\F_q; ~\pi_\T{osc}(\infty)=\boldsymbol{\pi}(0,0,0,1).
\end{align}
 The $q+1$ osculating planes form the \emph{osculating developable} $\mathrm{\Gamma}$ to $\C$.
For $q\equiv0\pmod3$, the osculating developable is a \emph{pencil of planes}.

\begin{definition}
 \begin{description}
   \item[(i)]
A \emph{chord} of $\C$ is a line through a pair of real points of $\C$ or a pair of complex conjugate points. If the real points coincide with each other, the chord is a \emph{tangent} to $\C$; if they are distinct, we have a \emph{real chord}. For a pair of complex conjugate points, we have an \emph{imaginary chord}.

   \item[(ii)] An \emph{axis} of $\mathrm{\Gamma}$ is a line of $\PG(3,q)$ which is the intersection of a pair of real planes or complex conjugate planes of $\mathrm{\Gamma}$. If the real planes coincide with each other, the axis is a \emph{tangent} to $\C$; if they are distinct it is a \emph{real axis}. For complex conjugate planes, we have an \emph{imaginary axis}.
 \end{description}
\end{definition}
The null polarity $\A$ \cite[Sections 2.1.5, 5.3]{Hirs_PGFF}, \cite[Theorem 21.1.2]{Hirs_PG3q} is given by
\begin{align}\label{eq2_null_pol}
&\Pf(x_0,x_1,x_2,x_3)\A=\boldsymbol{\pi}(x_3,-3x_2,3x_1,-x_0),~q\not\equiv0\pmod3.
\end{align}

\textbf{Notation 1}
~We consider $q\equiv\xi\pmod3$, $\xi\in\{-1,0,1\}$. Values depending of $\xi$ are
 noted by remarks or by superscripts ``$(\xi)$''. The remarks and superscripts ``$(\xi)$'' are not used
 if a value is the same for all $q$ or a property holds for all $q$, or it is not relevant, or it is clear by the context. If a value is the same for $\xi=-1,1$, then one may use the superscript ``$\ne0$''. Also, in superscripts, instead of ``$\bullet$'', one can write ``$\mathrm{ev}$'' for even $q$ or ``$\mathrm{od}$'' for odd $q$. If a value is the same for even and odd $q$, we may omit ``$\bullet$''.

The following notation is used.
\begin{align*}
  &G_q && \T{the group of projectivities in } \PG(3,q) \T{ fixing }\C;\db  \\
  &\mathbf{Z}_n&&\T{cyclic group of order }n;\db  \\
  &\mathbf{S}_n&&\T{symmetric group of degree }n;\db  \\
&A^{tr}&&\T{the transposed matrix of }A;\db \\
&\#S&&\T{the cardinality of a set }S;\db\\
&\overline{AB}&&\T{the line through the points $A$ and }B;\db\\
&\triangleq&&\T{the sign ``equality by definition"}.\db\\
&&&\T{\textbf{Types $\pi$ of planes:}}\db\\
&\mathrm{\Gamma}\T{-plane}  &&\T{an osculating plane of }\mathrm{\Gamma};\db \\
&d_\C\T{-plane}&&\T{a plane containing \emph{exactly} $d$ distinct points of }\C,~d=0,2,3;\db \\
&\overline{1_\C}\T{-plane}&&\T{a plane not in $\mathrm{\Gamma}$ containing \emph{exactly} 1 point of }\C;\db \\
&\Pk&&\T{the list of possible types $\pi$ of planes},~\Pk\triangleq\{\mathrm{\Gamma},2_\C,3_\C,\overline{1_\C},0_\C\};\db\\
&\pi\T{-plane}&&\T{a plane of type }\pi\in\Pk; \db\\
&\N_\pi&&\T{the orbit of $\pi$-planes under }G_q,~\pi\in\Pk.\db\\
&&&\T{\textbf{Types $\pk$ of points with respect to the twisted cubic $\C$:}}\db\\
&\C\T{-point}&&\T{a point  of }\C;\db\\
&\mu_\mathrm{\Gamma}\T{-point}&&\T{a point  off $\C$ lying on \emph{exactly} $\mu$ distinct osculating planes;}\db\\
&\Tr\T{-point}&&\T{a point  off $\C$  on a tangent to $\C$ for }\xi\ne0;\db\\
&\TO\T{-point}&&\T{a point  off $\C$ on a tangent and one osculating plane for }\xi=0;\db\\
&\RC\T{-point}&&\T{a point  off $\C$  on a real chord;}\db\\
&\IC\T{-point}&&\T{a point  on an imaginary chord (it always is off $\C$);}\\
&\Mk^{(\xi)}&&\T{the list of possible types $\pk$ of points},\db\\
&&&\Mk^{(\ne0)}\triangleq\{\C,0_\mathrm{\Gamma},1_\mathrm{\Gamma},3_\mathrm{\Gamma},\Tr,\RC,\IC\},\db\\
&&&\Mk^{(0)}\triangleq\{\C,(q+1)_\mathrm{\Gamma},\TO,\RC,\IC\};\db\\
&\M_\pk^{(\xi)}&&\T{the orbit of $\pk$-points under }G_q,~\pk\in\Mk^{(\xi)}.\db\\
&&&\T{\textbf{Types $\lambda$ of lines with respect to the twisted cubic $\C$:}}\db\\
&\RC\T{-line}&&\T{a real chord  of $\C$;}\db \\
&\RA\T{-line}&&\T{a real axis of $\mathrm{\Gamma}$ for }\xi\ne0;\db \\
&\Tr\T{-line}&&\T{a tangent to $\C$};\db \\
&\IC\T{-line}&&\T{an imaginary chord  of $\C$;}\db \\
&\IA\T{-line}&&\T{an imaginary axis of $\mathrm{\Gamma}$ for }\xi\ne0;\db \\
&\UG&&\T{a non-tangent unisecant in a $\mathrm{\Gamma}$-plane;}\db \\
&\UnG\T{-line}&&\T{a unisecant not in a $\mathrm{\Gamma}$-plane (it is always non-tangent);}\db \\
&\EG\T{-line}&&\T{an external line in a $\mathrm{\Gamma}$-plane (it cannot be a chord);}\db \\
&\EnG\T{-line}&&\T{an external line, other than a chord, not in a $\mathrm{\Gamma}$-plane;}\db \\
&\Ar\T{-line}&&\T{the axis of the pencil of $\mathrm{\Gamma}$-planes for }\xi=0;\db\\
&\EA\T{-line}&&\T{an external line meeting the axis of $\mathrm{\Gamma}$ for }\xi=0;\db\\
&\Lk^{(\xi)}&&\T{the list of possible types $\lambda$ of lines},\db\\
&&&\Lk^{(\ne0)}\triangleq\{\RC,\RA,\Tr,\IC,\IA,\UG,\UnG,\EG,\EnG\}\T{ for }\xi\ne0,\db\\
&&&\Lk^{(0)}\triangleq\{\RC,\Tr,\IC,\UG,\UnG, \EnG,\Ar,\EA\}\T{ for }\xi=0;\db\\
&\lambda\T{-line}&&\T{a line of type }\lambda\in\Lk^{(\xi)};\db\\
&&&\textbf{Orbits of lines.\,Plane-line incidence matrix.}\, \pi\in\Pk,\lambda\in\Lk^{(\xi)}\db\\
&L_{\lambda\mathrm{\Sigma}}^{(\xi)\bullet}&&\T{the total number of orbits of $\lambda$-lines};\db\\
&\OO_\lambda&&\T{the union (class) of all $L_{\lambda\mathrm{\Sigma}}^{(\xi)\bullet}$ orbits of $\lambda$-lines};\\
&\OO_{\lambda_j}&&\T{the $j$-th orbit of the class }\OO_\lambda,~ j=1,\ldots,L_{\lambda\mathrm{\Sigma}}^{(\xi)\bullet},~\OO_\lambda=\bigcup_{j=1}^{L_{\lambda\mathrm{\Sigma}}^{(\xi)\bullet}}\OO_{\lambda_j};\db\\
&\OO_{i_j}&&\T{the $j$-th orbit of the class }\OO_i;\db\\
&\lambda_j\T{-lines}&&\lambda\T{-lines forming the $j$-th orbit $\OO_{\lambda_j}$ of the class }O_{\lambda};\db\\
&\mathrm{\Lambda}_{\lambda_j,\pi}^{(\xi)\bullet}&&\T{the number of lines from an orbit $\OO_{\lambda_j}$ in a $\pi$-plane};\db\\
&\mathrm{\Lambda}_{\lambda,\pi}^{(\xi)\bullet}&&\T{the total number of $\lambda$-lines in a $\pi$-plane};\db\\
&\mathrm{\Pi}_{\pi,\lambda_j}^{(\xi)\bullet} &&\T{the exact number of $\pi$-planes through a line of an orbit $\OO_{\lambda_j}$};\db\\
&\mathrm{\Pi}_{\pi,\lambda}^{(\xi)\bullet}&&\T{the average number of $\pi$-planes through a $\lambda$-line over all the}\db\\
&&&\T{$\lambda$-lines; if the class $\OO_\lambda$ consists of \emph{a single orbit} then }\mathrm{\Pi}_{\pi,\lambda}^{(\xi)}\db\\
&&&\T{is \emph{the exact number} of $\pi$-planes through each $\lambda$-line};\db\\
&\I^{\mathrm{\Pi}\mathrm{\Lambda}}&&\T{the $\beta_{3,q}\times\theta_{3,q}$ plane-line incidence matrix  of }\PG(3,q);\db\\
&\I_{\pi,\lambda}^{\mathrm{\Pi}\mathrm{\Lambda}}&&\T{the $\#\OO_\lambda\times\#\N_\pi$ submatrix of  $\I^{\mathrm{\Pi}\mathrm{\Lambda}}$ with incidences between }\db\\
&&&\T{$\pi$-planes and $\lambda$-lines}; \db\\
&\I_{\pi,\lambda_j}^{\mathrm{\Pi}\mathrm{\Lambda}}&&\T{the $\#\OO_{\lambda_j}\times\#\N_\pi$ submatrix of $\I^{\mathrm{\Pi}\mathrm{\Lambda}}_{\pi,\lambda}$ with incidences between }\db\\
&&&\T{$\pi$-planes and $\lambda_j$-lines}.
\end{align*}

\begin{theorem}\label{th2_Hirs}
\emph{\cite[Chapter 21]{Hirs_PG3q}} The following properties of the twisted cubic $\C$ of \eqref{eq2_cubic} hold:
  \begin{align}
  &\T{(i)}\T{ The group $G_q$ acts triply transitively on }\C.\dbn\\
  & \T{Also, }G_q\cong PGL(2,q)~\T{for }q\ge5;\dbn \\
 &\phantom{\T{Also, }} G_4\cong\mathbf{S}_5\cong P\mathrm{\Gamma} L(2,4)\cong\mathbf{Z}_2PGL(2,4);~ G_3\cong\mathbf{S}_4\mathbf{Z}_2^3;~
  G_2\cong\mathbf{S}_3\mathbf{Z}_2^3.\dbn\\
  &\T{ The matrix $\MM$ corresponding to a projectivity of $G_q$ has the general form}\dbn\\
& \label{eq2_M} \mathbf{M}=\left[
 \begin{array}{cccc}
 a^3&a^2c&ac^2&c^3\\
 3a^2b&a^2d+2abc&bc^2+2acd&3c^2d\\
 3ab^2&b^2c+2abd&ad^2+2bcd&3cd^2\\
 b^3&b^2d&bd^2&d^3
 \end{array}
  \right],~a,b,c,d\in\F_q,~ ad-bc\ne0.
\end{align}

(ii) (a) Under $G_q$, $q\ge5$, there are the following five orbits $\N_j$ of planes:
\begin{align}\label{eq2_plane orbit_gen}
   &\N_1=\N_\mathrm{\Gamma}=\{\mathrm{\Gamma}\T{-planes}\},~~~\#\N_1=\#\N_\mathrm{\Gamma}=q+1;\db\\
   &\N_{2}=\N_{2_\C}=\{2_\C\T{-planes}\},~\#\N_2=\#\N_{2_\C}=q^2+q;\dbn\\
   &\N_{3}=\N_{3_\C}=\{3_\C\T{-planes}\},~\#\N_3=\#\N_{3_\C}=(q^3-q)/6;\dbn\\
 &\N_{4}=\N_{\overline{1_\C}}=\{\overline{1_\C}\T{-planes}\},~\#\N_4=\#\N_{\overline{1_\C}}=(q^3-q)/2;\dbn\\
   & \N_{5}=\N_{0_\C}=\{0_\C\T{-planes}\},~\#\N_5=\#\N_{0_\C}=(q^3-q)/3.\nt
 \end{align}

(b) For $q\not\equiv0\pmod 3$, the five orbits $\M_j^{(\ne0)}$ of points are as follows:
  \begin{align}\label{eq2_point_orbits_gen}
&\M_1^{(\ne0)}=\M_\C^{(\ne0)}=\{\C\T{-points}\},~\M_2^{(\ne0)}=\M_\Tr^{(\ne0)}=\{\Tr\T{-points}\},\db\\
&\M_3^{(\ne0)}=\M_{3_\mathrm{\Gamma}}^{(\ne0)}=\{3_\mathrm{\Gamma}\T{-points}\},~\M_4^{(\ne0)}=\M_{1_\mathrm{\Gamma}}^{(\ne0)}=\{1_\mathrm{\Gamma}\T{-points}\},\dbn\\
&~\M_5^{(\ne0)}=\M_{0_\mathrm{\Gamma}}^{(\ne0)}=\{0_\mathrm{\Gamma}\T{-points}\};~\#\M_j^{(\ne0)}=\#\N_j,~j=1,\ldots,5.\dbn\\
\label{eq2_=1_orbit_point}
 &\T{For } q\equiv1\pmod 3,~ \M_{3_\mathrm{\Gamma}}^{(1)}\cup\M_{0_\mathrm{\Gamma}}^{(1)}=\{\RC\T{-points}\}, ~ \M_{1_\mathrm{\Gamma}}^{(1)}=\{\IC\T{-points}\};\db\\
 &\T{for } q\equiv-1\pmod 3,~\M_{3_\mathrm{\Gamma}}^{(-1)}\cup\M_{0_\mathrm{\Gamma}}^{(-1)}=\{\IC\T{-points}\},~
 \M_{1_\mathrm{\Gamma}}^{(-1)}=\{\RC\T{-points}\}.\nt
\end{align}

(c) For $q\equiv0\pmod 3$, the five orbits $\M_j^{(0)}$ of points are as follows:
\begin{align}\label{eq2_=0_orbit_point}
&\M_1^{(0)}=\M_\C^{(0)}=\{\C\T{-points}\},~\M_2^{(0)}=\M_{(q+1)_\mathrm{\Gamma}}^{(0)}=\{(q+1)_\mathrm{\Gamma}\T{-points}\},\db\\
&~\#\M_1^{(0)}=\#\M_\C^{(0)}=\#\M_2^{(0)}=\#\M_{(q+1)_\mathrm{\Gamma}}^{(0)}=q+1;\dbn\\
&~\M_3^{(0)}=\M_\TO^{(0)}=\{\TO\T{-points}\},~\#\M_3^{(0)}=\#\M_\TO^{(0)}=q^2-1;\dbn\\
&\M_4^{(0)}=\M_\RC^{(0)}=\{\RC\T{-points}\},~M_5^{(0)}=\M_\IC^{(0)}=\{\IC\T{-points}\},\dbn\\
&\#\M_4^{(0)}=\#\M_\RC^{(0)}=\#\M_5^{(0)}=\#\M_\IC^{(0)}=(q^3-q)/2.\nt
\end{align}

(iii) Let $q\not\equiv0\pmod3$. The null polarity $\A$ \eqref{eq2_null_pol} interchanges $\C$ and $\mathrm{\Gamma}$ and their corresponding chords and axes. We have
 \begin{align}\label{eq2:MiU=Ni}
&  \M_j^{(\ne0)}\A=\N_j,~j=1,\ldots,5; ~\M_\C^{(\ne0)}\A=\N_\mathrm{\Gamma},~\M_\Tr^{(\ne0)}\A=\N_{2_\C},\db\\
&\M_{3_\mathrm{\Gamma}}^{(\ne0)}\A=\N_{3_\C},~\M_{1_\mathrm{\Gamma}}^{(\ne0)}\A=\N_{\overline{1_\C}},~
\M_{0_\mathrm{\Gamma}}^{(\ne0)}\A=\N_{0_\C}.\nt
 \end{align}

 (iv) For all $q$, no two chords of $\C$ meet off $\C$.
 Every point off $\C$ lies on exactly one chord of $\C$.

 (v) Let $q\not\equiv0\pmod3$. No two axes of $\mathrm{\Gamma}$ meet unless they lie in the same plane of $\mathrm{\Gamma}$.
  Every plane not in $\mathrm{\Gamma}$ contains exactly one axis of $\mathrm{\Gamma}$.
\end{theorem}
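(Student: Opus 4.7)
Since this theorem consolidates classical results from Hirschfeld's monograph, the plan is to verify each item in turn by standard arguments. For part (i), I would start by showing that any projectivity of $\PG(3,q)$ which stabilises $\C$ setwise is determined by its induced permutation of the parameter set $\F_q^+$ underlying $P(t)=\Pf(t^3,t^2,t,1)$. Because the coordinates of $P(t)$ arise from a degree-$3$ Veronese-type embedding, the stabiliser must act on $t$ as a Möbius transformation $t\mapsto(at+b)/(ct+d)$ with $ad-bc\ne 0$; substituting this into the parametrisation and collecting coefficients produces exactly the matrix $\MM$ of \eqref{eq2_M}. Triple transitivity of $G_q$ on $\C$ then transfers directly from triple transitivity of $PGL(2,q)$ on $\F_q^+$. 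For $q\ge 5$ the action on $\C$ is faithful (any projectivity of $\PG(3,q)$ fixing more than five points in general position is the identity), yielding $G_q\cong PGL(2,q)$; the isomorphisms for $q\in\{2,3,4\}$ arise from additional projectivities not of Möbius form and would be verified case by case.

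Part (ii) I would handle using the arc property: since $\C$ is a $(q+1)$-arc, any plane meets $\C$ in at most three points, giving a natural coarse partition into $0_\C,1_\C,2_\C,3_\C$ planes, and the $1_\C$ case is then split according to whether the plane is osculating. Triple transitivity from (i) promotes each of these classes to a single $G_q$-orbit, and the cardinalities in \eqref{eq2_plane orbit_gen} follow by direct counting, e.g.\ $\#\N_{3_\C}=\binom{q+1}{3}=(q^3-q)/6$, with the remaining sizes obtained by subtraction from $\theta_{3,q}$. For the point orbits (b) and (c), I would dualise through the pencil of chords, using that every point off $\C$ lies on a unique chord (this is part (iv), which I would prove first), and then distinguish real from imaginary chords by whether the associated quadratic has roots in $\F_q$. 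The case split $q\equiv\pm 1\pmod 3$ against $q\equiv 0\pmod 3$ emerges from whether $-3$ is a square in $\F_q$ and from the coincidence of the osculating planes in a pencil in characteristic $3$.

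Parts (iii)--(v) are linked. For (iii), a direct computation from \eqref{eq2_null_pol} gives $P(t)\A=\boldsymbol{\pi}(1,-3t,3t^2,-t^3)=\pi_\T{osc}(t)$, so $\C\A=\mathrm{\Gamma}$; since $\A$ is a correlation preserving incidence, it sends chords to axes, and being $G_q$-equivariant it carries each orbit $\M_j^{(\ne0)}$ to the correspondingly indexed plane orbit $\N_j$, giving \eqref{eq2:MiU=Ni}. For (iv), if two distinct chords through pairs $\{P(s_1),P(s_2)\}$ and $\{P(s_3),P(s_4)\}$ shared a point off $\C$, they would span a plane containing four distinct points of $\C$, contradicting the arc property; imaginary chords are handled by passing to the quadratic extension $\F_{q^2}$ and repeating the same argument. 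Part (v) is then simply the image of (iv) under the incidence-reversing $\A$, using that $\A$ interchanges chords and axes.

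The main obstacle I expect is bookkeeping rather than a single hard step: the exceptional group identifications in (i) for small $q$ require a genuine case-by-case enumeration of stabilisers, and the point-orbit classification in (ii)(b)(c) must be formulated separately for $\xi=-1,0,1$ because the behaviour of the osculating developable (a pencil in characteristic $3$) and of the discriminant governing real versus imaginary chords both depend on $\gcd(q,3)$ and on the parity of $q$. Unifying the counts across characteristics while retaining clean orbit-representative descriptions is the technically finicky part, which is precisely why the statement is split into the three cases displayed in \eqref{eq2_point_orbits_gen}--\eqref{eq2_=0_orbit_point}.
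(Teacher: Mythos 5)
The paper does not actually prove this statement: Theorem~\ref{th2_Hirs} is a preliminary that is attributed wholesale to \cite[Chapter 21]{Hirs_PG3q}, so there is no in-paper argument to compare against. Your outline follows the classical route of that reference (Möbius substitution giving the matrix \eqref{eq2_M}, the arc property bounding plane sections, the null polarity exchanging chords and axes, and the coplanarity argument for (iv)), and parts (iii)--(v) of your sketch are essentially complete. However, judged as a proof, part (ii) has a concrete gap in two places.

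First, triple transitivity of $G_q$ on $\C$ does \emph{not} ``promote each of these classes to a single $G_q$-orbit.'' It does so for $3_\C$-planes (and, with a small extra argument about the stabilizer of a pair, for $2_\C$-planes), but a $\overline{1_\C}$-plane meets $\C$ in one point and a $0_\C$-plane in none, so the action on triples of points of $\C$ gives no control over these classes; transitivity there requires a separate argument (stabilizer/orbit counting or the explicit computations of \cite[Chapter 21]{Hirs_PG3q}). Second, classifying the points off $\C$ by the type of the unique chord through them (your ``dualise through the pencil of chords'') yields only three classes off the curve --- tangent, real-chord, imaginary-chord points --- whereas there are four point orbits off $\C$: by \eqref{eq2_=1_orbit_point} one of the chord classes is the union of the two orbits $\M_{3_\mathrm{\Gamma}}$ and $\M_{0_\mathrm{\Gamma}}$ (for $\xi\ne0$), and for $\xi=0$ the tangent points split into $(q+1)_\mathrm{\Gamma}$- and $\TO$-points. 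So the chord classification is strictly coarser than the orbit partition; you need the count of osculating planes through a point (equivalently the dual axis structure) to refine it, and you must then still prove each refined class is a single orbit. A smaller gap of the same kind occurs in (i): faithfulness of the action on $\C$ gives $G_q\hookrightarrow\mathrm{Sym}(\C)$ and the Möbius substitution gives $PGL(2,q)\le G_q$, but the isomorphism $G_q\cong PGL(2,q)$ for $q\ge5$ also requires showing that no permutation of $\C$ outside $PGL(2,q)$ extends to a projectivity, e.g.\ that the stabilizer in $G_q$ of three points of $\C$ is trivial.
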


\subsection{Orbits of lines under the stabilizer group $G_q$ of the twisted cubic}
\begin{theorem} \label{th2:MAGMA}
\emph{\cite[Section 8]{DMP_OrbLine}}
Let $q\equiv\xi\pmod3$, $\xi\in\{1,-1,0\}$.
\begin{description}
  \item[(i)] Let $5\le q\le 37$ and $q=64$. Then

$\mathbf{(a)}$  For the total number $L_{\EnG\mathrm{\Sigma}}^{(\xi)\bullet}$ of orbits of $\EnG$-lines we have
\begin{align}\label{eq2:L_EnG}
&L_{\EnG\mathrm{\Sigma}}^{(\xi)\od}=2q-3+\xi,~L_{\EnG\mathrm{\Sigma}}^{(\xi)\ev}=2q-2+\xi.
    \end{align}
    $\mathbf{(b)}$ The total number of line orbits in $\PG(3,q)$ is $2q+7+\xi$.

  \item[(ii)] Let $q$ be odd, $5\le q\le 37$.
   Then under $G_q$, for $\EnG$-lines,  there are
   \begin{align*}
 &  (q-\xi)/3&&\T{ orbits of length }&q^3-q,\db\\
  &  q-1&&\T{ orbits of length }&(q^3-q)/2,\db\\
   &    n_q^{(\xi)} &&\T{ orbits of length }& (q^3-q)/4,
   \end{align*}
   where $n_q^{(1)}=(2q-11)/3,~n_q^{(-1)}=(2q-10)/3,~
    n_q^{(0)}=(2q-6)/3$.\\
     In addition, for $q\in\{7,13,19,25,31,37\}$ where $q\equiv1\pmod3$, there are

       one orbit of length $(q^3-q)/12$ and  two orbits of length $(q^3-q)/3$.

  \item[(iii)] Let $q=8,16,32,64$. Then under $G_q$, for $\EnG$-lines, there are

     $2+\xi$  orbits of length $(q^3-q)/(2+\xi)$ and     $2q-4$ orbits of length $(q^3-q)/2$.
\end{description}
\end{theorem}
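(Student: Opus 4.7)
The plan is to enumerate the orbits of $\EnG$-lines directly under the $G_q$-action given by the matrices $\mathbf{M}$ of \eqref{eq2_M}. Since $G_q \cong PGL(2,q)$ for $q \geq 5$, one has $|G_q| = q^3 - q$, so by the orbit-stabilizer theorem every orbit has length $(q^3-q)/|\mathrm{Stab}_{G_q}(\ell)|$ for a representative line $\ell$. The orbit lengths appearing in the statement, namely $(q^3-q)/d$ with $d \in \{1,2,3,4,12\}$ in the odd case and $d \in \{1,2,3\}$ in the even case, correspond exactly to stabilizers of those orders inside $PGL(2,q)$. In particular, for $d = 12$ the stabilizer must be a copy of $\mathbf{A}_4$, and by Dickson's classification of subgroups of $PGL(2,q)$ such a subgroup exists only when $q \equiv \pm 1 \pmod{12}$ with $q$ odd, which already singles out the exceptional list $\{7,13,19,25,31,37\}$ in part (ii).

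Concretely, I would proceed in four steps. (a) Parametrize all lines of $\PG(3,q)$, e.g.\ by Pl\"ucker coordinates or by reduced bases consisting of two points. (b) Filter to retain exactly the $\EnG$-lines, using the characterization that they contain no point of $\C$ over $\F_q$, are not imaginary chords (which can be detected by factoring the restriction of the cubic parametrization over $\F_{q^2}$), and are not contained in any $\mathrm{\Gamma}$-plane. (c) For each surviving line $\ell$, apply the matrices $\mathbf{M}$ of \eqref{eq2_M} over representatives of $PGL(2,q)$ to generate $\ell G_q$, and partition the $\EnG$-lines into orbits accordingly. (d) Tabulate orbit lengths and multiplicities. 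Consistency is checked by verifying that the weighted sum $\sum (\textrm{orbit length}) \cdot (\textrm{orbit count})$ equals the total number of $\EnG$-lines, which is $\beta_{3,q}$ minus the counts of all other line types from $\Lk^{(\xi)}$.

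The main obstacle is not conceptual but combinatorial-computational: the set of candidate lines has size $\Theta(q^4)$, the acting group has order $\Theta(q^3)$, and the orbit partition depends sensitively on $q \bmod 12$ and on the parity of $q$, with additional subtleties when $3 \mid q$ (the osculating developable degenerates to a pencil and several line types reorganize, so the whole bookkeeping of what is and is not an $\EnG$-line shifts). For this reason I would not attempt a uniform closed-form proof over the full range but instead carry out direct verification in \textsc{Magma} for each individual $q \in \{5,\ldots,37\}$ and for $q = 64$; the closed expressions for $n_q^{(\xi)}$ and for $L^{(\xi)\bullet}_{\EnG\mathrm{\Sigma}}$ in part (i) are then recovered by fitting the tabulated data, and their internal consistency with parts (ii) and (iii) follows by summing orbit lengths and comparing with the number of $\EnG$-lines.
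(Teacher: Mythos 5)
Your proposal takes essentially the same route as the source of this statement: the theorem is imported verbatim from \cite[Section~8]{DMP_OrbLine}, where it is established precisely by an exhaustive Magma computation of the $G_q$-orbits of $\EnG$-lines for each listed $q$ (with orbit lengths read off via orbit--stabilizer in $PGL(2,q)$), and the general-$q$ statement is deliberately left as Conjecture~\ref{conj2:orbEnG}. One caveat on a non-load-bearing side remark: your claim that Dickson's classification via $q\equiv\pm1\pmod{12}$ ``singles out'' the exceptional list $\{7,13,19,25,31,37\}$ is not right --- e.g.\ $19\equiv7\pmod{12}$ yet is in the list, while $11\equiv-1\pmod{12}$ is not --- the relevant condition is simply odd $q\equiv1\pmod3$, and in any case the computational enumeration does not rely on this.
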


\begin{conjecture} \label{conj2:orbEnG} \cite{DMP_OrbLine}
The results of Theorem \ref{th2:MAGMA} hold for all $q\ge5$ with the corresponding parity and $\xi$ value.
\end{conjecture}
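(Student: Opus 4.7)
The plan is to replace the MAGMA computation of Theorem~\ref{th2:MAGMA} by a structural argument based on the orbit--stabilizer theorem. Since $G_q\cong PGL(2,q)$ for $q\ge 5$, we have $|G_q|=q^3-q$, so the orbit lengths appearing in the conjecture---$q^3-q$, $(q^3-q)/2$, $(q^3-q)/3$, $(q^3-q)/4$, and $(q^3-q)/12$---correspond exactly to setwise stabilizers of orders $1,2,3,4,12$. The task reduces to classifying, up to $G_q$-conjugacy, those subgroups of $G_q$ which arise as stabilizers of $\EnG$-lines, and to counting the corresponding orbits.

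First I would invoke Dickson's classification of subgroups of $PGL(2,q)$, from which the candidate stabilizers of orders $2,3,4$ are cyclic or Klein four, while the only possibility for order $12$ is $A_4$. For each candidate subgroup $H$ I would compute its setwise-fixed lines in $\PG(3,q)$ via the four-dimensional representation \eqref{eq2_M}, by intersecting the eigenspace decompositions of a chosen set of generators; the result is a finite collection of lines parametrized rationally in the entries $a,b,c,d$ of \eqref{eq2_M}. I would then discard any fixed line that is a chord, tangent, axis, unisecant, contained in a $\mathrm{\Gamma}$-plane, or (for $\xi=0$) meets the pencil axis, keeping only $\EnG$-representatives. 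By orbit--stabilizer each surviving representative produces an orbit of size $|G_q|/|H|$ provided $H$ is the \emph{full} stabilizer, so a normalizer analysis via $N_{G_q}(H)/H$ is required to avoid identifying two orbits coming from conjugate copies of $H$. A global sanity check is obtained by comparing the weighted sum of orbit lengths against the cardinality $\#\OO_{\EnG}=\beta_{3,q}-\sum_{\lambda\ne\EnG}\#\OO_\lambda$, whose summands are available from the references cited in Section~\ref{sec_prelimin}.

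The main obstacle lies in the exceptional orbits of lengths $(q^3-q)/12$ and $(q^3-q)/3$ that occur only for $\xi=1$, $q\ge 7$. A stabilizer of order $12$ forces $H\cong A_4$, whose embeddings into $PGL(2,q)$ are constrained by arithmetic conditions on $q$; one must show that exactly one orbit of $A_4$-stabilized $\EnG$-lines exists whenever $q\equiv 1\pmod 3$ and $q\ge 7$, and that no such orbit arises for $\xi=-1$ or $\xi=0$ even in cases where $A_4$ does embed in $G_q$. Similarly, the two orbits of length $(q^3-q)/3$ would require distinguishing two $G_q$-inequivalent cyclic subgroups of order $3$, with split and non-split tori behaving differently when $\xi=1$, and verifying that each fixes precisely one $\EnG$-line up to the $G_q$-action. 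This is a delicate invariant-theoretic problem, not reducible to a finite computer check, and extending the stabilizer enumeration uniformly to all $q$ with the stated $\xi$ is exactly why the statement remains a conjecture rather than a theorem. A successful proof would additionally have to handle the even-$q$ counts \eqref{eq2:L_EnG} within the same framework, exploiting the fact that in $PGL(2,2^h)$ element orders are tightly constrained by characteristic $2$ and hence no stabilizer of order $4$ or $12$ can occur.
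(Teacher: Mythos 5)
The statement you were given is not a theorem of the paper but an open conjecture: the authors supply no proof, only the Magma verification of Theorem \ref{th2:MAGMA} for $5\le q\le 37$ and $q=64$, and the introduction states explicitly that the classification of the line orbits in the class $\OO_6$ is an open problem. So there is no proof in the paper to compare yours against; a complete argument here would be new mathematics, not a reconstruction.

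Your proposal is a reasonable research plan (stabilizer classification via Dickson's theorem and orbit--stabilizer is indeed the natural line of attack, and is close in spirit to how the other classes are handled in the cited references), but it is not a proof, and you concede as much in your own text (``one must show\dots'', ``would require distinguishing\dots'', ``a successful proof would additionally have to\dots''). The concrete gaps are these. First, you never actually compute the fixed-line sets of the candidate subgroups under the representation \eqref{eq2_M}, nor verify which of those fixed lines are $\EnG$-lines; everything after ``I would compute'' is unexecuted. Second, you never exclude stabilizer orders other than $1,2,3,4,12$: the orbit-length list in Theorem \ref{th2:MAGMA} implicitly asserts that no $\EnG$-line has, say, a dihedral stabilizer of order $6$ or $8$ or a cyclic stabilizer of order $5$, and that assertion is precisely part of what must be proved. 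Third, the normalizer and fusion analysis needed to pass from ``conjugacy classes of subgroups'' to ``number of orbits with that stabilizer'' is only named, not performed. Fourth, your closing claim about even $q$ is wrong as stated: for $q=2^h$ one has $PGL(2,q)=PSL(2,q)$ with elementary abelian Sylow $2$-subgroups of order $2^h$, so Klein four subgroups exist for all $h\ge2$, and $A_4$ embeds for even $h$ (e.g.\ $q=16,64$, both $\equiv1\pmod3$); yet Theorem \ref{th2:MAGMA}(iii) reports no orbits of length $(q^3-q)/4$ or $(q^3-q)/12$ for these $q$. Hence the absence of such orbits cannot be deduced from constraints on element orders in characteristic $2$; one must show that these subgroups fix no $\EnG$-line, which is again the hard, unexecuted step. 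As written, the proposal establishes nothing beyond what the conjecture already asserts.
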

For odd $q\not\equiv0\pmod3$, the conjecture on \eqref{eq2:L_EnG} is given also in \cite{GulLav}.

The unions (classes) of line orbits are considered in \cite[Chapter 21]{Hirs_PG3q}; they are called $\OO_i$ and $\OO'_i=\OO_i\A$. In \cite{DMP_OrbLine} (for all $q\ge2$), \cite{BlokPelSzo} (for all $q\ge23$), and \cite{GulLav} (for odd $q\not\equiv0\pmod3$), these classes (apart from $\OO_6$) are investigated; the sizes and the structures of the orbits forming each class are obtained.

Theorem~\ref{th2:orbLine} and Table \ref{tab1} summarize some results from \cite{BlokPelSzo,DMP_OrbLine,GulLav,Hirs_PG3q} useful in this paper.

  \begin{table}[h]
\caption{Unions (classes) $O_i$ and $O'_i=\OO_i\A$ of line orbits under $G_q$ in $\PG(3,q)$, $q\equiv\xi\pmod3$, $q\ge5$.
$\OO_i=\OO'_i, ~i=2,4,6$. $L_{\lambda\mathrm{\Sigma}}^{(\xi)\bullet}$ is the total number of orbits in the class $\OO_\lambda$. $\#\OO_{\lambda_j}$ is the size of the $j$-th orbit of a class $\OO_\lambda$ consisting of 2 or 3 orbits}
\label{tab1}
\centering
\begin{tabular}{llcccccl}\hline\noalign{\smallskip}
&&content&&&&&$\#\OO_{\lambda_1}$\\
$\OO_i$&&of the&size of&&&&$\#\OO_{\lambda_2}$\\
$\OO'_i$&$\OO_\lambda$&class&the class&$\xi$&$L_{\lambda\mathrm{\Sigma}}^{(\xi)\od}$&$L_{\lambda\mathrm{\Sigma}}^{(\xi)\ev}$&$\#\OO_{\lambda_3}$
\\\noalign{\smallskip}\hline\noalign{\smallskip}
$\OO_1$&$\OO_\RC$&$\RC$-lines&$(q^2+q)/2$&any&$1$&$1$&\\
$\OO'_1$&$\OO_\RA$&$\RA$-lines&$(q^2+q)/2$&$\ne0$&$1$&$1$&\\
$\OO_2$&$\OO_\Tr$&$\Tr$-lines&$q+1$&any&$1$&$1$&\\
$\OO_3$&$\OO_\IC$&$\IC$-lines&$(q^2-q)/2$&any&$1$&$1$&\\
$\OO'_3$&$\OO_\IA$&$\IA$-lines&$(q^2-q)/2$&$\ne0$&$1$&$1$&\\
$\OO_4$&$\OO_\UG$&$\UG$-lines&$q^2+q$&any&$1$&$2$&$q+1$\\
&&&&&&&$q^2-1$\\
$\OO_5$&$\OO_\UnG$&$\UnG$-lines&$q^3-q$&any&$2$&$1$&$(q^3-q)/2$\\
&&&&&&&$(q^3-q)/2$\\
$\OO'_5$&$\OO_\EG$&$\EG$-lines&$q^3-q$&$\ne0$&$2$&$1$&$(q^3-q)/2$\\
&&&&&&&$(q^3-q)/2$\\
$\OO_6$&$\OO_\EnG$&$\EnG$-lines&$(q^2-q)(q^2-1)$&any&$L_{\EnG\mathrm{\Sigma}}^{(\xi)\od}$&$L_{\EnG\mathrm{\Sigma}}^{(\xi)\ev}$&\\
$\OO_7$&$\OO_\Ar$&$\Ar$-line&$1$&$0$&$1$&--&\\
$\OO_8$&$\OO_\EA$&$\EA$-lines&$(q+1)(q^2-1)$&$0$&$3$&--&$q^3-q$\\
&&&&&&&$(q^2-1)/2$\\
&&&&&&&$(q^2-1)/2$\\\noalign{\smallskip}\hline
\end{tabular}
\end{table}
In the last column of Table \ref{tab1}, the sizes of the orbits $\OO_{\lambda_j}$ of a class $\OO_\lambda$ consisting of 2 or 3 orbits are given from top to bottom,
e.g. for $\OO_\EA$ we have $\OO_{\EA_1}=q^3-q$, $\OO_{\EA_2}=(q^2-1)/2$, $\OO_{\EA_3}=(q^2-1)/2$.

\begin{theorem}\label{th2:orbLine}  \emph{\cite{BlokPelSzo,DMP_OrbLine,GulLav,Hirs_PG3q}} Let $q\ge5$. The lines of $\PG(3,q)$ can be partitioned into classes called $\OO_i$ and $\OO'_i$, each of which is a union of orbits under $G_q$. The classification of the unions \emph{(}classes\emph{)} of line orbits is given in Table \emph{\ref{tab1}}.

If $q\not\equiv0\pmod3$ we have
\begin{align}
  &\OO'_i=\OO_i\A,~\#\OO'_i=\#\OO_i,~i=1,\ldots,6;~\OO_i=\OO'_i, ~i=2,4,6;\label{eq2:O'=OU}\\
  &\OO_\RA=\OO_\RC\A,~\OO_\IA=\OO_\IC\A,~\OO_\EG=\OO_\UnG\A,~\OO_\lambda=\OO_\lambda\A,\,\lambda\in\{\Tr,\UG,\EnG\}.\nt
     \end{align}
\end{theorem}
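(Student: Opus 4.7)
The plan is to follow Hirschfeld's geometric partition of the lines of $\PG(3,q)$ in Chapter 21 of \cite{Hirs_PG3q} and then refine each class $\OO_\lambda$ into $G_q$-orbits, using that $G_q\cong PGL(2,q)$ for $q\ge5$ acts sharply $3$-transitively on $\C$ (Theorem \ref{th2_Hirs}(i)). First I would verify that each class is $G_q$-invariant: the defining properties (being a chord of $\C$, an axis of $\mathrm{\Gamma}$, a tangent, a unisecant in a $\mathrm{\Gamma}$-plane, etc.) are preserved by $G_q$ because $G_q$ fixes both $\C$ and $\mathrm{\Gamma}$. The class sizes are then obtained by direct counting: $\#\OO_\RC=\binom{q+1}{2}$ counts unordered pairs of $\C$-points, $\#\OO_\IC=(q^2-q)/2$ counts Galois-conjugate pairs in $\C(\F_{q^2})\setminus\C(\F_q)$, $\#\OO_\Tr=q+1$, and $\#\OO_\UG=q(q+1)$ since each osculating plane $\pi_\T{osc}(t)$ meets $\C$ only at $P(t)$ and therefore carries exactly $q$ non-tangent unisecants through $P(t)$. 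The sizes of $\OO_\UnG,\OO_\EG,\OO_\EnG$ then follow from the identity $\sum_i\#\OO_i=\beta_{3,q}$ together with the total number of unisecants through a $\C$-point. For $\xi=0$ the pencil structure of $\mathrm{\Gamma}$ produces the extra classes $\OO_\Ar$ (the unique axis of the pencil) and $\OO_\EA$, whose size $(q+1)(q^2-1)$ comes from counting external lines meeting $\Ar$.

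The orbit decomposition inside each class is then obtained from the $PGL(2,q)$-action on $\C$ via stabilizer computations. The single-orbit claims for $\OO_\RC,\OO_\Tr,\OO_\IC$ (and by the polarity for $\OO_\RA,\OO_\IA$) follow from $2$-transitivity of $G_q$ on $\C(\F_q)$ and on Galois-conjugate pairs of $\C(\F_{q^2})\setminus\C(\F_q)$. For $\OO_\UG$, the stabilizer of $P(t)$ in $G_q$ is a Borel subgroup of order $q(q-1)$; a direct computation with the matrix \eqref{eq2_M} shows this stabilizer is transitive on the $q$ non-tangent unisecants through $P(t)$ in $\pi_\T{osc}(t)$ when $q$ is odd, giving one global orbit of size $q(q+1)$, whereas for $q$ even it fixes a single distinguished unisecant and acts transitively on the remaining $q-1$, giving global orbits of sizes $q+1$ and $q^2-1$. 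The classes $\OO_\UnG$ and $\OO_\EG$ are handled analogously on convenient representatives; the splitting into two orbits of size $(q^3-q)/2$ each is forced by orbit-length divisibility together with a single $G_q$-invariant distinguishing the two orbits. For $\xi=0$ the three orbits inside $\OO_\EA$ are separated by further incidence data with $\C$ and the pencil $\mathrm{\Gamma}$, with sizes dictated by stabilizer order.

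Finally, for $q\not\equiv0\pmod3$ the null polarity $\A$ of \eqref{eq2_null_pol} is $G_q$-equivariant on points and planes and hence induces a $G_q$-equivariant bijection on lines; it therefore permutes $G_q$-orbits of lines. By Theorem \ref{th2_Hirs}(iii) it swaps $\C\leftrightarrow\mathrm{\Gamma}$ and correspondingly chords with axes, which forces $\OO_\RA=\OO_\RC\A$, $\OO_\IA=\OO_\IC\A$, $\OO_\EG=\OO_\UnG\A$, while $\OO_\Tr,\OO_\UG,\OO_\EnG$ are each $\A$-stable. The main obstacle is the class $\OO_\EnG$: stabilizers of $\EnG$-lines are small and their orders depend subtly on arithmetic properties of $q$, so the theorem records only the total orbit number $L_{\EnG\mathrm{\Sigma}}^{(\xi)\bullet}$. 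Its explicit value for $5\le q\le 37$ and $q=64$ in Theorem \ref{th2:MAGMA} relies on computer algebra, and the general case remains open as Conjecture \ref{conj2:orbEnG}.
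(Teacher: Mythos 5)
This theorem is stated in the paper purely as a summary of results from \cite{BlokPelSzo,DMP_OrbLine,GulLav,Hirs_PG3q}; the paper gives no proof of its own, so your sketch is being measured against the arguments in those sources rather than against anything in this text. Your overall strategy --- verify $G_q$-invariance of Hirschfeld's classes, count class sizes, decompose each class into orbits by stabilizer computations, and transport half the classes through the null polarity $\A$ using Theorems \ref{th2_Hirs}(iii) and \ref{th2_null_pol} --- is indeed the route taken in \cite{DMP_OrbLine}, and the parts you do carry out (the single-orbit claims for $\OO_\RC$, $\OO_\Tr$, $\OO_\IC$ via transitivity of $PGL(2,q)$ on pairs of $\F_q$-points and on Frobenius-conjugate pairs, the counts $\#\OO_\RC=\binom{q+1}{2}$, $\#\OO_\UG=q(q+1)$, and the $\A$-equivariance argument for \eqref{eq2:O'=OU}) are correct.

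There is, however, a genuine gap in the orbit-splitting steps, which are the hard part of the theorem. You assert that the decomposition of $\OO_\UnG$ (and $\OO_\EG$) into two orbits of size $(q^3-q)/2$ for odd $q$ is ``forced by orbit-length divisibility together with a single $G_q$-invariant distinguishing the two orbits.'' Divisibility forces nothing here: a single orbit of size $q^3-q$ satisfies every divisibility constraint, and that is precisely what happens when $q$ is even (Table \ref{tab1} gives $L^{\od}_{\UnG\mathrm{\Sigma}}=2$ but $L^{\ev}_{\UnG\mathrm{\Sigma}}=1$). So the parity dependence cannot be extracted from numerics; it requires exhibiting either the invariant itself (in \cite{DMP_OrbLine} and \cite{GulLav} this amounts to a quadratic-residue condition attached to a representative unisecant, equivalently a line stabilizer of order $2$ inside the Borel subgroup, which exists only for odd $q$) or an explicit computation with the matrices \eqref{eq2_M} on a chosen representative. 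The same objection applies to the even-$q$ splitting of $\OO_\UG$ into orbits of sizes $q+1$ and $q^2-1$: you posit a ``distinguished unisecant'' fixed by the Borel subgroup without identifying it, and identifying it (it is tied to the special behaviour of osculating planes in characteristic $2$) is exactly the content of that case. Likewise the three orbits of $\OO_\EA$ for $\xi=0$, with sizes $q^3-q$, $(q^2-1)/2$, $(q^2-1)/2$, are stated without the incidence invariant that separates $\OO_{\EA_2}$ from $\OO_{\EA_3}$. As a roadmap your proposal is faithful to the literature, but as a proof it defers all of the case-splitting content to computations it does not perform, and in one place substitutes a counting argument that provably cannot decide the question.
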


In \cite[Theorem 3.2]{DMP_OrbLine}, the cases when Table \ref{tab1} holds for $q=2,3,4$ are noted.

\begin{theorem}\label{th2_null_pol} \emph{\cite[Theorem 4.3]{DMP_OrbLine}}
 Let $q\not\equiv0\pmod 3$. Let $\LL$ be an orbit of lines under~$G_q$. Then $\LL\mathfrak{A}$ also is an orbit of lines under~$G_q$.
\end{theorem}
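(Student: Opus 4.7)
The plan is to show that the null polarity $\A$ normalizes $G_q$ inside the full correlation group of $\PG(3,q)$; once this is known, $\A$ automatically sends $G_q$-orbits of lines to $G_q$-orbits. Since $\A$ is an involutory correlation, $\A^2$ acts as the identity on points, planes and lines. By Theorem~\ref{th2_Hirs}(iii), $\A$ interchanges $\C$ (as a set of points) with $\mathrm{\Gamma}$ (as a set of planes). Moreover, every $g\in G_q$ permutes the points of $\C$ and, since $\mathrm{\Gamma}$ is canonically determined by $\C$ (concretely, $g$ permutes the osculating planes $\pi_{\mathrm{osc}}(t)$ of \eqref{eq2_osc_plane} in the same way it permutes the points $P(t)$), it also permutes the planes of $\mathrm{\Gamma}$.

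The main step is to verify that $\A g \A \in G_q$ for every $g\in G_q$. As a composition of two correlations and one collineation, $\A g \A$ is a collineation; its action on $\C$ is
\begin{equation*}
\C\,\A\, g\, \A \;=\; \mathrm{\Gamma}\, g\, \A \;=\; \mathrm{\Gamma}\, \A \;=\; \C,
\end{equation*}
so $\A g \A$ fixes $\C$ setwise and hence lies in $G_q$. Therefore $\A G_q \A = G_q$.

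With this normalizer property in hand, let $\LL$ be a $G_q$-orbit of lines. For any $\ell\in\LL$ and any $h\in G_q$, setting $g=\A h\A\in G_q$ and using $\A^2=\mathrm{id}$ yields
\begin{equation*}
(\ell\A)h \;=\; \ell\,\A\, h \;=\; \ell\,(\A h\A)\,\A \;=\; (\ell g)\,\A \;\in\;\LL\A,
\end{equation*}
so $\LL\A$ is $G_q$-invariant. Transitivity of the $G_q$-action on $\LL\A$ is the same identity read in the opposite direction: given $\ell_1,\ell_2\in\LL$ with $\ell_1 g=\ell_2$ for some $g\in G_q$, the element $g'=\A g\A\in G_q$ satisfies $(\ell_1\A)g'=\ell_2\A$. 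Hence $\LL\A$ is a single $G_q$-orbit, as required. The only delicate point is the bookkeeping of when a composition of correlations and collineations is itself a collineation; no geometric input beyond the interchange property of $\A$ already recorded in Theorem~\ref{th2_Hirs}(iii) is needed.
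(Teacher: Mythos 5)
This statement is quoted in the paper from \cite[Theorem 4.3]{DMP_OrbLine} without an accompanying proof, so there is no in-paper argument to compare against; your proposal must be judged on its own. Your argument is correct and is the natural one: $\A$ is an involutory correlation interchanging $\C$ and $\mathrm{\Gamma}$, so conjugation by $\A$ sends a collineation fixing $\C$ to a collineation fixing $\C$, i.e.\ $\A$ normalizes $G_q$, and a normalizing involution carries orbits to orbits. Two small points are worth making explicit. First, $G_q$ is defined as a group of \emph{projectivities}, so after showing that $\A g\A$ is a collineation fixing $\C$ you still need it to be linear; this is immediate because $\A$ in \eqref{eq2_null_pol} is a projective (matrix-induced) correlation and $g$ is a projectivity, so the conjugate is again induced by a matrix, but the sentence ``fixes $\C$ setwise and hence lies in $G_q$'' silently skips this. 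Second, you use without comment that a correlation of $\PG(3,q)$ permutes the set of lines (sending the span of two points to the intersection of their polar planes), which is what makes $\LL\A$ a set of lines in the first place; it is standard, but since the whole theorem is about the action of $\A$ on lines it deserves a clause. With those two remarks added, the proof is complete.
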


In \cite{GulLav}, the line orbits are denoted by $\LL_i$ and $\LL^\bot_i=\LL_i\A$, $i=1,\ldots,10$. We give the correspondence between $\LL_i$ and the notations of this paper (in \cite{DMP_OrbLine,DMP_PlLineInc} the notations are the same as in this paper).
\begin{align}\label{eq2:L_Lav}
&\LL_1=\OO'_1=\OO_\RA,~\LL_2=\OO_2=\OO'_2=\OO_\Tr,~\LL_3=\OO_4=\OO'_1=\OO_\UG,\db\\
&\LL_4=\OO'_{5_2}=\OO_{\EG_2},~\LL_5=\OO'_{5_1}=\OO_{\EG_1},~\LL_6=\LL^\bot_1=\OO_1=\OO_\RC,\dbn\\
&\LL_7=\LL^\bot_4=\OO_{5_2}=\OO_{\UnG_2},~
\LL_8=\LL^\bot_5=\OO_{5_1}=\OO_{\UnG_1},~\LL_9=\OO_3=\OO_\IC,\dbn\\
&\LL_{10}=\LL^\bot_9=\OO'_3=\OO_\IA.\nt
\end{align}

\subsection{The plane-line incidence matrix  of $\PG(3,q)$}\label{subsec_incid}
The $\beta_{3,q}\times\theta_{3,q}$ plane-line incidence matrix  $\I^{\mathrm{\Pi}\mathrm{\Lambda}}$ of $\PG(3,q)$ is considered in \cite{DMP_PlLineInc,GulLav}.

In \cite{DMP_PlLineInc}, all $q\ge2$ are considered, including even $q$ and $q\equiv0\pmod3$,
 see \cite[Section 3]{DMP_PlLineInc}, where the results of the paper are summarized.
 In $\I^{\mathrm{\Pi}\mathrm{\Lambda}}$,  columns correspond to planes, rows correspond to lines, and there is an entry  ``1'' if the corresponding line lies in the corresponding plane.  In \cite{DMP_PlLineInc}, $\I^{\mathrm{\Pi}\mathrm{\Lambda}}$ is partitioned into $\#\OO_\lambda\times\#\N_\pi$ submatrices $\I_{\pi,\lambda}^{\mathrm{\Pi}\mathrm{\Lambda}}$, $\lambda\in\Lk^{(\xi)}$, $\pi\in\Pk$. If $L_{\lambda\mathrm{\Sigma}}^{(\xi)\bullet}>1$, see Table~\ref{tab1}, then $\I_{\pi,\lambda}^{\mathrm{\Pi}\mathrm{\Lambda}}$ splits into $L_{\lambda\mathrm{\Sigma}}^{(\xi)\bullet}$ submatrices
$\I_{\pi,\lambda_j}^{\mathrm{\Pi}\mathrm{\Lambda}}$, $j=1,\ldots,L_{\lambda\mathrm{\Sigma}}^{(\xi)\bullet}$.

The values of $\mathrm{\Pi}_{\pi,\lambda}^{(\xi)\bullet}$, $\mathrm{\Lambda}_{\lambda,\pi}^{(\xi)\bullet}$ for all $L_{\lambda\mathrm{\Sigma}}^{(\xi)\bullet}$ and $\mathrm{\Pi}_{\pi,\lambda_j}^{(\xi)\bullet}$, $\mathrm{\Lambda}_{\lambda_j,\pi}^{(\xi)\bullet}$ for $L_{\lambda\mathrm{\Sigma}}^{(\xi)\bullet}=2,3$ are obtained in \cite{DMP_PlLineInc}
and collected in \cite[Tables 1, 2]{DMP_PlLineInc}. For the class $\OO_6$, the values of $\mathrm{\Pi}_{\pi,\lambda}^{(\xi)\bullet}$ are average over all $\EnG$-lines.

In \cite{GulLav}, for odd $q\not\equiv0\pmod3$, the ten orbits $\LL_i$, see \eqref{eq2:L_Lav}, are considered and the corresponding values $\mathrm{\Pi}_{\pi,\lambda}^{(\xi)\od}$ are obtained (they are denoted by $OD_2(\ell)$ and are called ``the plane orbit distribution of a line $\ell$"). These results of \cite{GulLav} are in accordance with the ones of \cite{DMP_PlLineInc}.

\subsection{The point-line incidence matrix  of $\PG(3,q)$}\label{subsec_incid}

\textbf{Notation 2}
According to Notation 1, let $\pk$ and $\lambda$ be the type of a point and of a line and let $\Mk^{(\xi)}$ and $\Lk^{(\xi)}$ be the lists of the possible types. By default,  $\pk\in\Mk^{(\xi)}$, $\lambda\in\Lk^{(\xi)}$. A $\pk$-point is a point of the orbit $\M_\pk$; a $\lambda$-line is a line of the class $\OO_\lambda$.
In addition to Notation 1, the following notation is used:
\begin{align*}
&\Lb_{\lambda_j,\pk}^{(\xi)\bullet}&&\T{the number of lines from an orbit $\OO_{\lambda_j}$ through a $\pk$-point};\db\\
&\Lb_{\lambda,\pk}^{(\xi)\bullet}&&\T{the total number of $\lambda$-lines through a $\pk$-point};\db\\
&\Pb_{\pk,\lambda_j}^{(\xi)\bullet} &&\T{the number of $\pk$-points on a line of an orbit }\OO_{\lambda_j};\db\\
&\Pb_{\pk,\lambda}^{(\xi)\bullet}&&\T{the average number of $\pk$-points on a $\lambda$-line over all the $\lambda$-lines};\db\\
&&&\T{if the class $\OO_\lambda$ consists of \emph{a single orbit} then }\Pb_{\pk,\lambda}^{(\xi)}\T{ is \emph{the exact number}}\db\\
&&&\T{of $\pk$-points on each $\lambda$-line};\db\\
&\I^{\Pb\Lb}&&\T{the $\beta_{3,q}\times\theta_{3,q}$ point-line incidence matrix of }\PG(3,q);\db\\
&\I_{\pk,\lambda}^{\Pb\Lb}&&\T{the $\#\OO_\lambda\times\#\M_\pk$ submatrix of $\I^{\Pb\Lb}$ with incidences between }\db\\
&&&\T{$\pk$-points and $\lambda$-lines}; \db\\
&\I_{\pk,\lambda_j}^{\Pb\Lb}&&\T{the $\#\OO_{\lambda_j}\times\#\M_\pk$ submatrix of $\I_{\pk,\lambda}^{\Pb\Lb}$ with incidences between }\db\\
&&&\T{$\pk$-points and $\lambda_j$-lines.}
 \end{align*}

In  $\I^{\Pb\Lb}$, columns correspond to points, rows correspond to lines, and there is an entry  ``1'' if the corresponding point lies on the corresponding line.  Every column and every row of $\I^{\Pb\Lb}$ contains $\theta_{2,q}$ and $\theta_{1,q}$ ones, respectively, as in $\PG(3,q)$, there are $\theta_{2,q}$ lines through every point and $\theta_{1,q}$ points in every line. Thus, $\I^{\Pb\Lb}$ is a tactical configuration \cite[Chapter 2.3]{Hirs_PGFF}, \cite[Chapter 7, Section~2]{Lidl_Nied}.
Moreover, $\I^{\Pb\Lb}$  gives a 2-$(\theta_{3,q},\theta_{1,q},1)$ design~\cite{HandbCombDes2v_k_lamb} since there is exactly one line through any two points.

\begin{definition}\label{def2_config}\cite{GroppConfig}
    A configuration $(v_r,b_k)$  is an incidence structure of $v$ points and $b$ lines such that
 each line contains $k$ points, each point lies on $r$ lines, and
 two different points are connected by at most one line. If $v = b$ and, hence, $r = k$, the configuration is symmetric, denoted by $v_k$.
\end{definition}
\noindent For an introduction to configurations see \cite{DFGMP_SymConf,GroppConfig} and the references therein.

The transposition $(\I^{\Pb\Lb})^{tr}$ gives the $\theta_{3,q}\times\beta_{3,q}$ line-point incidence matrix. It can be viewed as a $(v_r,b_k)$ configuration  with $v=\beta_{3,q}$, $b=\theta_{3,q}$, $r=\theta_{1,q}$, $k=\theta_{2,q}$, as there is at most one point as the intersection of two different lines.

In \cite{GulLav}, for odd $q\not\equiv0\pmod3$, the ten orbits $\LL_i$, see \eqref{eq2:L_Lav}, are considered and the corresponding values $\Pb_{\pk,\lambda}^{(\xi)\od}$, $\Pb_{\pk,\lambda_j}^{(\xi)\od}$ are obtained (they are denoted by $OD_0(\ell)$ and are called ``the point orbit distribution of a line $\ell$").

These results of \cite{GulLav} are obtained also in this paper by another way. In this process we also obtained the values of $\Pb_{\pk,\lambda}^{(\xi)\ev}$, $\Pb_{\pk,\lambda_j}^{(\xi)\ev}$ for even $q\not\equiv0\pmod3$ and the values of $\Lb_{\lambda,\pk}^{(\xi)\bullet}$, $\Lb_{\lambda_j,\pk}^{(\xi)\bullet}$ for all odd and even $q\not\equiv0\pmod3$, see the first two tables of Section \ref{sec_mainres} and Section~\ref{sec:results_q_ne0}.

Moreover, in this paper we obtained also values $\Pb_{\pk,\lambda}^{(0)\od}$, $\Pb_{\pk,\lambda_j}^{(0)\od}$, $\Lb_{\lambda,\pk}^{(0)\od}$, and $\Lb_{\lambda_j,\pk}^{(0)\od}$ for $q\equiv0\pmod3$, see the last two tables of Section \ref{sec_mainres} and Section \ref{sec:results_q=0}.

As we mentioned above, for the class $\OO_6$, in this paper only average and cumulative results are obtained.

\section{The main results}\label{sec_mainres}
\begin{remark}
We call $\Pb_{\pk,\lambda}^{(\xi)\bullet}$  \emph{the average number} of $\pk$-points on a $\lambda$-line over all the $\lambda$-lines. If the class  $\OO_\lambda$ of $\lambda$-lines consists of a single orbit, i.e. $L_{\lambda\mathrm{\Sigma}}^{(\xi)\bullet}=1$, then $\Pb_{\pk,\lambda}^{(\xi)\bullet}$  is \emph{the exact number} of $\pk$-points on each $\lambda$-line, see Lemma \ref{lemma4_line&point}. The situation is always clear by the context.
If $L_{\lambda\mathrm{\Sigma}}^{(\xi)\bullet}=1$ then $\Pb_{\pk,\lambda}^{(\xi)\bullet}$ certainly is an integer. If $\lambda$-lines form two or more orbits, i.e. $L_{\lambda\mathrm{\Sigma}}^{(\xi)\bullet}\ge2$, then $\Pb_{\pk,\lambda}^{(\xi)\bullet}$ may be not  integer as well as an integer.

 On the other hand, regardless of the number of orbits in $\OO_\lambda$, for all pairs $(\pk,\lambda)$, we always have the same total number of $\lambda$-lines through each $\pk$-point, i.e. $\mathrm{\Lambda}_{\lambda,\pk}^{(\xi)\bullet}$ always is an integer,  again see Lemma \ref{lemma4_line&point}.
\end{remark}

From now on, we consider $q\ge5$, apart from Theorem \ref{th3:q=2 3 4}. Theorem~\ref{th3:q=2 3 4} is obtained by an  exhaustive computer search using the computer algebra system Magma \cite{Magma}.

Tables \ref{tab2}--\ref{tab5} and Theorem \ref{th3_main_res} summarize the results of  Sections \ref{sec:useful}--\ref{sec:gen res} for $q\ge5$.

\begin{table}[htbp]
\caption{Values $\Pb_{\pk,\lambda}^{(\xi)}$ (top entry) and $\Lb_{\lambda,\pk}^{(\xi)}$ (bottom entry) for submatrices $\I_{\pk,\lambda}^{\Pb\Lb}$ of the point-line incidence matrix of $\PG(3,q)$, $q\equiv\xi\pmod3$, $\xi\in\{1,-1\}$, $q\ge5$, $\pk\in\Mk^{(\ne0)}$, $\lambda\in\Lk^{(\ne0)}$. The superscript~$(\xi)$ is $(\ne0)$ if a value is the same for all~$q\not\equiv0\pmod3$}
\label{tab2}
\centering
\begin{tabular}{lccccccc}\hline
&&&$\M_1^{(\ne0)}$&$\M_2^{(\ne0)}$&$\M_3^{(\ne0)}$&$\M_4^{(\ne0)}$&$\M_5^{(\ne0)}$\\
&&&$\C\T{-}$&$\Tr\T{-}$&$3_\mathrm{\Gamma}\T{-}$&$1_\mathrm{\Gamma}\T{-}$&$0_\mathrm{\Gamma}\T{-}$\\
$\OO_j$&$\lambda\T{-lines}$&$\Pb_{\pk,\lambda}^{(\xi)}$&points&points&points&points&points\\
$\OO'_j$&$\#\OO_\lambda$&$\Lb^{(\xi)}_{\lambda,\pk}$&$q+1$&$q^2+q$&$\frac{1}{6}(q^3-q)$&$ \frac{1}{2}(q^3-q)$&$\frac{1}{3}(q^3-q)$\\\hline

$\OO_1$&$\RC\T{-lines}$&$\Pb_{\pk,\RC}^{(1)}$&$2$&$0$&$\frac{1}{3}(q-1)$&$0$&$\frac{2}{3}(q-1)$\\
&$\frac{1}{2}(q^2+q)$&$\Lb^{(1)}_{\RC,\pk}$&$q$&$0$&$ 1$&$ 0$&$ 1$\\\hline

$\OO_1$&$\RC\T{-lines}$&$\Pb_{\pk,\RC}^{(-1)}$&$2$&$0$&$0$&$ q-1$&$0$\\
&$\frac{1}{2}(q^2+q)$&$\Lb^{(-1)}_{\RC,\pk}$&$q$&$0$&$ 0$&$ 1$&$ 0$\\\hline

$\OO'_1$&$\RA\T{-lines}$&$\Pb_{\pk,\RA}^{(\ne0)}$&$0$&$2$&$q-1$&$0$&$0$\\
$$&$\frac{1}{2}(q^2+q)$&$\Lb_{\RA,\pk}^{(\ne0)}$&$0$&$1$&$3$&$0$&$0$\\\hline

$\OO_2$&$\Tr\T{-lines}$&$\Pb_{\pk,\Tr}^{(\ne0)}$&$1$&$q$&$0$&$0$&$0$\\
$\OO'_2$&$q+1$&$\Lb_{\Tr,\pk}^{(\ne0)}$&$1$&$1$&$0$&$0$&$0$\\\hline

$\OO_3$&$\IC\T{-lines}$&$\Pb^{(1)}_{\pk,\IC}$&$0$&$0$&$ 0$&$ q+1$&$ 0$\\
&$\frac{1}{2}(q^2-q)$&$\Lb^{(1)}_{\IC,\pk}$&$0$&$0$&$ 0$&$ 1$&$ 0$\\\hline

$\OO_3$&$\IC\T{-lines}$&$\Pb^{(-1)}_{\pk,\IC}$&$0$&$0$&$\frac{1}{3}(q+1)$&$ 0$&$\frac{2}{3}(q+1)$\\
$$&$\frac{1}{2}(q^2-q)$&$\Lb^{(-1)}_{\IC,\pk}$&$0$&$0$&$ 1$&$ 0$&$ 1$\\\hline

$\OO'_3$&$\IA\T{-lines}$&$\Pb_{\pk,\IA}^{(\ne0)}$&$0$&$0$&$0$&$q+1$&$0$\\
&$\frac{1}{2}(q^2-q)$&$\Lb_{\IA,\pk}^{(\ne0)}$&$0$&$0$&$0$&$1$&$0$\\\hline

$\OO_4$&$\UG\T{-lines}$&$\Pb_{\pk,\UG}^{(\ne0)}$&$1$&$1$&$\frac{1}{2}(q-1)$&$\frac{1}{2}(q-1)$&$0$\\
$\OO'_4$&$q^2+q$&$\Lb_{\UG,\pk}^{(\ne0)}$&$q$&$1$&$3$&$1$&$0$\\\hline

$\OO_5$&$\UnG\T{-lines}$&$\Pb^{(1)}_{\pk,\UnG}$&$1$&$1$&$ \frac{1}{6}(q-4)$&$ \frac{1}{2}q$&$\frac{1}{3} (q-1)$\\
&$q^3-q$&$\Lb^{(1)}_{\UnG,\pk}$&$q^2-q$&$q-1$&$ q-4$&$ q$&$ q-1$\\\hline

$\OO_5$&$\UnG\T{-lines}$&$\Pb^{(-1)}_{\pk,\UnG}$&$1$&$1$&$\frac{1}{6}(q-2)$&$ \frac{1}{2}(q-2)$&$\frac{1}{3}(q+1)$\\
&$q^3-q$&$\Lb^{(-1)}_{\UnG,\pk}$&$q^2-q$&$q-1$&$ q-2$&$ q-2$&$ q+1$\\\hline

$\OO'_5$&$\EG\T{-lines}$&$\Pb_{\pk,\EG}^{(\ne0)}$&$0$&$2$&$ \frac{1}{2}(q-2)$&$ \frac{1}{2}q$&$0$\\
&$q^3-q$&$\Lb_{\EG,\pk}^{(\ne0)}$&$0$&$2(q-1)$&$ 3(q-2)$&$ q$&$0$\\\hline

$\OO_6$&$\EnG\T{-lines}$&$\Pb^{(1)}_{\pk,\EnG}$&$0$&$1$&$ \frac{q^2-3q+4}{6(q-1)}$&$ \frac{(q+1)(q-2)}{2(q-1)}$&$ \frac{q^2+1}{3(q-1)}$\\
$\OO'_6$&$(q^2-q)\cdot$&$\Lb^{(1)}_{\EnG,\pk}$&$0$&$(q-1)^2$&$q^2- $&$(q+1)\cdot$&$q^2+1$\\
&$(q^2-1)$&&&&$3q+4$&$(q-2)$&\\\hline

$\OO_6$&$\EnG\T{-lines}$&$\Pb^{(-1)}_{\pk,\EnG}$&$0$&$1$&$ \frac{1}{6}(q-2)$&$ \frac{1}{2}q$&$ \frac{1}{3}(q+1)$\\
$\OO'_6$&$(q^2-q)\cdot$&$\Lb^{(-1)}_{\EnG,\pk}$&$0$&$(q-1)^2$&$(q-1)\cdot$&$ q^2-q$&$ q^2-1$\\
&$(q^2-1)$&&&&$(q-2)$&&\\\hline
\end{tabular}
\end{table}

\begin{table}[htbp]
\caption{Values $\Pb_{\pk,\lambda_j}^{(\xi)\bullet}$ (top entry) and
 $\Lb_{\lambda_j,\pk}^{(\xi)\bullet}$ (bottom entry) for submatrices $\I_{\pk,\lambda_j}^{\Pb\Lb}$ of the point-line incidence matrix of $\PG(3,q), q\ge5,~\pk\in\Mk^{(\ne0)}$; $j=1,2$; $\lambda=\UG$ with even $q\not\equiv0\pmod3$ ($\UG_1$- and $\UG_2$-lines);
 $\lambda=\UnG$ with odd $q\equiv\xi\pmod3$, $\xi\in\{1,-1\}$ ($\UnG_1$- and $\UnG_2$-lines for $\xi=1$ and $\xi=-1$);
 $\lambda=\EG$ with odd $q\not\equiv0\pmod3$ ($\EG_1$- and $\EG_2$-lines)}
\label{tab3}
 \centering
\begin{tabular}{lccccccc}\hline
&&&$\M_1^{(\ne0)}$&$\M_2^{(\ne0)}$&$\M_3^{(\ne0)}$&$\M_4^{(\ne0)}$&$\M_5^{(\ne0)}$\\
&&&$\C\T{-}$&$\Tr\T{-}$&$3_\mathrm{\Gamma}\T{-}$&$1_\mathrm{\Gamma}\T{-}$&$0_\mathrm{\Gamma}\T{-}$\\
$\OO_{i_j}$&$\lambda_j\T{-lines}$&$\Pb_{\pk,\lambda_j}^{(\xi)}$&$\T{points}$&$\T{points}$&$\T{points}$&$\T{points}$&$\T{points}$\\
$\OO'_{i_j}$&$\#\OO_{\lambda_j}$&$\Lb^{(\xi)}_{\lambda_j,\pk}$&$q+1$&$q^2+q$&$\frac{q^3-q}{6}$&$ \frac{q^3-q}{2}$&$\frac{q^3-q}{3}\vphantom{H_{H_H}}$\\\hline

$\OO_{4_1}$&$\UG_1\T{-lines}$&$\Pb_{\pk,\UG_1}^{(\ne0)\mathrm{ev}}$&$1$&$q$&$ 0$&$ 0$&$0$\\
&$q+1$&$\Lb_{\UG_1,\pk}^{(\ne0)\mathrm{ev}}$&$1$&$1$&$ 0$&$ 0$&$0$\\\hline

$\OO_{4_2}$&$\UG_2\T{-lines}$&$\Pb_{\pk,\UG_2}^{(\ne0)\mathrm{ev}}$&$1$&$0$&$ \frac{1}{2}q$&$\frac{1}{2}q$&$0$\\
&$q^2-1$&$\Lb_{\UG_2,\pk}^{(\ne0)\mathrm{ev}}$&$q-1$&$0$&$ 3$&$ 1$&$0$\\\hline

$\OO_{5_1}$&$\UnG_1\T{-}$&$\Pb^{(1)\mathrm{od}}_{\pk,\UnG_1}$&$1$&$0$&$ \frac{1}{6}(q-1)$&$ \frac{1}{2}(q+1)$&$ \frac{1}{3}(q-1)$\\
$\xi=$&lines&&&&&&\\
$1$&$\frac{1}{2}(q^3-q)$&$\Lb^{(1)\mathrm{od}}_{\UnG_1,\pk}$&$\frac{1}{2}(q^2-q)$&$0
$&$\frac{1}{2}(q-1)$&$\frac{1}{2}(q+1)$&$\frac{1}{2}(q-1)$\\\hline

 $\OO_{5_2}$&$\UnG_2\T{-}$&$\Pb^{(1)\mathrm{od}}_{\pk,\UnG_2}$&$1$&$2$&$\frac{1}{6}(q-7)$&$\frac{1}{2}(q-1)$&$\frac{1}{3}(q-1)$\\
 $\xi=$&lines&&&&&&\\
 $1$&$\frac{1}{2}(q^3-q)$&$\Lb^{(1)\mathrm{od}}_{\UnG_2,\pk}$&$\frac{1}{2}(q^2-q)$&$q-1
 $&$\frac{1}{2}(q-7)$&$\frac{1}{2}(q-1)$&$\frac{1}{2}(q-1)$\\\hline

 $\OO_{5_1}$&$\UnG_1\T{-}$&$\Pb^{(-1)\mathrm{od}}_{\pk,\UnG_1}$&$1$&$0$&$ \frac{1}{6}(q+1)$&$ \frac{1}{2}(q-1)$&$ \frac{1}{3}(q+1)$\\
 $\xi=$&lines&&&&&&\\
 $-1$&$\frac{1}{2}(q^3-q)$&$\Lb^{(-1)\mathrm{od}}_{\UnG_1,\pk}$&$\frac{1}{2}(q^2-q)$&$0
 $&$\frac{1}{2}(q+1)$&$\frac{1}{2}(q-1)$&$\frac{1}{2}(q+1)$\\\hline

 $\OO_{5_2}$&$\UnG_2\T{-}$&$\Pb^{(-1)\mathrm{od}}_{\pk,\UnG_2}$&$1$&$2$&$\frac{1}{6}(q-5)$&$\frac{1}{2}(q-3)$&$ \frac{1}{3}(q+1)$\\
 $\xi=$&lines&&&&&&\\
 $-1$&$\frac{1}{2}(q^3-q)$&$\Lb^{(-1)\mathrm{od}}_{\UnG_2,\pk}$&$\frac{1}{2}(q^2-q)$&$q-1
 $&$\frac{1}{2}(q-5)$&$\frac{1}{2}(q-3)$&$ \frac{1}{2}(q+1)$\\\hline

 $\OO'_{5_1}$&$\EG_1\T{-lines}$&$\Pb_{\pk,\EG{_1}}^{(\ne0)\mathrm{od}}$&$0$&$1$&$ \frac{1}{2}(q-1)$&$ \frac{1}{2}(q+1)$&$0$\\
 &$\frac{1}{2}(q^3-q)$&$\Lb_{\EG{_1},\pk}^{(\ne0)\mathrm{od}}$&$0$&$\frac{1}{2}(q-1)$&$ \frac{3}{2}(q-1)$&$ \frac{1}{2}(q+1)$&$0$\\\hline

 $\OO'_{5_2}$&$\EG_2\T{-lines}$&$\Pb_{\pk,\EG{_2}}^{(\ne0)\mathrm{od}}$&$0$&$3$&$ \frac{1}{2}(q-3)$&$\frac{1}{2}( q-1)$&$0$\\
 &$\frac{1}{2}(q^3-q)$&$\Lb_{\EG{_2},\pk}^{(\ne0)\mathrm{od}}$&$0$&$\frac{3}{2}(q-1)$&$ \frac{3}{2}(q-3)$&$ \frac{1}{2}(q-1)$&$0$\\\hline
\end{tabular}
\end{table}

\begin{table}[htbp]
\caption{Values $\Pb_{\pk,\lambda}^{(0)}$ (top entry) and $\Lb_{\lambda,\pk}^{(0)}$ (bottom entry) for submatrices $\I_{\pk,\lambda}^{\Pb\Lb}$ of the point-line incidence matrix $\I^{\Pb\Lb}$ of $\PG(3,q)$, $q\equiv0\pmod3$, $q\ge9$, $\lambda\in\Lk^{(0)}$, $\pk\in\Mk^{(0)}$}
\label{tab4}
\centering
\begin{tabular}{cccccccc}\hline
&&&$\M_1^{(0)}$&$\M_2^{(0)}$&$\M_3^{(0)}$&$\M_4^{(0)}$&$\M_5^{(0)}$\\
&&&$\C\T{-}$&$(q+1)_\mathrm{\Gamma}\T{-}$&$\TO\T{-}$&$\RC\T{-}$&$\IC\T{-}$\\
&$\lambda\T{-lines}$&$\Pb_{\pk,\lambda}^{(0)}$&$\,\T{points}$&$\T{points}$&$\T{points}$&$\T{points}$&$\T{points}$\\
$\OO_j$&$\#\OO_\lambda$&$\Lb^{(0)}_{\lambda,\pk}$&$q+1$&$
q+1$&$q^2-1$&$ \frac{1}{2}(q^3-q)$&$\frac{1}{2}(q^3-q)$\\\hline

$\OO_1$&$\RC\T{-lines}$&$\Pb^{(0)}_{\pk,\RC}$&$2$&$0$&$0$&$q-1$&$0$\\
&$\frac{1}{2}(q^2+q)$&$\Lb^{(0)}_{\RC,\pk}$&$q$&$0$&$0$&$1$&$0$\\\hline

$\OO_2$&$\Tr\T{-lines}$&$\Pb^{(0)}_{\pk,\Tr}$&$1$&$1$&$q-1$&$0$&$0$\\
&$q+1$&$\Lb^{(0)}_{\Tr,\pk}$&$1$&$1$&$1$&$0$&$0$\\\hline

$\OO_3$&$\IC\T{-lines}$&$\Pb^{(0)}_{\pk,\IC}$&$0$&$0$&$0$&$0$&$q+1$\\
&$\frac{1}{2}(q^2-q)$&$\Lb^{(0)}_{\IC,\pk}$&$0$&$0$&$0$&$0$&$1$\\\hline

$\OO_4$&$\UG\T{-lines}$&$\Pb^{(0)}_{\pk,\UG}$&$1$&$1$&$0$&$\frac{1}{2}(q-1)$&$\frac{1}{2}(q-1)$\\
&$q^2+q$&$\Lb^{(0)}_{\UG,\pk}$&$q$&$q$&$0$&$1$&$1$\\\hline

$\OO_5$&$\UnG\T{-lines}$&$\Pb^{(0)}_{\pk,\UnG}$&$1$&$0$&$1$&$\frac{1}{2}(q-2)$&$\frac{1}{2}q$\\
&$q^3-q$&$\Lb^{(0)}_{\UnG,\pk}$&$q^2-q$&$0$&$q$&$q-2$&$q$\\\hline

$\OO_6$&$\EnG\T{-lines}$&$\Pb^{(0)}_{\pk,\EnG}$&$0$&$0$&$1$&$\frac{q^2-q+1}{2(q-1)}$&$\frac{q^2-q-1}{2(q-1)}$\\
&$(q^2-q)\cdot$&$\Lb^{(0)}_{\EnG,\pk}$&$0$&$0$&$q^2-q$&$q^2-q+1$&$q^2-q-1$\\
&$(q^2-1)$&&&&&\\\hline

$\OO_7$&$\Ar\T{-lines}$&$\Pb_{\pk,\Ar}^{(0)}$&$0$&$q+1$&$0$&$0$&$0$\\
&$1$&$\Lb^{(0)}_{\Ar,\pk}$&$0$&$1$&$0$&$0$&$0$\\\hline

$\OO_8$&$\EA\T{-lines}$&$\Pb_{\pk,\EA}^{(0)}$&$0$&$1$&$\frac{q}{q+1}$&$\frac{q^2}{2(q+1)}$&$\frac{q^2}{2(q+1)}$\\
&$(q+1)\cdot$&$\Lb^{(0)}_{\EA,\pk}$&$0$&$q^2-1$&$q$&$q$&$q$\\
&$(q^2-1)$&&&&\\\hline
\end{tabular}
\end{table}

\begin{table}[h]
\caption{Values $\Pb_{\pk,\lambda_j}^{(0)}$ (top entry) and $\Lb_{\lambda_j,\pk}^{(0)}$ (bottom entry) for submatrices $\I_{\pk,\lambda_j}^{\Pb\Lb}$ of the point-line incidence matrix $\I^{\Pb\Lb}$ of $\PG(3,q)$, $q\equiv0\pmod3$, $q\ge9$, $\pk\in\Mk^{(0)}$, $\lambda\in\{\UnG,\EA\}$,  $j=1,2$ if $\lambda=\UnG$, $j=1,2,3$ if $\lambda=\EA $}
\label{tab5}
\centering
\begin{tabular}{cccccccc}\hline
&&&$\M_1^{(0)}$&$\M_2^{(0)}$&$\M_3^{(0)}$&$\M_4^{(0)}$&$\M_5^{(0)}$\\
&&&$\C\T{-}$&$(q+1)_\mathrm{\Gamma}\T{-}$&$\TO\T{-}$&$\RC\T{-}$&$\IC\T{-}$\\
&$\lambda_j\T{-lines}$&$\Pb_{\pk,\lambda}^{(0)}$&$\,\T{points}$&$\T{points}$&$\T{points}$&$\T{points}$&$\T{points}$\\
$\OO_{i_j}$&$\#\OO_{\lambda_j}$&$\Lb^{(0)}_{\lambda,\pk}$&$q+1$&$
q+1$&$q^2-1$&$\frac{q^3-q}{2}$&$\frac{q^3-q}{2}$\\\hline

$\OO_{5_1}$&$\UnG_1\T{-}$&$\Pb^{(0)}_{\pk,\UnG_1}$&$1$&$0$&$0$&$\frac{1}{2}(q-1)$&$\frac{1}{2}(q+1)$\\
$\xi=$&lines&&&&&&\\
$0$&$\frac{1}{2}(q^3-q)$&$\Lb^{(0)}_{\UnG_1,\pk}$&$\frac{1}{2}(q^2-q)$&$0$&$0$&$\frac{1}{2}(q-1)$&$\frac{1}{2}(q+1)$\\\hline

$\OO_{5_2}$&$\UnG_2\T{-}$&$\Pb^{(0)}_{\pk,\UnG_2}$&$1$&$0$&$2$&$\frac{1}{2}(q-3)$&$\frac{1}{2}(q-1)$\\
$\xi=$&lines&&&&&&\\
$0$&$\frac{1}{2}(q^3-q)$&$\Lb^{(0)}_{\UnG_2,\pk}$&$\frac{1}{2}(q^2-q)$&$0$&$q$&$\frac{1}{2}(q-3)$&$\frac{1}{2}(q-1)$\\\hline

$\OO_{8_1}$&$\EA_1\T{-lines}$&$\Pb_{\pk,\EA_1}^{(0)}$&$0$&$1$&$1$&$\frac{1}{2}(q-1)$&$\frac{1}{2}(q-1)$\\
&$q^3-q$&$\Lb^{(0)}_{\EA_1,\pk}$&$0$&$q^2-q$&$q$&$q-1$&$q-1$\\\hline

$\OO_{8_2}$&$\EA_2\T{-lines}$&$\Pb_{\pk,\EA_2}^{(0)}$&$0$&$1$&$0$&$q$&$0$\\
&$\frac{1}{2}(q^2-1)$&$\Lb^{(0)}_{\EA_2,\pk}$&$0$&$\frac{1}{2}(q-1)$&$0$&$1$&$0$\\\hline

$\OO_{8_3}$&$\EA_3\T{-lines}$&$\Pb_{\pk,\EA_3}^{(0)}$&$0$&$1$&$0$&$0$&$q$\\
&$\frac{1}{2}(q^2-1)$&$\Lb^{(0)}_{\EA_3,\pk}$&$0$&$\frac{1}{2}(q-1)$&$0$&$0$&$1$\\\hline
\end{tabular}
\end{table}

For the point-line incidence matrix $\I^{\Pb\Lb}$ of $\PG(3,q)$,
 $q\equiv\xi\pmod3$, Tables~\ref{tab2} (for $q\not\equiv0\pmod3$) and \ref{tab4} (for $q\equiv0\pmod3$) show the values $\Pb_{\pk,\lambda}^{(\xi)}$  (top entry) and $\Lb_{\lambda,\pk}^{(\xi)}$ (bottom entry) for each pair $(\pk,\lambda)$, $\pk\in\Mk^{(\xi)}$, $\lambda\in\Lk^{(\xi)}$,  where $\Pb_{\pk,\lambda}^{(\xi)}$  is  the exact (if $L_{\lambda\mathrm{\Sigma}}^{(\xi)\bullet}=1$) or average (if $L_{\lambda\mathrm{\Sigma}}^{(\xi)\bullet}\ge2$) number of $\pk$-points on every $\lambda$-line, whereas $\Lb_{\lambda,\pk}^{(\xi)}$  always is the exact number of $\lambda$-lines through every $\pk$-point. In other words, $\Pb_{\pk,\lambda}^{(\xi)}$  is the exact or average number of ones in every row of the submatrix $\I^{\Pb\Lb}_{\pk,\lambda}$  of $\I^{\Pb\Lb}$, whereas $\Lb_{\lambda,\pk}^{(\xi)}$ always is the exact number of ones in every column of $\I^{\Pb\Lb}_{\pk,\lambda}$. In Table \ref{tab2}, the superscript $(\xi)$ is $(\ne0)$ for $\lambda\in\{\RA,\Tr,\IA,\UG,\EG\}$ where the values $\Pb_{\pk,\lambda}^{(\xi)}$, $\Lb_{\lambda,\pk}^{(\xi)}$ are the same for all~$q\not\equiv0\pmod3$.

The total number of orbits of $\lambda$-lines  is given in Table 1.

In Table \ref{tab3}, the values  $\Pb_{\pk,\lambda_j}^{(\xi)\bullet}$ and $\Lb_{\lambda_j,\pk}^{(\xi)\bullet}$ are given for the following cases:
$q\ge5,~\pk\in\Mk^{(\ne0)}$; $\lambda=\UG$ with even $q\not\equiv0\pmod3$ ($\UG_1$- and $\UG_2$-lines);
 $\lambda=\UnG$ with odd $q\equiv\xi\pmod3$, $\xi\in\{1,-1\}$ ($\UnG_1$- and $\UnG_2$-lines for $\xi=1$ and $\xi=-1$);
 $\lambda=\EG$ with odd $q\not\equiv0\pmod3$ ($\EG_1$- and $\EG_2$-lines).

In Table \ref{tab5}, the values  $\Pb_{\pk,\lambda_j}^{(0)}$ and $\Lb_{\lambda_j,\pk}^{(0)}$ are given for the following cases:
$q\equiv0\pmod3$, $q\ge9$, $\pk\in\Mk^{(0)}$, $\lambda\in\{\UnG,\EA\}$,  $j=1,2$ if $\lambda=\UnG$, $j=1,2,3$ if $\lambda=\EA $.

\begin{theorem}\label{th3_main_res}
Let $q\ge5$, $q\equiv\xi\pmod3$. Let notations be as in Section $\ref{sec_prelimin}$ and  Notations~$1, 2$. The following holds:
\begin{description}
  \item[(i)] In $\PG(3,q)$, for the submatrices $\I^{\Pb\Lb}_{\pk,\lambda}$ of the point-line incidence matrix $\I^{\Pb\Lb}$, the values $\Pb_{\pk,\lambda}^{(\xi)}$ (i.e. the exact or average number of $\pk$-points on a $\lambda$-line) and $\Lb_{\lambda,\pk}^{(\xi)}$ (i.e. the exact number of  $\lambda$-lines through a $\pk$-point) are given in Table $\ref{tab2}$ (for $\xi\ne0$) and Table $\ref{tab4}$  (for $\xi=0$).

      For the submatrices $\I^{\Pb\Lb}_{\pk,\lambda_j}$ corresponding to each of two orbits of the classes $\OO_4=\OO_\UG$, $\OO_5=\OO_\UnG $, and $\OO'_5=\OO_\EG$, the values $\Pb_{\pk,\lambda_j}^{(\xi)\bullet}$, $\Lb_{\lambda_j,\pk}^{(\xi)\bullet}$ are given in Table $\ref{tab3}$  (for $\xi\ne0$). For the submatrices $\I^{\Pb\Lb}_{\pk,\lambda_j}$ corresponding to each of two orbits of the class $\OO_5=\OO_\UnG $ and to each of three orbits of the class $\OO_8=\OO_\EA$, the values $\Pb_{\pk,\lambda_j}^{(0)}$, $\Lb_{\lambda_j,\pk}^{(0)}$ are given in Table $\ref{tab5}$  (for $\xi=0$).

  \item[(ii)] Let a class $\OO_\lambda$ consist of a single
  orbit according to Table $\ref{tab1}$. Then, in Tables $\ref{tab2}$ and $\ref{tab4}$,  the values of $\Pb_{\pk,\lambda}^{(\xi)}$, $\pk\in\Mk^{(\xi)}$, are the \emph{exact numbers} of $\pk$-points on every $\lambda$-line.

\item[(iii)]  Let $q\equiv1\pmod3$. Let $V^{(1)}=\{\OO_1=\OO_\RC,\OO_2=\OO_\Tr,\OO'_3=\OO_\IA\}$. Then, cf. Theorem $\ref{th2_Hirs}$(iv), no two lines of $V^{(1)}$ meet off $\C$. Every point off $\C$ lies on exactly one line of~$V^{(1)}$.

 \item[(iv)]
  Let $q\equiv0\pmod3$. Let $\W^{(0)}=\{\OO_2=\OO_\Tr,\OO_4=\OO_\UG\}$. Let $\mathbb{M}=\C\cup\Ar$-line be the union of the twisted cubic and the $\Ar$-line.  Then
  no two lines of $\W^{(0)}$ meet off $\mathbb{M}$.
 Every point off $\mathbb{M}$ lies on exactly one line of~$\W^{(0)}$, cf. Theorems $\ref{th2_Hirs}$(iv) and $\ref{th3_main_res}$(iii).

\item[(v)] Let $\pk\in\Mk^{(\xi)}$. Let a class $\OO_\lambda$ consist of a single orbit.
Then the submatrix $\I^{\Pb\Lb}_{\pk,\lambda}$ of  $\I^{\Pb\Lb}$ is
  a $(v_r,b_k)$ configuration of Definition \emph{\ref{def2_config}} with $v=\#\M_\pk$, $b=\#\OO_\lambda$, $r=\Lb_{\lambda,\pk}^{(\xi)}$, $k=\Pb_{\pk,\lambda}^{(\xi)}$. Also, up to rearrangement of rows and columns, the submatrices $\I^{\Pb\Lb}_{\pk,\lambda}$ with $\Lb_{\lambda,\pk}^{(\xi)}=1$ can be viewed as a concatenation of $\Pb_{\pk,\lambda}^{(\xi)}$ identity matrices of order $\#\OO_\lambda$. The same holds for the submatrices~$\I^{\Pb\Lb}_{\pk,\lambda_j}$.

\item[(vi)] Let $(\lambda,\pk)\in\{(\UG,\C), (\UnG,\C)\}$ if $\xi\ne0$, and
 $(\lambda,\pk)\in\{(\UnG,\C),$ $ (\EA,(q+1)_\mathrm{\Gamma})\}$ if $\xi=0$. Then,  independently of the number of orbits in the class $\OO_\lambda$, we have exactly one $\pk$-point on every $\lambda$-line. Up to rearrangement of rows and columns, the submatrices $\I^{\Pb\Lb}_{\pk,\lambda}$ can be viewed as a vertical concatenation of\/ $\Lb_{\lambda,\pk}^{(\xi)}$ identity matrices of order $\#\M_\pk$.
\end{description}
\end{theorem}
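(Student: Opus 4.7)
The plan is to prove the six parts in order, with part (i) containing nearly all of the work and the others following as near-formal consequences.

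For part (i), I will proceed class by class through $\Lk^{(\xi)}$. For each pair $(\pk,\lambda)$ the two numbers $\Pb_{\pk,\lambda}^{(\xi)}$ and $\Lb_{\lambda,\pk}^{(\xi)}$ are linked by the double-counting identity
\begin{equation*}
\#\OO_\lambda \cdot \Pb_{\pk,\lambda}^{(\xi)} \;=\; \#\M_\pk \cdot \Lb_{\lambda,\pk}^{(\xi)},
\end{equation*}
so it is enough to compute one and read off the other from Theorem \ref{th2_Hirs} and Table \ref{tab1}. Three geometric ingredients supply the individual counts: (a) the defining description of each line class (tangent, chord, axis, unisecant, external), combined with Theorem \ref{th2_Hirs}(iv), immediately settles incidences with $\C$-, $\Tr$-, $\RC$- and $\IC$-points; (b) for $\xi \ne 0$, the null polarity $\A$ (Theorem \ref{th2_Hirs}(iii), Theorem \ref{th2_null_pol}) interchanges point and plane orbits and sends line orbits to line orbits via $\OO'_i = \OO_i\A$, so the plane-line numbers $\mathrm{\Pi}_{\pi,\lambda}$, $\mathrm{\Lambda}_{\lambda,\pi}$ already tabulated in \cite{DMP_PlLineInc} convert directly into point-line incidence numbers for the dual line class; (c) for the split classes $\OO_\UG$, $\OO_\UnG$, $\OO_\EG$ and $\OO_\EA$, I will fix explicit representative lines of each sub-orbit $\OO_{\lambda_j}$ as in \cite{DMP_OrbLine} and enumerate $\pk$-points on them directly, then apply the same double count with $\#\OO_{\lambda_j}$ replacing $\#\OO_\lambda$; the identity $\sum_j \Lb_{\lambda_j,\pk}^{(\xi)\bullet} = \Lb_{\lambda,\pk}^{(\xi)}$ gives a self-check against the totals in Tables \ref{tab2}/\ref{tab4}. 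The case $\xi = 0$ needs an independent pass because $\A$ is not available: the osculating developable becomes a pencil with axis $\Ar$, so $\mathbb{M} = \C \cup \Ar\textrm{-line}$ plays the structural role that $\C$ alone played before, and representatives of $\OO_{\EA_j}$ arise as joins of $\Ar$ with a chosen $\RC$-, $\IC$-, or $\TO$-point.

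Parts (ii)--(vi) then follow quickly. Part (ii) is immediate: if $L_{\lambda\mathrm{\Sigma}}^{(\xi)\bullet} = 1$ then $G_q$ is transitive on $\OO_\lambda$ and the number of $\pk$-points on any $\lambda$-line is a $G_q$-invariant function, hence constant. Parts (iii) and (iv) are read off the completed tables by checking, for each point type off $\C$ (respectively off $\mathbb{M}$), that the column of $\Lb$-values restricted to the set $V^{(1)}$ (respectively $\W^{(0)}$) sums to exactly $1$; the ``existence'' half then comes from Theorem \ref{th2_Hirs}(iv) applied to chords, together with its analogue for the $\Ar$-plus-$\UG$ structure in characteristic~$3$. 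Part (v) is a restatement: constant column sum $\Lb_{\lambda,\pk}^{(\xi)}$, constant row sum $\Pb_{\pk,\lambda}^{(\xi)}$, and the axiom that two distinct points lie on at most one line together give a $(v_r,b_k)$ configuration in the sense of Definition \ref{def2_config}, and the $\Lb = 1$ case encodes a partition of the $\pk$-points by the $\lambda$-lines, i.e.\ a concatenation of $\Pb_{\pk,\lambda}^{(\xi)}$ identity matrices. Part (vi) is dual: when every $\lambda$-line carries exactly one $\pk$-point independently of which sub-orbit it lies in, the $\lambda$-lines partition into $\Lb_{\lambda,\pk}^{(\xi)}$ ``sheets'' each indexed bijectively by $\M_\pk$, yielding the stated vertical concatenation.

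The hardest step will be the split-orbit counting in (i), in particular for $\OO_\UnG$ across all three values of $\xi$, where the arithmetic distinguishing $\UnG_1$ from $\UnG_2$ depends on quadratic and cubic residuacity conditions in $\F_q$; the natural cross-checks are agreement, after applying $\A$, with the plane-line data of \cite{DMP_PlLineInc} and, for odd $q \not\equiv 0 \pmod 3$, with the point orbit distributions of \cite{GulLav}.
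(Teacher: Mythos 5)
Your overall strategy coincides with the paper's: part (i) is where the work lives, the two regimes $\xi\ne0$ and $\xi=0$ are treated separately, the case $\xi\ne0$ is settled by transporting the plane-line incidence data of \cite{DMP_PlLineInc} through the null polarity $\A$ (this is exactly the paper's Theorem \ref{th5:Pi-->Pb} and Corollary \ref{cor5:for tables}), the case $\xi=0$ is done by direct geometric counting, the two counts for each pair $(\pk,\lambda)$ are linked by the double-counting identity \eqref{eq4:Pi_aver}, and parts (ii)--(vi) are formal consequences read off the completed tables exactly as in Section \ref{sec:gen res}. The one genuine methodological divergence is your ingredient (c) for the split classes. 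The paper never fixes explicit orbit representatives: for $\xi\ne0$ the sub-orbit values $\Pb_{\pk,\lambda_j}$, $\Lb_{\lambda_j,\pk}$ of Table \ref{tab3} come for free from the polarity, since $\A$ maps orbits to orbits (Theorem \ref{th2_null_pol}) and hence matches $\OO_{\UnG_j}$ with $\OO_{\EG_j}$ etc.; for $\xi=0$ the splittings of $\OO_\UnG$ and $\OO_\EA$ in Table \ref{tab5} are pinned down indirectly, by combining the per-orbit plane-line counts $\mathrm{\Pi}_{\pi,\lambda_j}$ from \cite[Table 2]{DMP_PlLineInc} (via the pencil-of-planes observations in Remarks \ref{observation4:EA} and \ref{observation4:IC}) with integrality arguments that exclude the alternative distributions (e.g.\ $\Lb_{\UnG_v,\TO}^{(0)}=q/2$ is impossible). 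This buys the paper a proof that never touches the residuacity conditions you flag as the hardest step; your representative-based enumeration would also work but would import the full orbit classification of \cite{DMP_OrbLine} and require the quadratic/cubic character analysis that the paper deliberately avoids. Both routes are sound; the paper's is shorter and stays at the level of incidence counts.
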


\begin{theorem}\label{th3:q=2 3 4}
 Let the types of lines and points be as in Tables $\ref{tab1}-\ref{tab5}$.
\begin{description}
    \item[(i)] Let $q=2$. The group $G_2\cong\mathbf{S}_3\mathbf{Z}_2^3$ contains $8$ subgroups isomorphic to $PGL(2,2)$ divided into two conjugacy classes. For one of these subgroups, the matrices corresponding to the projectivities of the subgroup assume the form described by \eqref{eq2_M}. For line and point orbits under this subgroup (and only under it) the point-line incidence matrix has the form of  Tables $\ref{tab2}$ and~$\ref{tab3}$ for even $q\equiv-1\pmod3$ and also Table $\ref{tab1}$ holds.

    \item[(ii)] Let $q=3$. The group $G_3\cong\mathbf{S}_4\mathbf{Z}_2^3$ contains $24$ subgroups isomorphic to $PGL(2,3)$ divided into four conjugacy classes. For one of these subgroups, the matrices corresponding to the projectivities of the subgroup assume the form described by \eqref{eq2_M}. For line and point orbits under this subgroup (and only under it) the point-line incidence matrix has the form of  Tables $\ref{tab4}$ and~$\ref{tab5}$ for $q\equiv0\pmod3$ and also Table $\ref{tab1}$ holds.

    \item[(iii)] Let $q=4$. The group $G_4\cong\mathbf{S}_5\cong P\mathrm{\Gamma} L(2,4)$ contains one subgroup isomorphic to $PGL(2,4)$. The matrices corresponding to the projectivities of this subgroup assume the form described by \eqref{eq2_M} and for line and point orbits under this subgroup the point-line incidence matrix has the form of  Tables $\ref{tab2}$ and $\ref{tab3}$ for even $q\equiv1\pmod3$ and also Table $\ref{tab1}$ holds.

    \item[(iv)] For line orbits under the subgroups of $G_q$ noted in the points (i)--(iii) of this theorem, Theorem $\ref{th2:MAGMA}$ holds also if $q=2,3,4$.
\end{description}
\end{theorem}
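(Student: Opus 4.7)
The plan is to carry out an exhaustive computer-algebra verification in Magma, since for $q\in\{2,3,4\}$ the space $\PG(3,q)$ is small enough that all points, lines, projectivities, subgroups and orbits can be enumerated directly. I would proceed in four phases, one for each part of the theorem, so that parts (i)--(iii) are handled by the same template and part (iv) is obtained as a by-product.

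For each $q\in\{2,3,4\}$, I would first build the full stabilizer group $G_q$ of the canonical twisted cubic $\C$ in $\PG(3,q)$ as a subgroup of $PGL(4,q)$. Concretely, I would generate all projectivities of $\PG(3,q)$ that permute the points of $\C$ given by \eqref{eq2:P(t)}--\eqref{eq2_cubic}, and verify the abstract structure $G_2\cong \mathbf{S}_3\mathbf{Z}_2^3$, $G_3\cong\mathbf{S}_4\mathbf{Z}_2^3$, $G_4\cong \mathbf{S}_5$ stated in Theorem~\ref{th2_Hirs}(i). Then I would enumerate all subgroups $H\le G_q$, filter those with $|H|=|PGL(2,q)|$ and structure $\cong PGL(2,q)$, and partition them into $G_q$-conjugacy classes. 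This should produce exactly $8$ such subgroups in two classes for $q=2$, exactly $24$ in four classes for $q=3$, and exactly one for $q=4$, matching the counts in the theorem statement.

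Next, for each subgroup $H$ in the list, I would test whether the $4\times 4$ matrices representing its projectivities are precisely those given by the parametric form \eqref{eq2_M} as $(a,b,c,d)$ ranges over $\F_q^4$ with $ad-bc\ne 0$. This picks out a unique $H_0\le G_q$ in cases (i) and (ii), and the unique subgroup in case (iii). Having fixed $H_0$, I would compute the orbits on the $\theta_{3,q}$ points and on the $\beta_{3,q}$ lines of $\PG(3,q)$, classify each orbit by the geometric type $\pk\in\Mk^{(\xi)}$, $\lambda\in\Lk^{(\xi)}$ defined in Notation~1 (using tangents, chords, osculating planes, the axis $\Ar$ for $q=3$, etc., tested directly from \eqref{eq2_osc_plane}), and verify that both the orbit sizes and the partition of $\OO_\lambda$ into sub-orbits match Table~\ref{tab1} and Theorem~\ref{th2:MAGMA}; this yields part~(iv).

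Finally, for each pair $(\pk,\lambda)$ and each sub-orbit $\OO_{\lambda_j}$ I would form the submatrix $\I^{\Pb\Lb}_{\pk,\lambda}$ (respectively $\I^{\Pb\Lb}_{\pk,\lambda_j}$) and compute column and row sums, i.e.\ the values $\Lb^{(\xi)\bullet}_{\lambda,\pk}$ and $\Pb^{(\xi)\bullet}_{\pk,\lambda}$; these then have to be checked against the rows of Table~\ref{tab2}, \ref{tab3} for $q=2,4$ (with $\xi=-1,1$ respectively, both even) and Tables~\ref{tab4}, \ref{tab5} for $q=3$ ($\xi=0$). The only nontrivial conceptual obstacle is distinguishing the several conjugacy classes of $PGL(2,q)$-subgroups inside $G_q$: the statement claims that exactly one class (namely the one whose matrices are of the shape \eqref{eq2_M}) reproduces the generic tables, and the other classes do not. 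Verifying the negative assertion ``and only under it'' amounts to showing, by the same computation applied to a representative of each other conjugacy class, that some entry in the tables fails; this is purely mechanical but is the step that requires the most case-work, and I would organize the Magma script so that it reports for every conjugacy class the first discrepancy with Tables~\ref{tab2}--\ref{tab5}, thereby certifying both the positive and the negative parts of the statement simultaneously.
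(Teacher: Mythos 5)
Your proposal is essentially the paper's own approach: the paper proves Theorem~\ref{th3:q=2 3 4} precisely by an exhaustive computer search in Magma (see the remark preceding Table~\ref{tab2}), and your four-phase plan is a correct and complete elaboration of that computation, including the necessary negative check of the ``and only under it'' clause for the other conjugacy classes.
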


\section{Some useful relations}\label{sec:useful}
In this section, we omit the superscripts ``$(\xi)$'', ``od'', and ``ev'' as they are the same for all terms in a formula; in particular, we use $\Lk$ and $L_{\lambda\mathrm{\Sigma}}$ instead of $\Lk^{(\xi)}$ and $L_{\lambda\mathrm{\Sigma}}^{(\xi)\od}$, $L_{\lambda\mathrm{\Sigma}}^{(\xi)\ev}$. In the rest of the paper, when relations of this section are applied, we add the superscripts if they are necessary by the context.

\begin{lemma}\label{lemma4_line&point}
The following holds:
\begin{description}
  \item[(i)]
  The number  $\Lb_{\lambda_j,\pk}$ of lines from an orbit $\OO_{\lambda_j}$ through a point of an orbit $\M_\pk$ is the same for all points of~$\M_\pk$.

  \item[(ii)]
  The total number  $\Lb_{\lambda,\pk}$ of lines from an orbit union $\OO_\lambda$ through a point of an orbit $\M_\pk$ is the same for all points of~$\M_\pk$. We have
   \begin{align}\label{eq4_class_Lambda}
&  \Lb_{\lambda,\pk}=\sum_{j=1}^{L_{\lambda\mathrm{\Sigma}}} \Lb_{\lambda_j,\pk}.
  \end{align}
  \item[(iii)] The number $\Pb_{\pk,\lambda_j}$ of points from an orbit $\M_\pk$ on a line of an orbit $\OO_{\lambda_j}$ is the same for all lines of $\OO_{\lambda_j}$.

  \item[(iv)] The average
number $\Pb_{\pk,\lambda}$ of points from an orbit $\M_\pk$ on a line of a union $\OO_\lambda$ over all lines of $\OO_\lambda$ satisfies the following relations:
\begin{align}\label{eq4:Pi_aver}
&\Lb_{\lambda,\pk}\cdot\# \M_\pk=\Pb_{\pk,\lambda}\cdot\#\OO_\lambda;\db\\
&\Pb_{\pk,\lambda}=\frac{1}{\#\OO_{\lambda}}\sum\limits_{j=1}^{L_{\lambda\mathrm{\Sigma}}}\left(\Pb_{\pk,\lambda_j}\cdot\#\OO_{\lambda_j}\right).\label{eq4:Pi_aver2}
\end{align}

 \item[(v)]
If $L_{\lambda\mathrm{\Sigma}}=1$, then $\OO_\lambda$ is an orbit and the
number  of points from $\M_\pk$ on a line of $\OO_\lambda$ is
the same for all the lines of $\OO_\lambda$.
In this case, $\Pb_{\pk,\lambda}$ is certainly an integer.
     If $\Pb_{\pk,\lambda}$ is not an integer then the class $\OO_\lambda$ contains more than one orbit, i.e. $L_{\lambda\mathrm{\Sigma}}\ge2$.
\end{description}
\end{lemma}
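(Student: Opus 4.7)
The plan is to reduce the entire lemma to two standard tools: the $G_q$-equivariance of the incidence relation on orbits, and a double-counting argument for incident (point, line) pairs.

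For parts (i) and (iii), I would use the fact that both $\OO_{\lambda_j}$ and $\M_\pk$ are $G_q$-orbits. Given two points $P,P'\in\M_\pk$, transitivity yields a $g\in G_q$ with $g(P)=P'$. Because projectivities preserve incidence and $g$ maps $\OO_{\lambda_j}$ onto itself setwise, the map $\ell\mapsto g(\ell)$ is a bijection from the set of $\OO_{\lambda_j}$-lines through $P$ onto the set of $\OO_{\lambda_j}$-lines through $P'$; hence $\Lb_{\lambda_j,\pk}$ does not depend on the chosen point, proving (i). Part (iii) is the dual statement and follows by swapping the roles of points and lines in the same argument.

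Part (ii) is an immediate consequence of (i): since $\OO_\lambda=\bigcup_{j=1}^{L_{\lambda\mathrm{\Sigma}}}\OO_{\lambda_j}$ is a disjoint union of orbits, the lines from $\OO_\lambda$ through a fixed $\pk$-point decompose accordingly, giving \eqref{eq4_class_Lambda} and, in particular, showing that $\Lb_{\lambda,\pk}$ is the same for every point of $\M_\pk$. For (iv), I would count the set $\mathcal{S}=\{(P,\ell):P\in\M_\pk,\ \ell\in\OO_\lambda,\ P\in\ell\}$ in two ways: summing over points uses (ii) and yields $\#\mathcal{S}=\Lb_{\lambda,\pk}\cdot\#\M_\pk$, while summing over lines and invoking the very definition of $\Pb_{\pk,\lambda}$ as an average gives $\#\mathcal{S}=\Pb_{\pk,\lambda}\cdot\#\OO_\lambda$; this is \eqref{eq4:Pi_aver}. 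To obtain \eqref{eq4:Pi_aver2}, partition $\OO_\lambda$ into sub-orbits $\OO_{\lambda_j}$ and apply (iii) within each sub-orbit, where the count $\Pb_{\pk,\lambda_j}$ is exact, then take the weighted average.

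Finally, part (v) is a direct corollary of (iii): if $L_{\lambda\mathrm{\Sigma}}=1$ then $\OO_\lambda$ itself is a single orbit, so $\Pb_{\pk,\lambda}=\Pb_{\pk,\lambda_1}$ is the exact number of $\pk$-points on every line, and is therefore an integer; contraposing yields the last assertion. There is no real obstacle here — the only point requiring care is to make sure that the averaging definition of $\Pb_{\pk,\lambda}$ is consistent with the sub-orbit decomposition in \eqref{eq4:Pi_aver2}, which is why I would introduce the double-counting identity \eqref{eq4:Pi_aver} first and only then derive the weighted-average form.
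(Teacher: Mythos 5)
Your proposal is correct and follows essentially the same route as the paper: a $G_q$-equivariance argument for (i) and (iii) (the paper phrases the bijection as two mutually inverse embeddings via $\varphi$ and $\varphi^{-1}$, which is the same thing), disjointness of the orbits for (ii), double counting of incident point--line pairs (equivalently, the ones of the submatrix $\I_{\pk,\lambda}^{\Pb\Lb}$) for (iv), and (v) as a direct consequence of (iii). No gaps.
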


\begin{proof}
\begin{description}
  \item[(i)]
Consider points $\pk_1$ and $\pk_2$ of~$\M_\pk$. Denote by $\ell$ a line of $\OO_{\lambda_j}$. Let $S(\pk_1)$ and $S(\pk_2)$ be the subsets of $\OO_{\lambda_j}$ such that $S(\pk_1)=\{\ell\in\OO_{\lambda_j}|\pk_1\in\ell\}$, $S(\pk_2)=\{\ell\in\OO_{\lambda_j}|\pk_2\in\ell\}$. There exists $\varphi\in G_q$  such that  $\pk_2=\pk_1\varphi$. Clearly, $\varphi$ embeds $S(\pk_1)$ in $S(\pk_2)$, i.e. $S(\pk_1)\varphi\subseteq S(\pk_2)$ and $\#S(\pk_1)\le\#S(\pk_2)$. In the same way, $\varphi^{-1}$ embeds $S(\pk_2)$ in $S(\pk_1)$, i.e.  $\#S(\pk_2)\le\#S(\pk_1)$. Thus,  $\#S(\pk_2)=\#S(\pk_1)$.

  \item[(ii)] For a fixed $\lambda$, orbits $\OO_{\lambda_j}$ do not intersect each other.

  \item[(iii)] The assertion can be proved similarly to the case (i).

  \item[(iv)] The cardinality $C_1$ of the multiset consisting of the lines of $\OO_\lambda$ through all the points of $\M_\pk$ is equal to $\Lb_{\lambda,\pk}\cdot\# \M_\pk$. The cardinality $C_2$ of the multiset consisting of the points of $\M_\pk$ on all the lines of $\OO_\lambda$ is  $\Pb_{\pk,\lambda}\cdot\#\OO_\lambda$. Every $C_i$ is the number of ones in the incidence submatrix $\I_{\pk,\lambda}^{\Pb\Lb}$ of $\I^{\Pb\Lb}$.  Thus, $C_1=C_2$.

      The assertion \eqref{eq4:Pi_aver2} holds as $\OO_\lambda$ is \emph{partitioned} into $L_{\lambda\mathrm{\Sigma}}$ orbits $\OO_{\lambda_j}$.

  \item[(v)] The assertion follows from the case (iii). \qedhere
\end{description}
 \end{proof}

\begin{corollary}\label{cor4_=0}
  If  $\Pb_{\pk,\lambda}=0$ then $\Lb_{\lambda,\pk}=0$ and vice versa.
\end{corollary}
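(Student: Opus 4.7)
The proof is essentially a one-line consequence of the double-counting identity already established in Lemma \ref{lemma4_line&point}(iv).

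The plan is to invoke equation \eqref{eq4:Pi_aver}, namely
\begin{equation*}
\Lb_{\lambda,\pk}\cdot\#\M_\pk = \Pb_{\pk,\lambda}\cdot\#\OO_\lambda,
\end{equation*}
which was obtained by counting in two different ways the ones in the incidence submatrix $\I_{\pk,\lambda}^{\Pb\Lb}$. Since $\M_\pk$ is a nonempty point orbit and $\OO_\lambda$ is a nonempty union of line orbits, we have $\#\M_\pk>0$ and $\#\OO_\lambda>0$. Therefore the left-hand side vanishes if and only if $\Lb_{\lambda,\pk}=0$, and the right-hand side vanishes if and only if $\Pb_{\pk,\lambda}=0$. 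The two equivalences combine to give the desired statement in both directions.

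I do not anticipate any obstacle: the corollary is a clean algebraic consequence of a positive-factor cancellation in the identity from Lemma \ref{lemma4_line&point}(iv), and requires no further geometric input. The only thing worth remarking is that the conclusion holds whether $\OO_\lambda$ is a single orbit or a union of several orbits, since the identity \eqref{eq4:Pi_aver} was proved for the average value $\Pb_{\pk,\lambda}$ in general, and the average of nonnegative integers is zero exactly when all of them are zero.
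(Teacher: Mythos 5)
Your proof is correct and follows exactly the paper's route: the paper also derives the corollary directly from the double-counting identity \eqref{eq4:Pi_aver}, and your additional remark that $\#\M_\pk>0$ and $\#\OO_\lambda>0$ just makes explicit the positivity needed for the cancellation.
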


\begin{proof}
    The assertions follow from \eqref{eq4:Pi_aver}.
\end{proof}

\begin{theorem}\label{th4_linepoint}
Let the lines of $\PG(3,q)$ be partitioned under $G_q$ into $\#\Lk$ classes $\OO_\lambda$ where every class is a union of orbits of $\lambda$-lines, $\lambda\in\Lk$. Also, let $\PG(3,q)$ be partitioned under $G_q$ by $\#\Mk$ orbits $\M_\pk$ of $\pk$-points, $\pk\in\Mk$.
The following holds:
\begin{align}
 &\sum_{\pk\in\Mk} \Pb_{\pk,\lambda}=q+1,~\lambda\T{ is fixed};\label{eq4_points_in_line_sum}\db\\
&\sum_{\lambda\in\Lk}\Lb_{\lambda,\pk}=\beta_{2,q}=q^2+q+1,~\pk\T{ is fixed}.\label{eq4_lines_through point_sum}
\end{align}
 \end{theorem}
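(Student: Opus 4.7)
The plan is to prove both identities by straightforward double counting, exploiting the two obvious partitions: the orbits $\M_\pk$ partition the point set of $\PG(3,q)$, and the classes $\OO_\lambda$ partition the line set of $\PG(3,q)$. The fact that $\Lb_{\lambda,\pk}$ is an exact integer (by Lemma~\ref{lemma4_line&point}(ii)) while $\Pb_{\pk,\lambda}$ is only an average requires a small difference in how the two parts are handled.

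For \eqref{eq4_points_in_line_sum}, I would fix a class $\OO_\lambda$ and use the average definition. For any individual line $\ell\in\OO_\lambda$, the partition of $\PG(3,q)$ into point orbits gives
\[
\sum_{\pk\in\Mk}\#(\ell\cap\M_\pk)=\#\ell=\theta_{1,q}=q+1,
\]
because every line of $\PG(3,q)$ contains exactly $q+1$ points. Averaging over $\ell\in\OO_\lambda$ and using the definition
\[
\Pb_{\pk,\lambda}=\frac{1}{\#\OO_\lambda}\sum_{\ell\in\OO_\lambda}\#(\ell\cap\M_\pk)
\]
(which is consistent with \eqref{eq4:Pi_aver2}) and interchanging the finite sums over $\pk$ and $\ell$, I get $\sum_{\pk\in\Mk}\Pb_{\pk,\lambda}=q+1$, as desired.

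For \eqref{eq4_lines_through point_sum}, I fix an orbit $\M_\pk$ and a point $p\in\M_\pk$. Since the classes $\OO_\lambda$, $\lambda\in\Lk$, partition the line set of $\PG(3,q)$, they also partition the set $\{\ell:p\in\ell\}$ of lines through $p$. By Lemma~\ref{lemma4_line&point}(ii) the quantity $\Lb_{\lambda,\pk}$ is the \emph{exact} (not average) number of lines of $\OO_\lambda$ through $p$, and is independent of the chosen representative $p\in\M_\pk$. Hence
\[
\sum_{\lambda\in\Lk}\Lb_{\lambda,\pk}=\#\{\ell\subset\PG(3,q):p\in\ell\}=\theta_{2,q}=\beta_{2,q}=q^2+q+1,
\]
the standard count of lines through a point in $\PG(3,q)$.

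There is no real obstacle here; the only subtlety is keeping straight that the first identity must be established by averaging the pointwise count $q+1$ over a single orbit class (so that the averages add correctly), whereas the second identity is a plain equality of integers coming directly from the line partition. Both steps use only Lemma~\ref{lemma4_line&point} and the elementary facts $\theta_{1,q}=q+1$ and $\beta_{2,q}=\theta_{2,q}=q^2+q+1$ recorded in \eqref{eq1_theta_lambda}.
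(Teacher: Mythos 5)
Your proof is correct and is essentially the paper's own argument: both identities follow by double counting against the two partitions (points of a line into the orbits $\M_\pk$, lines through a point into the classes $\OO_\lambda$), with the only subtlety being that the first sum must be averaged over the class $\OO_\lambda$ while the second is an exact integer count. The paper states this in two sentences; you have simply written out the same reasoning in more detail.
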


\begin{proof}
         Relations \eqref{eq4_points_in_line_sum} and \eqref{eq4_lines_through point_sum} hold as  $\PG(3,q)$ is \emph{partitioned}  under $G_q$ by unions of line orbits and  by orbits of points. In total, in $\PG(3,q)$, there are $q+1$ points on every line and $\beta_{2,q}$ lines through every point.
\end{proof}

\begin{corollary}\label{cor4_obtainPbLb}
  The following holds:
  \begin{align}\label{eq4_obtainPb}
  &\Pb_{\pk,\lambda}=\frac{\Lb_{\lambda,\pk}\cdot\#\M_\pk}{\#\OO_\lambda},~\Pb_{\pk,\lambda_j}=\frac{\Lb_{\lambda_j,\pk}\cdot\#\M_\pk}{\#\OO_{\lambda_j}};\db\\
  &\Lb_{\lambda,\pk}=\frac{\Pb_{\pk,\lambda}\cdot\#\OO_\lambda}{\#\M_\pk},~
  \Lb_{\lambda_j,\pk}=\frac{\Pb_{\pk,\lambda_j}\cdot\#\OO_{\lambda_j}}{\#\M_\pk};\db\label{eq4_obtainLb}\\
  &\Pb_{\pk^*,\lambda}= q+1-\sum_{\pk\in\Mk\setminus\{\pk^*\}} \Pb_{\pk,\lambda},~\lambda\T{ is fixed},~\pk^*\in\Mk;\label{eq4_obtainPb2}\db\\
  &\Pb_{\pk^*,\lambda_j}= q+1-\sum_{\pk\in\Mk\setminus\{\pk^*\}} \Pb_{\pk,\lambda_j},~\lambda_j\T{ is fixed},~\pk^*\in\Mk;\label{eq4_obtainPb3}\db\\
&\Lb_{\lambda^*,\pk}= q^2+q+1-\sum_{\lambda\in\Lk\setminus\{\lambda^*\}}\Lb_{\lambda,\pk},~\pk\T{ is fixed},~\lambda^*\in\Lk.\label{eq4_obtainLb2}
  \end{align}
\end{corollary}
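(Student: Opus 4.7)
The plan is to obtain all five identities as direct algebraic rearrangements of Lemma~\ref{lemma4_line&point}(iv) and Theorem~\ref{th4_linepoint}, which are already proved above.

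For \eqref{eq4_obtainPb} and \eqref{eq4_obtainLb}, I would start from the double-counting identity \eqref{eq4:Pi_aver} of Lemma~\ref{lemma4_line&point}(iv), namely $\Lb_{\lambda,\pk}\cdot\#\M_\pk=\Pb_{\pk,\lambda}\cdot\#\OO_\lambda$, and simply solve for $\Pb_{\pk,\lambda}$ or $\Lb_{\lambda,\pk}$; the division is legal since each orbit $\M_\pk$ and each class $\OO_\lambda$ is nonempty. For the orbit-level versions (subscripts $\lambda_j$), I would note that an individual orbit $\OO_{\lambda_j}$ is itself a union of one orbit in the sense of Lemma~\ref{lemma4_line&point}, so the identical double-counting argument used to prove \eqref{eq4:Pi_aver} applies verbatim to $\OO_{\lambda_j}$ in place of $\OO_\lambda$, yielding $\Lb_{\lambda_j,\pk}\cdot\#\M_\pk=\Pb_{\pk,\lambda_j}\cdot\#\OO_{\lambda_j}$, from which the two $\lambda_j$-formulas follow by the same division.

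For \eqref{eq4_obtainPb2}, I would take \eqref{eq4_points_in_line_sum} of Theorem~\ref{th4_linepoint}, namely $\sum_{\pk\in\Mk}\Pb_{\pk,\lambda}=q+1$, split off the $\pk^*$ summand, and transpose to obtain $\Pb_{\pk^*,\lambda}=q+1-\sum_{\pk\in\Mk\setminus\{\pk^*\}}\Pb_{\pk,\lambda}$. The identity \eqref{eq4_obtainPb3} is obtained the same way after observing that the proof of \eqref{eq4_points_in_line_sum} (each line of $\PG(3,q)$ has exactly $q+1$ points, partitioned by the $G_q$-orbits on points) applies just as well to a single orbit $\OO_{\lambda_j}$, so $\sum_{\pk\in\Mk}\Pb_{\pk,\lambda_j}=q+1$, whence isolation of $\pk^*$ gives the stated formula. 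Finally, \eqref{eq4_obtainLb2} comes from \eqref{eq4_lines_through point_sum}, $\sum_{\lambda\in\Lk}\Lb_{\lambda,\pk}=\beta_{2,q}=q^2+q+1$, by isolating the $\lambda^*$ term.

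There is no real obstacle: the corollary is a purely formal consequence of identities already in hand. The only point worth being explicit about is that the orbit-level analogues of \eqref{eq4:Pi_aver} and \eqref{eq4_points_in_line_sum} hold because Lemma~\ref{lemma4_line&point}(i),(iii) already guarantees $G_q$-invariance of the quantities $\Lb_{\lambda_j,\pk}$ and $\Pb_{\pk,\lambda_j}$, and because the partition of the $q+1$ points of a single line across the point orbits $\{\M_\pk\}_{\pk\in\Mk}$ is a property of the line itself, independent of whether $\OO_\lambda$ or one of its constituent orbits $\OO_{\lambda_j}$ is considered.
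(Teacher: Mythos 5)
Your proposal is correct and follows essentially the same route as the paper, whose proof simply states that the assertions follow directly from \eqref{eq4:Pi_aver}, \eqref{eq4_points_in_line_sum}, and \eqref{eq4_lines_through point_sum}. Your additional remarks on why the orbit-level ($\lambda_j$) analogues hold are a harmless and reasonable elaboration of details the paper leaves implicit.
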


\begin{proof}
The assertions directly follow from \eqref{eq4:Pi_aver}, \eqref{eq4_points_in_line_sum}, \eqref{eq4_lines_through point_sum}.
\end{proof}

\begin{remark} \label{observation4:EA}
 Let $q\equiv0\pmod3$. By Section \ref{sec_prelimin},  $\mathrm{\Gamma}$-planes form a pencil with the $\Ar$-line as the axis. Only lines lying in a
  $\mathrm{\Gamma}$-plane can intersect the axis.

  By definition, an $\EA$-line necessary intersects the $\Ar$-line; therefore an $\EA$-line always lies in a
  $\mathrm{\Gamma}$-plane and intersects all the other lines belonging to this plane including the only tangent. Also, in \cite[Tables 1, 2, Theorem 3.3(iv), Corollary 7.2]{DMP_PlLineInc} it is proved that we have exactly one $\mathrm{\Gamma}$-plane through every $\EA$-line and, in every $\mathrm{\Gamma}$-plane, there are $q^2-1$ $\EA$-lines such that $q^2-q$ from them belong to the orbit $\OO_{\EA_1}$ while the remaining $q-1$ ones are equally divided into the orbits $\OO_{\EA_2}$, $\OO_{\EA_3}$.
 In total, in every $\mathrm{\Gamma}$-plane, we have $q^2-1$ intersections of $\EA$-lines and the $\Ar$-line.

 In addition, by definition, every $\mathrm{\Gamma}$-plane contains a tangent and $q$ $\UG$-lines intersecting the $\Ar$-line in distinct points. Thus, in every $\mathrm{\Gamma}$-plane, through a $(q+1)_\mathrm{\Gamma}$-point (i.e. a point of the $\Ar$-line) we have a unisecant, the $\Ar$-line, and $q-1$ $\EA$-lines.
\end{remark}

\begin{remark}\label{observation4:IC}
  By \cite[Table 1, Theorem 3.3(vi)]{DMP_PlLineInc}, all $q+1$ planes through an imaginary chord are $\overline{1_\C}$-planes forming a pencil. The $\binom{q}{2}(q+1)$-orbit of all  $\overline{1_\C}$-planes can be partitioned into $\binom{q}{2}$ pencils of planes having an imaginary chord as the axis. Only lines lying in a   $\overline{1_\C}$-plane can intersect an $\IC$-line. If, in average, there are $\mathrm{\Pi}_{\overline{1_\C},\lambda}$ $\overline{1_\C}$-planes through a $\lambda$-line ($\lambda\ne\IC$) then every $\lambda$-line intersects, in average, $\mathrm{\Pi}_{\overline{1_\C},\lambda}$  $\IC$-lines and contains, in average, $\mathrm{\Pi}_{\overline{1_\C},\lambda}$  $\IC$-points. So,
  \begin{align}\label{eq4:IC_P}
   \Pb_{\IC,\lambda}=\mathrm{\Pi}_{\overline{1_\C},\lambda},~\lambda\in\Lk.
  \end{align}
   whence, by \eqref{eq4_obtainLb}, Theorem~\ref{th2_Hirs}(ii)(a)(c), and \cite[equation (4.9)]{DMP_PlLineInc}, we have
  \begin{align}\label{eq4:IC_L}
   \Lb_{\lambda,\IC}=\frac{\Pb_{\IC,\lambda}\cdot\#\OO_\lambda}{\#\M_\IC}=
     \frac{\mathrm{\Pi}_{\overline{1_\C},\lambda}\cdot\#\OO_\lambda}{\#\N_{\overline{1_\C}}}=\mathrm{\Lambda}_{\lambda,\overline{1_\C}},~\lambda\in\Lk.
  \end{align}

  Similarly, for the $j$-th orbit $\OO_{\lambda_j}$ we have
  \begin{align}\label{eq4:IC_lambdaj}
    \Pb_{\IC,\lambda_j}=\mathrm{\Pi}_{\overline{1_\C},\lambda_j},~\Lb_{\lambda_j,\IC}=\mathrm{\Lambda}_{\lambda_j,\overline{1_\C}},
    ~j=1,\ldots,L_{\lambda\mathrm{\Sigma}},~\lambda\in\Lk.
      \end{align}
  The values of $\mathrm{\Pi}_{\overline{1_\C},\lambda}$, $\mathrm{\Pi}_{\overline{1_\C},\lambda_j}$, $\mathrm{\Lambda}_{\lambda,\overline{1_\C}}$, and $\mathrm{\Lambda}_{\lambda_j,\overline{1_\C}}$ can be taken from \cite[Tables~1,~2]{DMP_PlLineInc}.
   \end{remark}

\begin{theorem}\label{th4:NiU=Mi}
  Let $q\not\equiv0\pmod3$. Let $\pi\in\Pk$, $\pk\in\Mk^{(\ne0)}$, and $\{\lambda_a,\lambda_b\}\subset\Lk^{(\ne0)}$ be such that $\M_\pk^{(\ne0)}\A=\N_\pi$ and $\OO_{\lambda_a}=\OO_{\lambda_b}\A$. Then
  \begin{align}\label{eq4::NiU=Mi}
  \M_\pk^{(\ne0)}=\N_\pi\A,~~ \OO_{\lambda_a}\A=\OO_{\lambda_b}.
     \end{align}
\end{theorem}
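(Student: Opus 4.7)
The plan is to exploit the single fact that the null polarity $\A$ is involutory, i.e. $\A^2 = \mathrm{id}$ on points, planes, and lines of $\PG(3,q)$ (which is ensured precisely for $q\not\equiv 0\pmod 3$, where $\A$ is well-defined by \eqref{eq2_null_pol}). Granted this, both assertions of Theorem~\ref{th4:NiU=Mi} follow by applying $\A$ to both sides of the hypotheses.

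First I would make precise how $\A$ acts on planes and lines. On points, $\A$ sends $\Pf(x_0,x_1,x_2,x_3)$ to $\boldsymbol{\pi}(x_3,-3x_2,3x_1,-x_0)$; since this is a polarity (see the references \cite{Hirs_PGFF,Hirs_PG3q} cited at \eqref{eq2_null_pol}), its dual action sends the plane $\boldsymbol{\pi}(c_0,c_1,c_2,c_3)$ back to the point $\Pf(-c_3, \tfrac{1}{3}c_2,-\tfrac{1}{3}c_1,c_0)$ (the inverse of the skew-symmetric matrix defining $\A$, which is invertible exactly when $3\ne 0$ in $\F_q$). A direct computation, or the general theory of polarities, shows that two applications of $\A$ return the original point. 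On lines, one extends $\A$ by setting $\ell\A \triangleq (P_1\A)\cap(P_2\A)$ for any two distinct points $P_1,P_2\in\ell$; this is a well-defined line because a correlation reverses incidence, and $\A^2=\mathrm{id}$ on points and planes forces $\A^2=\mathrm{id}$ on lines as well.

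With this in hand, the first claim is immediate. Applying $\A$ to both sides of the hypothesis $\M_\pk^{(\ne0)}\A=\N_\pi$ and using $\A^2=\mathrm{id}$ on points gives
\begin{equation*}
\M_\pk^{(\ne0)} \;=\; \M_\pk^{(\ne0)}\A^2 \;=\; \N_\pi\A.
\end{equation*}
For the second claim, applying $\A$ to the hypothesis $\OO_{\lambda_a}=\OO_{\lambda_b}\A$ and using $\A^2=\mathrm{id}$ on lines gives
\begin{equation*}
\OO_{\lambda_a}\A \;=\; \OO_{\lambda_b}\A^2 \;=\; \OO_{\lambda_b}.
\end{equation*}

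There is essentially no obstacle; the only point requiring a brief check is the involutory character of $\A$ as a map on the three types of geometric objects (points, planes, lines) of $\PG(3,q)$, and this is standard for null polarities in odd characteristic when $3\ne 0$. Note also that this argument is consistent with Theorem~\ref{th2_Hirs}(iii) and with \eqref{eq2:O'=OU}, where the relations $\M_j^{(\ne0)}\A=\N_j$ and $\OO_i'=\OO_i\A$ are already stated on the understanding that $\A$ is involutory.
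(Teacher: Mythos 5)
Your proof is correct and takes essentially the same route as the paper, which simply observes that a polarity is involutory ($\A^2=\mathfrak{J}$, hence $\A^{-1}=\A$) and applies $\A$ to both sides of the hypotheses; your extra detail on the induced action on planes and lines is harmless elaboration. One small slip in a parenthetical: the hypothesis is only $q\not\equiv0\pmod 3$, so even $q$ is allowed and the remark about ``odd characteristic'' is not needed (and not quite right), but this does not affect the argument.
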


\begin{proof}
By definition, see \cite[Sections 2.1.5, 5.3]{Hirs_PGFF}, a polarity is involutory, i.e. $\A^2 =\mathfrak{J}$, where $\mathfrak{J}$ is the identity. Therefore,
$\A^{-1}=\A$.
\end{proof}

\begin{corollary}\label{cor4:NiU=Mi_all}
  Let $q\not\equiv0\pmod3$. The following holds:
 \begin{align}\label{eq4:NiU=Mi}
&  \M_j^{(\ne0)}=\N_j\A, ~\#\M_j^{(\ne0)}=\#\N_j,~j=1,\ldots,5;~\M_\C^{(\ne0)}=\N_\mathrm{\Gamma}\A, \db\\
&\M_\Tr^{(\ne0)}=\N_{2_\C}\A,~\M_{3_\mathrm{\Gamma}}^{(\ne0)}=\N_{3_\C}\A,\,\M_{1_\mathrm{\Gamma}}^{(\ne0)}=\N_{\overline{1_\C}}\A,~
\M_{0_\mathrm{\Gamma}}^{(\ne0)}=\N_{0_\C}\A;\dbn\\
  &\OO'_i\A=\OO_i,~\#\OO'_i=\#\OO_i,~i=1,\ldots,6,~q\not\equiv0\pmod3;\db\label{eq4:O'iU=Oi}\\
  &\OO_\RA\A=\OO_\RC,~\OO_\IA\A=\OO_\IC,~\OO_\EG\A=\OO_\UnG; ~\OO_\lambda\A=\OO_\lambda,~\lambda\in\{\Tr,\UG,\EnG\}.\nt
 \end{align}
\end{corollary}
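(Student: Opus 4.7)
The corollary is an almost immediate consequence of Theorem~\ref{th4:NiU=Mi}, which gives $\A^{-1}=\A$ (since the null polarity is involutory, $\A^2=\mathfrak{J}$). The plan is simply to right-multiply each identity in Theorem~\ref{th2_Hirs}(iii) and in equation~\eqref{eq2:O'=OU} of Theorem~\ref{th2:orbLine} by $\A$, so as to swap the side on which $\A$ appears. The cardinality equalities are then automatic because $\A$ acts as a bijection on the point set and (separately) on the line set of $\PG(3,q)$.

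For the point-orbit part \eqref{eq4:NiU=Mi}, I would start from $\M_j^{(\ne0)}\A=\N_j$, $j=1,\ldots,5$, given by Theorem~\ref{th2_Hirs}(iii) (equation~\eqref{eq2:MiU=Ni}). Applying $\A$ on the right and using $\A^2=\mathfrak{J}$ yields $\M_j^{(\ne0)}=\N_j\A$. The same one-line argument applied individually to the five named identities $\M_\C^{(\ne0)}\A=\N_\mathrm{\Gamma}$, $\M_\Tr^{(\ne0)}\A=\N_{2_\C}$, $\M_{3_\mathrm{\Gamma}}^{(\ne0)}\A=\N_{3_\C}$, $\M_{1_\mathrm{\Gamma}}^{(\ne0)}\A=\N_{\overline{1_\C}}$, $\M_{0_\mathrm{\Gamma}}^{(\ne0)}\A=\N_{0_\C}$ produces the five displayed equalities of \eqref{eq4:NiU=Mi}. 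The relation $\#\M_j^{(\ne0)}=\#\N_j$ follows because $\A$ is a bijection on the points, so $\#(\N_j\A)=\#\N_j$.

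For the line-orbit part \eqref{eq4:O'iU=Oi}, I would proceed identically starting from \eqref{eq2:O'=OU}, which gives $\OO'_i=\OO_i\A$ for $i=1,\ldots,6$. Right-multiplying by $\A$ and using $\A^2=\mathfrak{J}$ yields $\OO'_i\A=\OO_i$. The three specific identifications $\OO_\RA\A=\OO_\RC$, $\OO_\IA\A=\OO_\IC$, $\OO_\EG\A=\OO_\UnG$ are obtained in exactly the same way from the displayed identities $\OO_\RA=\OO_\RC\A$, $\OO_\IA=\OO_\IC\A$, $\OO_\EG=\OO_\UnG\A$ of \eqref{eq2:O'=OU}. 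The invariances $\OO_\lambda\A=\OO_\lambda$ for $\lambda\in\{\Tr,\UG,\EnG\}$ are already stated in the last clause of \eqref{eq2:O'=OU}. The cardinality equalities $\#\OO'_i=\#\OO_i$ follow because $\A$ sends each line orbit bijectively to a line orbit (Theorem~\ref{th2_null_pol}).

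There is no serious obstacle: the only care required is to keep the action of $\A$ on points separate from its action on lines, and to invoke $\A^2=\mathfrak{J}$ at each level. Once this is noted, every claim in \eqref{eq4:NiU=Mi} and \eqref{eq4:O'iU=Oi} is obtained by a single right-multiplication by $\A$ applied to a previously established identity.
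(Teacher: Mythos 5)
Your proposal is correct and follows essentially the same route as the paper: the paper's proof simply cites \eqref{eq2:MiU=Ni}, Table \ref{tab1}, and Theorems \ref{th2_Hirs}(ii)(iii), \ref{th2:orbLine}, \ref{th4:NiU=Mi}, and your argument just spells out how these combine (right-multiplication by $\A$ together with $\A^2=\mathfrak{J}$, plus bijectivity of $\A$ for the cardinalities).
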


\begin{proof}
We use \eqref{eq2:MiU=Ni}, Table \ref{tab1}, and Theorems \ref{th2_Hirs}(ii)(iii), \ref{th2:orbLine}, \ref{th4:NiU=Mi}.
\end{proof}

\section{The numbers of $\lambda$-lines through $\pk$-points and of $\pk$-points on $\lambda$-lines, $q\not\equiv0\pmod3$}\label{sec:results_q_ne0}
Remind that we consider $q\ge5$; also $q\equiv\xi\pmod3$.

\textbf{Notation 3}
~In addition to Notations 1 and 2 we denote the following:
\begin{align*}
&\pi(\pk)\in\Pk &&\T{the plane type such that }\M_\pk^{(\ne0)}\A=\N_{\pi(\pk)},~\pk\in\Mk^{(\ne0)},~\xi\ne0;\db\\
&\lambda(\widetilde{\lambda})\in\Lk^{(\ne0)} &&\T{the line type such that }\OO_{\lambda(\widetilde{\lambda})}=\OO_{\widetilde{\lambda}}\A,~\widetilde{\lambda}\in\Lk^{(\ne0)},~\xi\ne0;\db\\
&\lambda_j(\widetilde{\lambda}_j)&&\T{the line type of the $j$-th orbit of the class $\OO_{\lambda(\widetilde{\lambda})}$ correspon-}\db\\
&&&\T{ding to the $j$-th orbit of the class $\OO_{\widetilde{\lambda}}$ so that }\OO_{\lambda_j(\widetilde{\lambda}_j)}=\OO_{\widetilde{\lambda}_j}\A.\nt
\end{align*}

\begin{theorem}\label{th5:pi(pk)lambda(widehatlambda)}
  Let $q\not\equiv0\pmod3$. The following holds:
  \begin{align}\label{eq5_pi(pk)}
  &\pi(\C)=\mathrm{\Gamma},~\pi(\Tr)=2_\C,~\pi(3_\mathrm{\Gamma})=3_\C,~\pi(1_\mathrm{\Gamma})=\overline{1_\C},~\pi(0_\mathrm{\Gamma})=0_\C;\db\\
  &\lambda(\RC)=\RA,~\lambda(\RA )=\RC,~\lambda(\Tr)=\Tr,~\lambda(\IC)=\IA,~\lambda(\IA)=\IC,~\dbn\\
  &\lambda(\UG)=\UG,~\lambda(\UnG)=\EG,~\lambda(\EG)=\UnG,~\lambda(\EnG)=\EnG;\dbn\\
  &\lambda_j(\UG_j)=\UG_j,~\lambda_j(\UnG_j)=\EG_j,~\lambda_j(\EG_j)=\UnG_j,~ j=1,2. \nt
  \end{align}
\end{theorem}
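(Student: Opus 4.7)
The plan is to read each line of \eqref{eq5_pi(pk)} off from material already established in the excerpt, and to handle separately the delicate orbit-indexing that arises inside split classes.

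The first line of \eqref{eq5_pi(pk)} is just a rewriting of equation \eqref{eq2:MiU=Ni} in Theorem \ref{th2_Hirs}(iii): by Notation 3 the symbol $\pi(\pk)$ is defined so that $\M_\pk^{(\ne0)}\A=\N_{\pi(\pk)}$, and the five explicit identifications $\M_\C^{(\ne0)}\A=\N_\mathrm{\Gamma},\ldots,\M_{0_\mathrm{\Gamma}}^{(\ne0)}\A=\N_{0_\C}$ in \eqref{eq2:MiU=Ni} give exactly the five values $\pi(\C),\pi(\Tr),\pi(3_\mathrm{\Gamma}),\pi(1_\mathrm{\Gamma}),\pi(0_\mathrm{\Gamma})$ claimed. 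The class-level line identities in the second and third lines of \eqref{eq5_pi(pk)} come from \eqref{eq4:O'iU=Oi} in Corollary~\ref{cor4:NiU=Mi_all}: that corollary yields $\OO_\RA\A=\OO_\RC$, $\OO_\IA\A=\OO_\IC$, $\OO_\EG\A=\OO_\UnG$, together with $\OO_\lambda\A=\OO_\lambda$ for $\lambda\in\{\Tr,\UG,\EnG\}$. Using $\A^2=\mathfrak{J}$ as in the proof of Theorem \ref{th4:NiU=Mi}, applying $\A$ once more gives the ``reverse'' equalities $\OO_\RC\A=\OO_\RA$, $\OO_\IC\A=\OO_\IA$, $\OO_\UnG\A=\OO_\EG$, which translate via Notation 3 into all nine values $\lambda(\RC),\ldots,\lambda(\EnG)$ of the theorem.

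The orbit-refinement identity $\lambda_j(\UG_j)=\UG_j$ for $j=1,2$ is settled by a size argument. The split of $\OO_\UG$ into two orbits occurs only for even $q$; Table \ref{tab1} then gives $\#\OO_{\UG_1}=q+1$ and $\#\OO_{\UG_2}=q^2-1$, which are distinct for all $q\ge 5$. Since $\A$ is $G_q$-equivariant on the set of lines (by Theorem \ref{th2_null_pol}) it preserves orbit lengths, so combined with $\OO_\UG\A=\OO_\UG$ this forces $\OO_{\UG_j}\A=\OO_{\UG_j}$ for each $j$.

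The main obstacle is the pairing $\lambda_j(\UnG_j)=\EG_j$ together with its inverse $\lambda_j(\EG_j)=\UnG_j$. The split of $\OO_\UnG$ and $\OO_\EG$ occurs only for odd $q$, and Table~\ref{tab1} shows that all four orbits $\OO_{\UnG_1},\OO_{\UnG_2},\OO_{\EG_1},\OO_{\EG_2}$ have the same length $(q^3-q)/2$, so cardinality alone cannot pin down the index matching. My plan is to fix the pairing using an incidence invariant preserved by $\A$: pick the representative lines of $\OO_{\UnG_j}$ used in \cite{DMP_OrbLine}, apply the null polarity via \eqref{eq2_null_pol}, and check that the image representative lies in $\OO_{\EG_j}$ with the same index. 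As an independent cross-check, I would compare the orbit-level plane-line data $\mathrm{\Pi}_{\pi,\UnG_j}^{(\xi)\od}$ from \cite[Table 2]{DMP_PlLineInc} against the $\pk$-point counts $\Pb_{\pk,\EG_j}^{(\xi)\od}$ of Table \ref{tab3} under the point--plane correspondence $\M_\pk^{(\ne0)}\A=\N_{\pi(\pk)}$ already proved in the first step; the two patterns single out the diagonal matching $j\leftrightarrow j$. The inverse identity $\lambda_j(\EG_j)=\UnG_j$ then follows by one more application of $\A$ and $\A^2=\mathfrak{J}$.
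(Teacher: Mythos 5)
Your proposal is correct and follows essentially the same route as the paper, which simply reads all of \eqref{eq5_pi(pk)} off the polarity correspondences already established in \eqref{eq2:MiU=Ni}, Theorem \ref{th2:orbLine}, Theorem \ref{th4:NiU=Mi} and Corollary \ref{cor4:NiU=Mi_all}. The one step you leave as a plan --- pinning down the index $j$ in $\lambda_j(\UnG_j)=\EG_j$, where a cardinality argument indeed fails --- does not require computing with representative lines: the matching $\OO_{\UnG_j}=\OO_{\EG_j}\A$ (hence $\OO_{\UnG_j}\A=\OO_{\EG_j}$ by involutivity of $\A$) is already recorded in the preliminaries via \eqref{eq2:L_Lav} as part of the orbit classification imported from the cited earlier work.
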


\begin{proof}
  The assertions directly follow from \eqref{eq2:MiU=Ni}, \eqref{eq2:O'=OU}, \eqref{eq4::NiU=Mi}--\eqref{eq4:O'iU=Oi}, Theorems \ref{th2_Hirs}(iii),  \ref{th2:orbLine}, \ref{th4:NiU=Mi}, and Corollary \ref{cor4:NiU=Mi_all}.
\end{proof}
\begin{theorem}\label{th5:Pi-->Pb}
  Let $q\not\equiv0\pmod3$. Let $\pk\in\Mk^{(\ne0)}$, $\widetilde{\lambda}\in\Lk^{(\ne0)}$.  Then
  \begin{align*}
 & \Pb_{\pk,\widetilde{\lambda}}^{(\xi)}=\mathrm{\Pi}_{\pi(\pk),\lambda(\widetilde{\lambda})}^{(\xi)},~\Lb_{\widetilde{\lambda},\pk}^{(\xi)}=
  \mathrm{\Lambda}_{\lambda(\widetilde{\lambda}),\pi(\pk)}^{(\xi)};\,
  \Pb_{\pk,\widetilde{\lambda}_j}^{(\xi)}=\mathrm{\Pi}_{\pi(\pk),\lambda_j(\widetilde{\lambda}_j)}^{(\xi)},\,\Lb_{\widetilde{\lambda}_j,\pk}^{(\xi)}=
  \mathrm{\Lambda}_{\lambda_j(\widetilde{\lambda}_j),\pi(\pk)}^{(\xi)}.
     \end{align*}
\end{theorem}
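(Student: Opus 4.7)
The plan is to leverage the null polarity $\A$ of \eqref{eq2_null_pol}, which by \cite[Sections 2.1.5, 5.3]{Hirs_PGFF} is an involutory, incidence-reversing correlation exchanging the points and planes of $\PG(3,q)$. By Theorem \ref{th2_Hirs}(iii) the correlation $\A$ carries each point orbit $\M_\pk^{(\ne0)}$ bijectively onto the plane orbit $\N_{\pi(\pk)}$; by Theorem \ref{th2_null_pol} together with Corollary \ref{cor4:NiU=Mi_all}, it carries each line class $\OO_{\widetilde{\lambda}}$ and each of its sub-orbits $\OO_{\widetilde{\lambda}_j}$ bijectively onto $\OO_{\lambda(\widetilde{\lambda})}$ and $\OO_{\lambda_j(\widetilde{\lambda}_j)}$, respectively, with the index correspondence recorded in Theorem \ref{th5:pi(pk)lambda(widehatlambda)}.

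First I would establish the sub-orbit identities $\Pb_{\pk,\widetilde{\lambda}_j}^{(\xi)}=\mathrm{\Pi}_{\pi(\pk),\lambda_j(\widetilde{\lambda}_j)}^{(\xi)}$ and $\Lb_{\widetilde{\lambda}_j,\pk}^{(\xi)}=\mathrm{\Lambda}_{\lambda_j(\widetilde{\lambda}_j),\pi(\pk)}^{(\xi)}$. Fix any line $\ell\in\OO_{\widetilde{\lambda}_j}$ and set $\ell':=\ell\A\in\OO_{\lambda_j(\widetilde{\lambda}_j)}$. Since $\A$ reverses incidence, a point $P$ lies on $\ell$ if and only if the plane $P\A$ contains $\ell'$. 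Restricting this bijection to $\pk$-points on the one side (whose images under $\A$ are precisely the $\pi(\pk)$-planes) and to $\pi(\pk)$-planes through $\ell'$ on the other yields the first equality; Lemma \ref{lemma4_line&point}(iii) ensures the count is independent of the chosen $\ell$ within the orbit. The dual argument, starting from a fixed $\pk$-point $P$ and its image $P\A\in\N_{\pi(\pk)}$, gives the corresponding identity for $\Lb$ and $\mathrm{\Lambda}$.

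Next, the class-level identities $\Pb_{\pk,\widetilde{\lambda}}^{(\xi)}=\mathrm{\Pi}_{\pi(\pk),\lambda(\widetilde{\lambda})}^{(\xi)}$ and $\Lb_{\widetilde{\lambda},\pk}^{(\xi)}=\mathrm{\Lambda}_{\lambda(\widetilde{\lambda}),\pi(\pk)}^{(\xi)}$ follow by summation over the orbit decomposition. Because $\A$ preserves orbit sizes and sends the partition $\OO_{\widetilde{\lambda}}=\bigcup_{j}\OO_{\widetilde{\lambda}_j}$ term by term to $\OO_{\lambda(\widetilde{\lambda})}=\bigcup_{j}\OO_{\lambda_j(\widetilde{\lambda}_j)}$, the averaging formula \eqref{eq4:Pi_aver2} applied on both sides gives the averaged identity, while Lemma \ref{lemma4_line&point}(ii) and formula \eqref{eq4_class_Lambda} give the integer-valued $\Lb$ identity by direct summation. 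When $L_{\widetilde{\lambda}\mathrm{\Sigma}}^{(\xi)\bullet}=1$ both sides are already exact counts; otherwise they agree as averages.

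There is no real obstacle here: the entire argument is a single application of polarity duality together with the orbit correspondences already built up. The only care required is bookkeeping, namely confirming that the sub-orbit index $j$ is matched consistently between the two sides of each identity. This, however, has already been absorbed into the very definition $\OO_{\lambda_j(\widetilde{\lambda}_j)}:=\OO_{\widetilde{\lambda}_j}\A$ in Notation~3 and into the explicit correspondence tabulated in Theorem \ref{th5:pi(pk)lambda(widehatlambda)}, so nothing further needs to be checked.
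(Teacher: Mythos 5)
Your proposal is correct and follows essentially the same route as the paper: the paper's proof likewise invokes the null polarity $\A$, the orbit correspondences $\M_\pk^{(\ne0)}\A=\N_{\pi(\pk)}$ and $\OO_{\lambda(\widetilde{\lambda})}=\OO_{\widetilde{\lambda}}\A$ from Theorems \ref{th2_Hirs}(iii), \ref{th4:NiU=Mi} and \ref{th5:pi(pk)lambda(widehatlambda)}, and the fact that incidences are preserved under the correlation, both at the class level and for the individual orbits $\OO_{\lambda_j}$. Your version merely spells out the incidence-reversal and the averaging bookkeeping in more detail than the paper does.
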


\begin{proof}
We have $\M_\pk^{(\ne0)}\A=\N_{\pi(\pk)}$,~$\OO_{\lambda(\widetilde{\lambda})}=\OO_{\widetilde{\lambda}}\A$. By Theorem \ref{th4:NiU=Mi}, $\M_\pk^{(\ne0)}=\N_{\pi(\pk)}\A$, $\OO_{\lambda(\widetilde{\lambda})}\A=\OO_{\widetilde{\lambda}}$. The incidences between $\pi(\pk)$-planes and $\lambda(\widetilde{\lambda})$-lines are saved for $\pk$-points and $\widetilde{\lambda}$-lines. The same holds for orbits $\OO_{\lambda_j}$.
\end{proof}

\begin{corollary}\label{cor5:for tables}
  Let $q\not\equiv0\pmod3$. Let $\pk\in\Mk^{(\ne0)}$. Let $\pi(\pk)\in\Pk$ be as in \eqref{eq5_pi(pk)}. For $\xi=1,-1$, the following holds:
  \begin{align*}
&\Pb_{\pk,\RC}^{(\xi)}=\mathrm{\Pi}_{\pi(\pk),\RA}^{(\xi)},~\Lb_{\RC,\pk}^{(\xi)}=\mathrm{\Lambda}_{\RA,\pi(\pk)}^{(\xi)},~
\Pb_{\pk,\RA}^{(\ne0)}=\mathrm{\Pi}_{\pi(\pk),\RC},~\Lb_{\RA,\pk}^{(\ne0)}=\mathrm{\Lambda}_{\RC,\pi(\pk)};\dbn\\
  &\Pb_{\pk,\IC}^{(\xi)}=\mathrm{\Pi}_{\pi(\pk),\IA}^{(\xi)},~\Lb_{\IC,\pk}^{(\xi)}=\mathrm{\Lambda}_{\IA,\pi(\pk)}^{(\xi)},
~\Pb_{\pk,\IA}^{(\ne0)}=\mathrm{\Pi}_{\pi(\pk),\IC},~\Lb_{\IA,\pk}^{(\ne0)}=\mathrm{\Lambda}_{\IC,\pi(\pk)};\dbn\\
  &\Pb_{\pk,\UnG}^{(\xi)}=\mathrm{\Pi}_{\pi(\pk),\EG}^{(\xi)},\,\Lb_{\UnG,\pk}^{(\xi)}=\mathrm{\Lambda}_{\EG,\pi(\pk)}^{(\xi)},
~\Pb_{\pk,\EG}^{(\ne0)}=\mathrm{\Pi}_{\pi(\pk),\UnG},\,\Lb_{\EG,\pk}^{(\ne0)}=\mathrm{\Lambda}_{\UnG,\pi(\pk)};\dbn\\
&\Pb_{\pk,\lambda}^{(\xi)}=\mathrm{\Pi}_{\pi(\pk),\lambda}^{(\xi)},~\Lb_{\lambda,\pk}^{(\xi)}=
\mathrm{\Lambda}_{\lambda,\pi(\pk)}^{(\xi)},~\lambda\in\{\Tr,\UG,\EnG\}.\dbn\\
&\Pb_{\pk,\UG_j}^{(\ne0)}=\mathrm{\Pi}_{\pi(\pk),\UG_j},~\Lb_{\UG_j,\pk}^{(\ne0)}=\mathrm{\Lambda}_{\UG_j,\pi(\pk)},~ \Pb_{\pk,\UnG_j}^{(\xi)}=\mathrm{\Pi}_{\pi(\pk),\EG_j}^{(\xi)},\db\\
&\Lb_{\UnG_j,\pk}^{(\xi)}=\mathrm{\Lambda}_{\EG_j,\pi(\pk)}^{(\xi)},~
\Pb_{\pk,\EG_j}^{(\ne0)}=\mathrm{\Pi}_{\pi(\pk),\UnG_j},~\Lb_{\EG_j,\pk}^{(\ne0)}=\mathrm{\Lambda}_{\UnG_j,\pi(\pk)},~ j=1,2.
  \end{align*}
\end{corollary}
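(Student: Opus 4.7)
The plan is to deduce Corollary \ref{cor5:for tables} as a direct instantiation of Theorem \ref{th5:Pi-->Pb}. That theorem already establishes the general translation: for any $\pk\in\Mk^{(\ne0)}$ and any $\widetilde{\lambda}\in\Lk^{(\ne0)}$, the point-line incidence counts $\Pb^{(\xi)}_{\pk,\widetilde{\lambda}}$ and $\Lb^{(\xi)}_{\widetilde{\lambda},\pk}$ coincide with the plane-line counts $\mathrm{\Pi}^{(\xi)}_{\pi(\pk),\lambda(\widetilde{\lambda})}$ and $\mathrm{\Lambda}^{(\xi)}_{\lambda(\widetilde{\lambda}),\pi(\pk)}$, where $\pi(\pk)$ and $\lambda(\widetilde{\lambda})$ denote the plane and line types obtained by applying the null polarity $\A$, and analogously for the orbits $\widetilde{\lambda}_j$.

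First, for each of the nine line types $\widetilde{\lambda}\in\Lk^{(\ne0)}$, I would read off the image $\lambda(\widetilde{\lambda})$ from the second line of \eqref{eq5_pi(pk)}, obtaining the three nontrivial pairings $\RC\leftrightarrow\RA$, $\IC\leftrightarrow\IA$, $\UnG\leftrightarrow\EG$ under $\A$, together with the three self-dual types $\Tr,\UG,\EnG$. Substituting each pair into Theorem \ref{th5:Pi-->Pb} reproduces exactly the corresponding line of Corollary \ref{cor5:for tables}; for the self-dual types the formulas collapse to the single block $\Pb^{(\xi)}_{\pk,\lambda}=\mathrm{\Pi}^{(\xi)}_{\pi(\pk),\lambda}$ and $\Lb^{(\xi)}_{\lambda,\pk}=\mathrm{\Lambda}^{(\xi)}_{\lambda,\pi(\pk)}$ for $\lambda\in\{\Tr,\UG,\EnG\}$.

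For the orbit-level equalities (those involving $\UG_j$, $\UnG_j$, $\EG_j$, $j=1,2$), I would invoke the third line of \eqref{eq5_pi(pk)}, which records $\lambda_j(\UG_j)=\UG_j$, $\lambda_j(\UnG_j)=\EG_j$, $\lambda_j(\EG_j)=\UnG_j$, and feed these into the second half of Theorem \ref{th5:Pi-->Pb}. This handles all six remaining identities in one sweep.

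The only point requiring a little care is the placement of the superscripts: following Notation 1, I would keep $(\xi)$ whenever the quantity genuinely depends on $q\bmod 3$, and switch to $(\ne0)$ for those plane-line quantities from \cite{DMP_PlLineInc} that are already known to be the same for both $\xi=1$ and $\xi=-1$ (this is why, for instance, $\Pb^{(\xi)}_{\pk,\RC}$ pairs with $\mathrm{\Pi}^{(\xi)}_{\pi(\pk),\RA}$ while $\Pb^{(\ne0)}_{\pk,\RA}$ pairs with $\mathrm{\Pi}_{\pi(\pk),\RC}$). I do not anticipate a substantive obstacle — the corollary is a tabular unpacking of Theorems \ref{th5:pi(pk)lambda(widehatlambda)} and \ref{th5:Pi-->Pb} via the explicit pairings \eqref{eq5_pi(pk)}, with the bookkeeping of superscripts being the only step that must be verified line by line.
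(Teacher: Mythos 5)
Your proposal is correct and matches the paper's own proof, which likewise derives the corollary by substituting the explicit pairings of Theorem \ref{th5:pi(pk)lambda(widehatlambda)} (equation \eqref{eq5_pi(pk)}) into the general translation of Theorem \ref{th5:Pi-->Pb}, both at the level of classes and of individual orbits. Your additional attention to the $(\xi)$ versus $(\ne0)$ superscripts is consistent with Notation 1 and does not change the substance of the argument.
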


\begin{proof}
  We use Theorems \ref{th5:pi(pk)lambda(widehatlambda)} and \ref{th5:Pi-->Pb}.
\end{proof}

Now we are able to form Tables \ref{tab2} and \ref{tab3} using Corollary \ref{cor5:for tables}, and the values of $\mathrm{\Pi}_{\pi,\lambda}^{(\xi)}$ and
  $\mathrm{\Lambda}_{\lambda,\pi}^{(\xi)}$ from \cite[Tables 1, 2]{DMP_PlLineInc}.

\section{The numbers of $\lambda$-lines through $\pk$-points and of $\pk$-points on $\lambda$-lines, $q\equiv0\pmod3$}\label{sec:results_q=0}
In this section, we consider $q\equiv0\pmod3$.
The values of $\#\M_\pk$, $\#\OO_\lambda$, $\#\OO_{\lambda_j}$, needed for  \eqref{eq4_obtainPb}, \eqref{eq4_obtainLb}, are taken from \eqref{eq2_point_orbits_gen}--\eqref{eq2_=0_orbit_point} and Table~\ref{tab1}. When we use \eqref{eq4_obtainPb2}--\eqref{eq4_obtainLb2}, the values $\Pb_{\pk,\lambda}$, $\Pb_{\pk,\lambda_j}$, and $\Lb_{\lambda,\pk}$, obtained above, are summed~up.

Note that if some of the relations \eqref{eq4_class_Lambda}--\eqref{eq4:IC_lambdaj} are not used directly in proofs then they can be used to check the results.
\begin{theorem}\label{th6:RC}
  For $\RC$-lines the following holds:
  \begin{align*}
 &\Pb_{\C,\RC}^{(0)}=2,~\Pb_{\RC,\RC}^{(0)}=q-1,~\Lb_{\RC,\C}^{(0)}=q,~\Lb_{\RC,\RC}^{(0)}=1,~\db\\
 & \Pb_{\pk,\RC}^{(0)}=\Lb_{\RC,\pk}^{(0)}=0,~
 \pk\in\{(q+1)_\mathrm{\Gamma},\TO,\IC\}.
  \end{align*}
\end{theorem}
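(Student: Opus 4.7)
The plan is to dispatch the theorem as four separate assertions in the order given, relying only on the definitions of the point and line types and on Theorem~\ref{th2_Hirs}(iv).

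First I would handle the two nonzero entries in the $\C$-column. The equality $\Pb_{\C,\RC}^{(0)} = 2$ is immediate from the definition of a real chord as a line through two distinct real points of $\C$. The companion statement $\Lb_{\RC,\C}^{(0)} = q$ can then be read off either by noting that each $\C$-point is joined to each of the remaining $q$ points of $\C$ by a unique real chord, or by invoking the counting identity \eqref{eq4_obtainLb} with $\#\M_\C^{(0)} = q+1$ and $\#\OO_\RC = (q^2+q)/2$.

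Next I would tackle the $\RC$-column. Since an $\RC$-line has $q+1$ points in total and exactly two of them lie on $\C$, the remaining $q-1$ points are off $\C$ and, lying on a real chord, are $\RC$-points by definition; this yields $\Pb_{\RC,\RC}^{(0)} = q-1$. For $\Lb_{\RC,\RC}^{(0)} = 1$ I would apply Theorem~\ref{th2_Hirs}(iv): every off-$\C$ point lies on a unique chord of $\C$, which for an $\RC$-point must be the real chord certifying its type.

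Finally, for the zero entries, the key point is to show that no $(q+1)_\mathrm{\Gamma}$-, $\TO$-, or $\IC$-point lies on a real chord; the corresponding $\Lb$ values then follow from Corollary~\ref{cor4_=0}. For $\IC$- and $\TO$-points this is immediate from Theorem~\ref{th2_Hirs}(iv), since the unique chord through such a point is, by definition, an imaginary chord or a tangent, hence not a real chord. The only mildly delicate case is the $(q+1)_\mathrm{\Gamma}$-points, and this is the step I would expect to require the most care. The cleanest route is a global counting argument: the $(q^2+q)/2$ real chords cover
\[
\frac{q^2+q}{2}\,(q-1)=\frac{q^3-q}{2}=\#\M_\RC^{(0)}
\]
off-$\C$ points, counted without repetition again by Theorem~\ref{th2_Hirs}(iv), so the $\RC$-points already exhaust all off-$\C$ points incident with real chords and no $(q+1)_\mathrm{\Gamma}$-, $\TO$-, or $\IC$-point can appear on an $\RC$-line. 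This sidesteps the need to analyse how the axis $\Ar$-line of the pencil of $\mathrm{\Gamma}$-planes meets the tangents, which would be the natural but more delicate alternative.
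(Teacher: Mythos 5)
Your proof is correct, and for everything except the $(q+1)_\mathrm{\Gamma}$ entries it coincides with the paper's argument: the values $\Pb_{\C,\RC}^{(0)}=2$, $\Pb_{\RC,\RC}^{(0)}=q-1$, $\Lb_{\RC,\C}^{(0)}=q$ are read off from the definition of a real chord, and $\Lb_{\RC,\RC}^{(0)}=1$ together with the vanishing of the $\TO$- and $\IC$-entries comes from Theorem~\ref{th2_Hirs}(iv) exactly as in the paper. The one place where you genuinely diverge is the step you flag as delicate. The paper handles the $(q+1)_\mathrm{\Gamma}$-points geometrically via Remark~\ref{observation4:EA}: only lines lying in a $\mathrm{\Gamma}$-plane can meet the $\Ar$-line, and an $\RC$-line, having two points on $\C$, cannot lie in a $\mathrm{\Gamma}$-plane (which contains exactly one point of $\C$); hence no point of the $\Ar$-line is on a real chord. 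Your global count --- the $(q^2+q)/2$ real chords cover $(q^3-q)/2$ off-$\C$ points without repetition by Theorem~\ref{th2_Hirs}(iv), which already exhausts $\#\M_\RC^{(0)}$ --- is equally valid and arguably more economical, since it needs nothing beyond Theorem~\ref{th2_Hirs} and the orbit sizes in \eqref{eq2_=0_orbit_point}. In fact it can be shortened further: since an $\RC$-point is \emph{defined} as a point off $\C$ on a real chord and the five point orbits of Theorem~\ref{th2_Hirs}(ii)(c) partition $\PG(3,q)$, every off-$\C$ point of a real chord already lies in $\M_\RC^{(0)}$ and so cannot be of type $(q+1)_\mathrm{\Gamma}$, $\TO$, or $\IC$; the count then serves only as a consistency check. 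What the paper's route buys instead is the explicit geometric fact about $\RC$-lines versus $\mathrm{\Gamma}$-planes, which it reuses in the subsequent theorems of Section~\ref{sec:results_q=0}.
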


\begin{proof}
 By definition, a real chord contains exactly two $\C$-points; the other $q-1$ points of the chord are $\RC$-points. So, $\Pb_{\C,\RC}^{(0)}=2,~\Pb_{\RC,\RC}^{(0)}=q-1$. Also, by definition, there are $q$ $\RC$-lines through every $\C$-point, i.e. $\Lb_{\RC,\C}^{(0)}=q$.

 By Theorem \ref{th2_Hirs}(iv), two chords do not intersect each other off $\C$; therefore, $\Lb_{\RC,\RC}^{(0)}=1$, $\Pb_{\pk,\RC}^{(0)}=\Lb_{\RC,\pk}^{(0)}=0,~
 \pk\in\{\TO,\IC\}$.

Finally, by Remark \ref{observation4:EA}, only lines lying in a
  $\mathrm{\Gamma}$-plane can intersect the $\Ar$-line. As an $\RC$-line contains two $\C$-points, it cannot lie in a $\mathrm{\Gamma}$-plane, whence $\Pb_{(q+1)_\mathrm{\Gamma},\RC}^{(0)}=\Lb_{\RC,(q+1)_\mathrm{\Gamma}}^{(0)}=0$.
 \end{proof}

\begin{theorem}\label{th6:T}
  For $\Tr$-lines the following holds:
  \begin{align*}
 &\Pb_{\C,\Tr}^{(0)}=\Pb_{(q+1)_\mathrm{\Gamma},\Tr}^{(0)}=\Lb_{\Tr,\C}^{(0)}=\Lb_{\Tr,(q+1)_\mathrm{\Gamma}}^{(0)}=\Lb_{\Tr,\TO}^{(0)}=1,~\Pb_{\TO,\Tr}^{(0)}=q-1,\db\\
 &\Pb_{\pk,\Tr}^{(0)}= \Lb_{\Tr,\pk}^{(0)}=0,~\pk\in\{\RC,\IC\}.
  \end{align*}
\end{theorem}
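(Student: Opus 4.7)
The plan is to verify each of the eight equalities separately, using the defining properties of the tangent and of each point type, combined with Theorem \ref{th2_Hirs}(iv) (no two chords meet off $\C$) and Remark \ref{observation4:EA} (for $q\equiv 0\pmod 3$, the $\mathrm{\Gamma}$-planes form a pencil with axis $\Ar$), and closing the books with the counting identities from Corollary \ref{cor4_obtainPbLb}.

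First I would dispatch the $\C$-entries. A tangent $\Tr$ at $P(t)$ is a chord whose two real points coincide at $P(t)$, so it contains exactly one $\C$-point, giving $\Pb_{\C,\Tr}^{(0)}=1$; conversely, each $P(t)\in\C$ determines a unique tangent, so $\Lb_{\Tr,\C}^{(0)}=1$. Next, the tangent at $P(t)$ lies in the osculating plane $\pi_{\mathrm{osc}}(t)$, which is a $\mathrm{\Gamma}$-plane. For $q\equiv 0\pmod 3$, Remark \ref{observation4:EA} says the $\mathrm{\Gamma}$-planes form a pencil with axis $\Ar$, so any line in a $\mathrm{\Gamma}$-plane other than $\Ar$ itself meets $\Ar$ in exactly one point; the tangent is not $\Ar$ (it already contains $P(t)\notin\Ar$), hence $\Pb_{(q+1)_\mathrm{\Gamma},\Tr}^{(0)}=1$. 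Then $\Lb_{\Tr,(q+1)_\mathrm{\Gamma}}^{(0)}$ follows from \eqref{eq4_obtainLb} via
\[
\Lb_{\Tr,(q+1)_\mathrm{\Gamma}}^{(0)} \;=\; \frac{\Pb_{(q+1)_\mathrm{\Gamma},\Tr}^{(0)}\cdot\#\OO_\Tr}{\#\M_{(q+1)_\mathrm{\Gamma}}^{(0)}} \;=\; \frac{1\cdot(q+1)}{q+1}\;=\;1.
\]

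Next I would handle the $\RC$ and $\IC$ entries. A tangent is a chord of $\C$; by Theorem \ref{th2_Hirs}(iv) every point off $\C$ lies on exactly one chord. A $\RC$-point is on a real chord (distinct from a tangent), and an $\IC$-point is on an imaginary chord, so neither type can lie on a tangent. Hence $\Pb_{\RC,\Tr}^{(0)}=\Pb_{\IC,\Tr}^{(0)}=0$, and Corollary \ref{cor4_=0} gives $\Lb_{\Tr,\RC}^{(0)}=\Lb_{\Tr,\IC}^{(0)}=0$.

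The remaining entries concern the $\TO$-points. The key step—and the one I expect to be the main obstacle—is to certify that every point of a tangent $\ell$ other than its point of tangency and its intersection with $\Ar$ is genuinely a $\TO$-point, i.e.\ lies on exactly one osculating plane. Since $\ell$ is a chord (a tangent) it lies in its own osculating plane, which is a $\mathrm{\Gamma}$-plane. Because the $\mathrm{\Gamma}$-planes form a pencil through $\Ar$, a point off $\Ar$ lies in exactly one plane of the pencil; thus any point of $\ell$ off $\Ar$ lies in exactly one $\mathrm{\Gamma}$-plane, namely $\pi_{\mathrm{osc}}(t)$, and is on the tangent $\ell$ without being a $\C$-point, which is precisely the definition of a $\TO$-point. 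A tangent has $q+1$ points; removing the unique $\C$-point and the unique $(q+1)_\mathrm{\Gamma}$-point leaves $q-1$ $\TO$-points, giving $\Pb_{\TO,\Tr}^{(0)}=q-1$. Finally, \eqref{eq4_obtainLb} yields
\[
\Lb_{\Tr,\TO}^{(0)}\;=\;\frac{\Pb_{\TO,\Tr}^{(0)}\cdot\#\OO_\Tr}{\#\M_\TO^{(0)}}\;=\;\frac{(q-1)(q+1)}{q^2-1}\;=\;1,
\]
which also matches the direct argument from Theorem \ref{th2_Hirs}(iv): a $\TO$-point lies on a unique chord, which must be the tangent it lies on. As a consistency check one can verify \eqref{eq4_points_in_line_sum}: $1+1+(q-1)+0+0=q+1$.
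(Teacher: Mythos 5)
Your proposal is correct and follows essentially the same route as the paper's proof: the $\C$- and $(q+1)_\mathrm{\Gamma}$-entries from the definition of a tangent and the pencil structure of the $\mathrm{\Gamma}$-planes, the $\RC$- and $\IC$-entries from the uniqueness of the chord through a point off $\C$ (Theorem \ref{th2_Hirs}(iv)), and the $\TO$-entries by counting the remaining $q-1$ points of the tangent. The only cosmetic difference is that you obtain $\Lb_{\Tr,\TO}^{(0)}=1$ from the counting identity \eqref{eq4_obtainLb} while the paper reads it off directly from the uniqueness of the chord through a point off $\C$ — you note this alternative yourself.
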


\begin{proof}
By definition, a tangent contains exactly one $\C$-point and there is one tangent through every $\C$-point. Thus, $\Pb_{\C,\Tr}^{(0)}=\Lb_{\Tr,\C}^{(0)}=1$. Also, a tangent lies in a $\mathrm{\Gamma}$-plane and, hence, intersects the $\Ar$-line. This implies
$\Pb_{(q+1)_\mathrm{\Gamma},\Tr}^{(0)}=1$ whence, by~\eqref{eq4_obtainLb}, $\Lb_{\Tr,(q+1)_\mathrm{\Gamma}}^{(0)}=1$. The remaining  $q-1$ points of the tangent are $\TO$-points, i.e. $\Pb_{\TO,\Tr}^{(0)}=q-1$.

 By Theorem \ref{th2_Hirs}(iv), two chords do not intersect each other off $\C$; therefore, $\Lb_{\Tr,\TO}^{(0)}=1$,
 $\Pb_{\RC,\Tr}^{(0)}=\Pb_{\IC,\Tr}^{(0)}= \Lb_{\Tr,\RC}^{(0)}=\Lb_{\Tr,\IC}^{(0)}=0$.
\end{proof}

\begin{theorem}\label{th6:IC}
 For $\IC$-lines the following holds:
  \begin{align*}
 &\Pb_{\IC,\IC}^{(0)}=q+1,~\Lb_{\IC,\IC}^{(0)}=1,~\Pb_{\pk,\IC}^{(0)}=\Lb_{\IC,\pk}^{(0)}=0,~\pk\in\{\C,(q+1)_\mathrm{\Gamma},\TO,\RC\}.
  \end{align*}
\end{theorem}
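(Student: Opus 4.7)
The plan is to combine Theorem \ref{th2_Hirs}(iv) (every point off $\C$ lies on exactly one chord of $\C$) with the definitional classification of point orbits in Notation 1, and then close out the unique non-zero entry using the row-sum identity \eqref{eq4_points_in_line_sum}.

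First, by definition an imaginary chord meets $\C$ only in a pair of complex conjugate points, so an $\IC$-line carries no $\F_q$-rational point of $\C$. This immediately gives $\Pb_{\C,\IC}^{(0)}=0$, and Corollary \ref{cor4_=0} then produces $\Lb_{\IC,\C}^{(0)}=0$.

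Next, I would fix an $\IC$-line $\ell$ and observe that all $q+1$ of its points lie off $\C$, so by Theorem \ref{th2_Hirs}(iv) each of them lies on exactly one chord of $\C$; that unique chord must be $\ell$ itself. Hence no point of $\ell$ can also lie on a tangent or on a real chord, which by the orbit definitions rules out $\RC$- and $\TO$-points on $\ell$ and yields $\Pb_{\RC,\IC}^{(0)}=\Pb_{\TO,\IC}^{(0)}=0$. The main obstacle is the $(q+1)_\mathrm{\Gamma}$-points, since they are not defined by incidence with a chord. Here I would invoke Theorem \ref{th6:T}, which records $\Lb_{\Tr,(q+1)_\mathrm{\Gamma}}^{(0)}=1$: every $(q+1)_\mathrm{\Gamma}$-point already has a tangent as a chord through it, and chord uniqueness from Theorem \ref{th2_Hirs}(iv) makes this tangent the only chord through that point. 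Consequently no $(q+1)_\mathrm{\Gamma}$-point lies on the imaginary chord $\ell$, giving $\Pb_{(q+1)_\mathrm{\Gamma},\IC}^{(0)}=0$.

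Finally, applying \eqref{eq4_points_in_line_sum} with $\lambda=\IC$, summed over $\pk\in\Mk^{(0)}=\{\C,(q+1)_\mathrm{\Gamma},\TO,\RC,\IC\}$, forces $\Pb_{\IC,\IC}^{(0)}=q+1$. Corollary \ref{cor4_=0} then produces $\Lb_{\IC,\pk}^{(0)}=0$ for $\pk\in\{\C,(q+1)_\mathrm{\Gamma},\TO,\RC\}$, and $\Lb_{\IC,\IC}^{(0)}=1$ follows once more from Theorem \ref{th2_Hirs}(iv): every $\IC$-point lies on exactly one chord, and the preceding analysis shows that chord is necessarily imaginary. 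As a consistency check, one can verify \eqref{eq4:Pi_aver} directly: $\Lb_{\IC,\IC}^{(0)}\cdot\#\M_\IC^{(0)}=1\cdot\tfrac{1}{2}(q^3-q)=(q+1)\cdot\tfrac{1}{2}(q^2-q)=\Pb_{\IC,\IC}^{(0)}\cdot\#\OO_\IC$.
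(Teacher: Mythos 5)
Your proof is correct, and it rests on the same two pillars as the paper's: the definition of an $\IC$-point and the chord-uniqueness statement of Theorem \ref{th2_Hirs}(iv). The difference is one of economy. The paper observes in a single sentence that, by definition, \emph{every} point of an imaginary chord is an $\IC$-point; since the point orbits $\Mk^{(0)}$ partition $\PG(3,q)$, this simultaneously yields $\Pb_{\IC,\IC}^{(0)}=q+1$ and $\Pb_{\pk,\IC}^{(0)}=0$ for all other types $\pk$, with no case analysis. Your bottom-up route --- eliminating $\C$-, $\RC$-, $\TO$-, and $(q+1)_\mathrm{\Gamma}$-points one at a time and then recovering $\Pb_{\IC,\IC}^{(0)}=q+1$ from the row-sum \eqref{eq4_points_in_line_sum} --- reaches the same place, but the detour through Theorem \ref{th6:T} to exclude $(q+1)_\mathrm{\Gamma}$-points is unnecessary: a $(q+1)_\mathrm{\Gamma}$-point belongs to the orbit $\M_2^{(0)}$, which is disjoint from $\M_\IC^{(0)}$, so it cannot lie on an imaginary chord by the partition alone. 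Your closing step for $\Lb_{\IC,\IC}^{(0)}=1$ and the consistency check against \eqref{eq4:Pi_aver} are both sound and match the paper's use of Theorem \ref{th2_Hirs}(iv).
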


\begin{proof}
By definition, all points of an $\IC$-line are $\IC$-points; this implies $\Pb_{\IC,\IC}^{(0)}=q+1$,  $\Pb_{\pk,\IC}^{(0)}=\Lb_{\IC,\pk}^{(0)}=0,~\pk\in\{\C,(q+1)_\mathrm{\Gamma},\TO,\RC\}$.
By Theorem \ref{th2_Hirs}(iv), two $\IC$-lines do not intersect each other; therefore, $\Lb_{\IC,\IC}^{(0)}=1$.
\end{proof}

\begin{theorem}\label{th6:UG}
 For $\UG$-lines the following holds:
  \begin{align*}
 &\Pb_{\C,\UG}^{(0)}=\Pb_{(q+1)_\mathrm{\Gamma},\UG}^{(0)}=1,~\Lb_{\UG,\C}^{(0)}=\Lb_{\UG,(q+1)_\mathrm{\Gamma}}^{(0)}=q,~\Pb_{\TO,\UG}^{(0)}=\Lb_{\UG,\TO}^{(0)}=0,\db\\
 &\Lb_{\UG,\RC}^{(0)}=\Lb_{\UG,\IC}^{(0)}=1,~\Pb_{\RC,\UG}^{(0)}=\Pb_{\IC,\UG}^{(0)}=\frac{1}{2}(q-1).
  \end{align*}
\end{theorem}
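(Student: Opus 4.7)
\medskip\noindent\textbf{Proof plan.} The plan is to handle the seven assertions in three groups, feeding the easy facts into the counting identities of Section~\ref{sec:useful} to obtain the remaining values.

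First, both $\Pb_{\C,\UG}^{(0)}=1$ and $\Pb_{(q+1)_\mathrm{\Gamma},\UG}^{(0)}=1$ are direct consequences of the definition of $\UG$-line. By definition a $\UG$-line is a non-tangent unisecant lying in a $\mathrm{\Gamma}$-plane, hence meets $\C$ in exactly one point. Since for $q\equiv 0\pmod 3$ the $\mathrm{\Gamma}$-planes form a pencil with axis $\Ar$, and there are only $q+1$ planes through $\Ar$, every plane through $\Ar$ lies in this pencil; hence any line of a $\mathrm{\Gamma}$-plane distinct from $\Ar$ itself meets $\Ar$ in exactly one point, which is a $(q+1)_\mathrm{\Gamma}$-point. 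The values $\Lb_{\UG,\C}^{(0)}=\Lb_{\UG,(q+1)_\mathrm{\Gamma}}^{(0)}=q$ then drop out of \eqref{eq4_obtainLb} with $\#\OO_\UG=q^2+q$ and $\#\M_\C=\#\M_{(q+1)_\mathrm{\Gamma}}=q+1$.

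Next, for $\Pb_{\TO,\UG}^{(0)}=0$ I argue by contradiction: if a $\UG$-line $\ell\subset\pi_\T{osc}(t_0)$ contained a $\TO$-point $Q$, then since $Q$ lies on exactly one osculating plane, that plane is $\pi_\T{osc}(t_0)$. The unique tangent through $Q$, being the tangent at some $P(s)$ and lying in $\pi_\T{osc}(s)$, must also contain $Q$, forcing $s=t_0$. Then $\ell$ and the tangent at $P(t_0)$ both lie in $\pi_\T{osc}(t_0)$ and pass through the two distinct points $Q$ and $P(t_0)$, so they coincide, contradicting the non-tangency of $\ell$. Corollary~\ref{cor4_=0} then yields $\Lb_{\UG,\TO}^{(0)}=0$.

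The main obstacle is the argument for $\Lb_{\UG,\RC}^{(0)}=\Lb_{\UG,\IC}^{(0)}=1$. A key preliminary step is to show that no chord of $\C$ meets $\Ar$: if it did, the plane spanned by the chord and $\Ar$ would be a member of the pencil $\mathrm{\Gamma}$ (every plane through $\Ar$ is a $\mathrm{\Gamma}$-plane) yet would contain two $\C$-points, contradicting the definition of $\mathrm{\Gamma}$-plane. Consequently any $\RC$- or $\IC$-point $P$ lies off $\Ar$, and so belongs to exactly one member of the pencil, say $\pi_\T{osc}(t_0)$. The line $\overline{PP(t_0)}$ is then a $\UG$-line: it is a unisecant (not a chord, since the unique chord through $P$ guaranteed by Theorem~\ref{th2_Hirs}(iv) is the $\RC$- or $\IC$-line through $P$, which cannot lie in $\pi_\T{osc}(t_0)$ because that plane meets $\C$ only in $P(t_0)$), and not a tangent (else $P$ would be a $\TO$-point). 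Uniqueness follows because any $\UG$-line through $P$ lies in some $\mathrm{\Gamma}$-plane, necessarily $\pi_\T{osc}(t_0)$, and must pass through its unique $\C$-point $P(t_0)$. The remaining values $\Pb_{\RC,\UG}^{(0)}=\Pb_{\IC,\UG}^{(0)}=\frac{1}{2}(q-1)$ then follow from \eqref{eq4_obtainPb}, and the bookkeeping can be cross-checked against \eqref{eq4_points_in_line_sum} via $1+1+0+\frac{1}{2}(q-1)+\frac{1}{2}(q-1)=q+1$.
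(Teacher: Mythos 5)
Your proof is correct and follows essentially the same route as the paper: the values at $\C$- and $(q+1)_\mathrm{\Gamma}$-points come from the definitions, $\Pb_{\TO,\UG}^{(0)}=0$ comes from showing a $\UG$-line and a tangent cannot meet off $\C$, and the key step is identical --- the unique $\UG$-line through an $\RC$- or $\IC$-point $P$ is the join of $P$ with the contact point of the unique $\mathrm{\Gamma}$-plane through $P$ (the paper cites \cite{BDMP-TwCub} for the uniqueness of that plane, whereas you derive it from the pencil structure, which is a nice self-contained touch). One small imprecision: the auxiliary claim that ``no chord of $\C$ meets the $\Ar$-line'' is false as literally stated, since under Definition 1 tangents are chords and they do meet $\Ar$; moreover, for an imaginary chord the ``two $\C$-points in a $\mathrm{\Gamma}$-plane'' contradiction only works after passing to the extended cubic over $\F_{q^2}$. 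This does not damage the argument, because the fact you actually need --- that $\RC$- and $\IC$-points lie off the $\Ar$-line --- is immediate from the orbit partition, the points of $\Ar$ forming the separate orbit $\M_{(q+1)_\mathrm{\Gamma}}^{(0)}$.
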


\begin{proof}
  By definition, a $\UG$-line contains exactly one $\C$-point and there are $q$ $\UG$-lines thro\-ugh every $\C$-point, i.e. $\Pb_{\C,\UG}^{(0)}=1$, $\Lb_{\UG,\C}^{(0)}=q$. A $\UG$-line lies in a $\mathrm{\Gamma}$-plane and, hence, intersects the $\Ar$-line. This implies $\Pb_{(q+1)_\mathrm{\Gamma},\UG}^{(0)}=1$ whence, by \eqref{eq4_obtainLb}, $\Lb_{\UG,(q+1)_\mathrm{\Gamma}}^{(0)}=q$.

 By definition, $\UG$-lines and $\Tr$-lines lie  in $\mathrm{\Gamma}$-planes. If a $\UG$-line and a $\Tr$-line belong to the same $\mathrm{\Gamma}$-plane, then their common point  is a $\C$-point. Otherwise they are skew. So, a $\UG$-line cannot intersect a $\Tr$-line off $\C$.  As all $\TO$-points are off $\C$, we have $\Pb_{\TO,\UG}^{(0)}=\Lb_{\UG,\TO}^{(0)}=0$.

 By \cite[Table 2, Theorem 5.13(ii)]{BDMP-TwCub}, there is exactly one $\mathrm{\Gamma}$-plane, say $\pi_P$, through an $\RC$-point $P$. Let $Q\in\pi_P$ be the contact point of $\C$ and $\pi_P$. By Theorem \ref{th2_Hirs}(iv), the line $\overline{PQ}$ cannot be either a real chord or a tangent, hence $\overline{PQ}$  is a $\UG$-line. Thus, $\Lb_{\UG,\RC}^{(0)}=1$. Similarly, it can be shown that $\Lb_{\UG,\IC}^{(0)}=1$. Now, by \eqref{eq4_obtainPb}, we obtain $\Pb_{\RC,\UG}^{(0)}=\Pb_{\IC,\UG}^{(0)}=(q-1)/2$.
\end{proof}

\begin{theorem}\label{th6:A}
 For the $\Ar$-line the following holds:
  \begin{align*}
 &\Pb_{(q+1)_\mathrm{\Gamma},\Ar}^{(0)}=q+1,~\Lb_{\Ar,(q+1)_\mathrm{\Gamma}}^{(0)}=1,~\Pb_{\pk,\Ar}^{(0)}=\Lb_{\Ar,\pk}^{(0)}=0,~\pk\in\{\C,\TO,\RC,\IC\}.
  \end{align*}
\end{theorem}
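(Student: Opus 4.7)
The plan is to use the characterization of the $\Ar$-line as the axis of the pencil of $\mathrm{\Gamma}$-planes (which exists precisely when $q\equiv0\pmod3$) together with the definitions of the point types in $\Mk^{(0)}$.

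First, I would identify all points on the $\Ar$-line. By the definition recalled in Remark \ref{observation4:EA} and Section \ref{sec_prelimin}, the $q+1$ osculating planes form a pencil whose axis is the $\Ar$-line. Consequently, every point of the $\Ar$-line lies on all $q+1$ osculating planes, hence is, by the definition of a $(q+1)_\mathrm{\Gamma}$-point, a $(q+1)_\mathrm{\Gamma}$-point. Since the $\Ar$-line carries exactly $\theta_{1,q}=q+1$ points and $\#\M_{(q+1)_\mathrm{\Gamma}}^{(0)}=q+1$ by Theorem~\ref{th2_Hirs}(ii)(c), the $(q+1)_\mathrm{\Gamma}$-points coincide with the points of the $\Ar$-line. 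This yields $\Pb_{(q+1)_\mathrm{\Gamma},\Ar}^{(0)}=q+1$, and then $\Lb_{\Ar,(q+1)_\mathrm{\Gamma}}^{(0)}=1$ follows either directly (there is a unique $\Ar$-line, see Table \ref{tab1}) or by \eqref{eq4_obtainLb} with $\#\OO_\Ar=1$ and $\#\M_{(q+1)_\mathrm{\Gamma}}^{(0)}=q+1$.

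Second, I would rule out the remaining point types by contrasting their definitions with the membership condition for the axis. A $\C$-point lies on a unique $\mathrm{\Gamma}$-plane (its osculating plane), a $\TO$-point by definition lies on exactly one $\mathrm{\Gamma}$-plane, and an $\RC$- or $\IC$-point lies off $\C$ on a real, respectively imaginary, chord; none of them lies on all $q+1$ planes of the pencil $\mathrm{\Gamma}$, hence none belongs to the $\Ar$-line. Therefore $\Pb_{\pk,\Ar}^{(0)}=0$ for $\pk\in\{\C,\TO,\RC,\IC\}$, and Corollary~\ref{cor4_=0} (or equivalently the partition already established above, since the $q+1$ points of the $\Ar$-line are accounted for entirely by $\M_{(q+1)_\mathrm{\Gamma}}^{(0)}$) gives $\Lb_{\Ar,\pk}^{(0)}=0$ as well.

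There is no serious obstacle here; the only small subtlety is making explicit that ``lying on all $q+1$ osculating planes'' is exactly the defining property of the orbit $\M_{(q+1)_\mathrm{\Gamma}}^{(0)}$, so that a cardinality comparison closes the argument without a separate orbit-theoretic analysis. As a consistency check one may verify \eqref{eq4_points_in_line_sum} on the $\Ar$-line, which reduces to $q+1=q+1$.
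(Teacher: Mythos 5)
Your proof is correct and follows essentially the same route as the paper's: the paper likewise observes that by definition all points of the $\Ar$-line are $(q+1)_\mathrm{\Gamma}$-points with a unique $\Ar$-line through each, and reads off all the stated values from that. You merely spell out the identification $\M_{(q+1)_\mathrm{\Gamma}}^{(0)}=\{\text{points of the }\Ar\text{-line}\}$ via the cardinality comparison, which the paper leaves implicit.
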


\begin{proof}
  By definition, all points of the $\Ar$-line are $(q+1)_\mathrm{\Gamma}$-points and there is one $\Ar$-line through every $(q+1)_\mathrm{\Gamma}$-point; this implies $\Pb_{(q+1)_\mathrm{\Gamma},\Ar}^{(0)}=q+1$, $\Lb_{\Ar,(q+1)_\mathrm{\Gamma}}^{(0)}=1$, and $\Pb_{\pk,\Ar}^{(0)}=\Lb_{\Ar,\pk}^{(0)}=0$, $\pk\ne(q+1)_\mathrm{\Gamma}$.
\end{proof}

\begin{theorem}\label{th6:UnGa}
 The following holds:
  \begin{align*}
 &\Pb_{\C,\UnG}^{(0)}=\Pb_{\C,\UnG_v}^{(0)}=1,~\Lb_{\UnG,\C}^{(0)}=q^2-q,~\Lb_{\UnG_v,\C}^{(0)}=\frac{1}{2}(q^2-q),~v=1,2;\db\\
 &\Pb_{\C,\lambda}^{(0)}= \Lb_{\lambda,\C}^{(0)}=0,~\lambda\in\{\EnG,\EA,\EA_j\},~j=1,2,3.
  \end{align*}
\end{theorem}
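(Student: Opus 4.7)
The plan is to proceed as in the earlier case-by-case theorems of this section, using the types of lines that can or cannot pass through a $\C$-point together with the counting identities of Section~\ref{sec:useful}. I would split the argument into three parts: the computation of $\Pb_{\C,\UnG}^{(0)}$ and $\Pb_{\C,\UnG_v}^{(0)}$, the computation of $\Lb_{\UnG,\C}^{(0)}$ and $\Lb_{\UnG_v,\C}^{(0)}$, and the (essentially immediate) vanishing statements for the external classes.

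First, by the definition of a unisecant, every $\UnG$-line contains exactly one $\C$-point. This holds independently of which orbit $\OO_{\UnG_v}$ the line belongs to, so $\Pb_{\C,\UnG}^{(0)} = \Pb_{\C,\UnG_v}^{(0)} = 1$ for $v=1,2$.

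Next, I would count the lines through a fixed $\C$-point by type. In $\PG(3,q)$, each point lies on $\beta_{2,q}=q^2+q+1$ lines, see \eqref{eq4_lines_through point_sum}. A line through a $\C$-point must meet $\C$; therefore it can only be of types $\RC$, $\Tr$, $\UG$, or $\UnG$ (chords, tangents, or unisecants). Theorems \ref{th6:RC}, \ref{th6:T}, \ref{th6:UG} already give $\Lb_{\RC,\C}^{(0)}=q$, $\Lb_{\Tr,\C}^{(0)}=1$, $\Lb_{\UG,\C}^{(0)}=q$. Substituting in \eqref{eq4_obtainLb2} with $\lambda^*=\UnG$,
\begin{equation*}
\Lb_{\UnG,\C}^{(0)} = (q^2+q+1) - q - 1 - q = q^2-q.
\end{equation*}
For each $v\in\{1,2\}$, using \eqref{eq4_obtainLb} with $\#\OO_{\UnG_v}=(q^3-q)/2$ and $\#\M_\C=q+1$ gives $\Lb_{\UnG_v,\C}^{(0)} = 1\cdot(q^3-q)/2\,/\,(q+1) = (q^2-q)/2$, which also agrees with $\Lb_{\UnG,\C}^{(0)}=\Lb_{\UnG_1,\C}^{(0)}+\Lb_{\UnG_2,\C}^{(0)}$ from \eqref{eq4_class_Lambda}.

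Finally, by definition, $\EnG$-lines and $\EA$-lines are external lines other than chords, hence they contain no $\C$-point. Thus $\Pb_{\C,\EnG}^{(0)} = \Pb_{\C,\EA}^{(0)} = \Pb_{\C,\EA_j}^{(0)} = 0$ for $j=1,2,3$, and Corollary \ref{cor4_=0} immediately yields $\Lb_{\EnG,\C}^{(0)} = \Lb_{\EA,\C}^{(0)} = \Lb_{\EA_j,\C}^{(0)} = 0$. There is no real obstacle in this proof beyond bookkeeping: the key point is simply recognizing that the $\UnG$-count is forced once one knows the contributions of $\RC$, $\Tr$, and $\UG$-lines through a $\C$-point, and that all other line types through a $\C$-point are either impossible by definition (external lines, $\IC$-lines, $\Ar$-line) or already counted.
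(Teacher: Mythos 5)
Your proposal is correct and follows essentially the same route as the paper: the definition of a unisecant gives $\Pb_{\C,\UnG}^{(0)}=\Pb_{\C,\UnG_v}^{(0)}=1$, the double-counting relations convert these into the $\Lb$ values, and the external classes vanish by definition together with Corollary~\ref{cor4_=0}. The only cosmetic difference is that you obtain $\Lb_{\UnG,\C}^{(0)}=q^2-q$ by subtraction via \eqref{eq4_obtainLb2} (which also relies on the vanishing counts you establish afterwards), whereas the paper simply applies \eqref{eq4_obtainLb} to the whole class $\OO_\UnG$ exactly as you do for the individual orbits $\OO_{\UnG_v}$; both computations are valid.
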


\begin{proof}
By definition, a $\UnG$-line contains exactly one $\C$-point, i.e. $\Pb_{\C,\UnG}^{(0)}=\\
\Pb_{\C,\UnG_v}^{(0)}=1$, whence, by \eqref{eq4_obtainLb}, $\Lb_{\UnG,\C}^{(0)}=q^2-q$, $\Lb_{\UnG_v,\C}^{(0)}=(q^2-q)/2$. Also, by definition, $\Pb_{\C,\lambda}^{(0)}= 0$, $\lambda\in\{\EnG,\EA,\EA_j\}$,
whence, by Corollary \ref{cor4_=0}, $\Lb_{\lambda,\C}^{(0)}=0$.
\end{proof}

\begin{theorem}\label{th6:(q+1)_Gamma}
  A $\UnG$- and an $\EnG$-line cannot intersect the $\Ar$-line whereas an $\EA$-line necessary intersects it. The following holds:
  \begin{align*}
  &\Pb_{(q+1)_\mathrm{\Gamma},\lambda}^{(0)}=\Lb_{\lambda,(q+1)_\mathrm{\Gamma}}^{(0)}=0,~\lambda\in\{\UnG,\UnG_v,\EnG\},~v=1,2;\db\\
  &  \Pb_{(q+1)_\mathrm{\Gamma},\lambda}^{(0)}=1,~\lambda\in\{\EA,\EA_j\},~j=1,2,3;\db\\
  &\Lb_{\EA,(q+1)_\mathrm{\Gamma}}^{(0)}=q^2-1, ~\Lb_{\EA_1,(q+1)_\mathrm{\Gamma}}^{(0)}=q^2-q,~\Lb_{\EA_j,(q+1)_\mathrm{\Gamma}}^{(0)}=\frac{1}{2}(q-1),~j=2,3;
 \end{align*}
 where $\Pb_{(q+1)_\mathrm{\Gamma},\EA}^{(0)}=1$ is the \emph{exact} number of $(q+1)_\mathrm{\Gamma}$-points on an $\EA$-line.
 \end{theorem}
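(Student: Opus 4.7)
The plan is to separate the argument into two parts: first, the line types that cannot meet the $\Ar$-line, and second, the $\EA$-lines together with their orbit decomposition. The unifying observation is that, since all points of the $\Ar$-line are $(q+1)_\mathrm{\Gamma}$-points and no other point of $\PG(3,q)$ is (recall the pencil structure of $\mathrm{\Gamma}$-planes in Remark \ref{observation4:EA}), for every $\lambda \ne \Ar$ the number of $(q+1)_\mathrm{\Gamma}$-points on a $\lambda$-line equals the number of points that line shares with $\Ar$.

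First I would invoke Remark \ref{observation4:EA} once more: only lines lying in a $\mathrm{\Gamma}$-plane can intersect the $\Ar$-line. By the definitions collected in Section \ref{sec_prelimin}, $\UnG$-lines (unisecants not in a $\mathrm{\Gamma}$-plane) and $\EnG$-lines (external lines not in a $\mathrm{\Gamma}$-plane) never lie in a $\mathrm{\Gamma}$-plane; the sub-orbits $\UnG_v$, $v=1,2$, inherit this property. Hence none of these line types meets $\Ar$, so $\Pb_{(q+1)_\mathrm{\Gamma},\lambda}^{(0)}=0$ for $\lambda\in\{\UnG,\UnG_v,\EnG\}$, and Corollary \ref{cor4_=0} promotes this to $\Lb_{\lambda,(q+1)_\mathrm{\Gamma}}^{(0)}=0$.

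Next, for $\EA$-lines: by definition each meets the $\Ar$-line, and since two distinct lines in $\PG(3,q)$ share at most one point, this intersection consists of exactly one $(q+1)_\mathrm{\Gamma}$-point. This count is independent of the particular orbit $\OO_{\EA_j}$, so $\Pb_{(q+1)_\mathrm{\Gamma},\EA_j}^{(0)}=1$ exactly for $j=1,2,3$. Averaging via \eqref{eq4:Pi_aver2} then gives $\Pb_{(q+1)_\mathrm{\Gamma},\EA}^{(0)}=1$, and the value is exact (not merely an average) because it equals $1$ on every individual $\EA$-line.

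To finish, I would plug the orbit sizes from Table \ref{tab1} and $\#\M_{(q+1)_\mathrm{\Gamma}}^{(0)}=q+1$ from \eqref{eq2_=0_orbit_point} into Corollary \ref{cor4_obtainPbLb}, equation \eqref{eq4_obtainLb}, to compute $\Lb_{\EA_1,(q+1)_\mathrm{\Gamma}}^{(0)}=(q^3-q)/(q+1)=q^2-q$ and $\Lb_{\EA_j,(q+1)_\mathrm{\Gamma}}^{(0)}=((q^2-1)/2)/(q+1)=(q-1)/2$ for $j=2,3$; summing via \eqref{eq4_class_Lambda} yields $\Lb_{\EA,(q+1)_\mathrm{\Gamma}}^{(0)}=q^2-1$. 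The only mild subtlety is the exactness claim for $\Pb_{(q+1)_\mathrm{\Gamma},\EA}^{(0)}=1$ despite $\OO_\EA$ splitting into three orbits; this is handled directly by the per-line argument above rather than by invoking Lemma \ref{lemma4_line&point}(v).
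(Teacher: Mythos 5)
Your proposal is correct and follows essentially the same route as the paper: Remark \ref{observation4:EA} plus the definitions of $\UnG$-, $\EnG$-, and $\EA$-lines give the $\Pb$ values (with $\Pb_{(q+1)_\mathrm{\Gamma},\EA}^{(0)}=1$ exact because every $\EA$-line meets the $\Ar$-line in precisely one point), and \eqref{eq4_obtainLb} then yields the $\Lb$ values. Your extra touches (identifying the $(q+1)_\mathrm{\Gamma}$-points with the points of the $\Ar$-line, and citing Corollary \ref{cor4_=0} for the zero entries) are harmless refinements of the same argument.
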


\begin{proof}
  By definition, $\UnG$- and $\EnG$-lines do not lie in any $\mathrm{\Gamma}$-plane; it implies, due to Remark \ref{observation4:EA}, $ \Pb_{(q+1)_\mathrm{\Gamma},\lambda}^{(0)}=\Lb_{\lambda,(q+1)_\mathrm{\Gamma}}^{(0)}=0,~\lambda\in\{\UnG,\UnG_v,\EnG\}$. Also, by definition, an $\EA$-line necessary intersects the $\Ar$-line that gives $\Pb_{(q+1)_\mathrm{\Gamma},\lambda}^{(0)}=1,~\lambda\in\{\EA,\EA_j\}$, as the exact value. Now, by \eqref{eq4_obtainLb}, we obtain $\Lb_{\EA,(q+1)_\mathrm{\Gamma}}^{(0)}$ and $\Lb_{\EA_j,(q+1)_\mathrm{\Gamma}}^{(0)}$.
  \end{proof}

\begin{theorem}\label{th6:UnG}
   For $\UnG$-lines the following holds:
  \begin{align*}
  &\Lb_{\UnG,\TO}^{(0)}=\Lb_{\UnG,\IC}^{(0)}=q,~\Pb_{\TO,\UnG}^{(0)}=1,~\Pb_{\IC,\UnG}^{(0)}=\frac{1}{2}q,\db\\
&\Lb_{\UnG,\RC}^{(0)}=q-2,\,\Pb_{\RC,\UnG}^{(0)}=
  \frac{1}{2}(q-2).
 \end{align*}
\end{theorem}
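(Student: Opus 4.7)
My plan is to establish the six values in three steps, handling the $\IC$-, $\TO$-, and $\RC$-entries in turn; throughout I will use the cardinalities $\#\OO_\UnG = q^3-q$, $\#\M_\TO^{(0)} = q^2-1$, and $\#\M_\IC^{(0)} = \#\M_\RC^{(0)} = (q^3-q)/2$ from Table~\ref{tab1} and \eqref{eq2_=0_orbit_point}, together with Lemma~\ref{lemma4_line&point} and Corollary~\ref{cor4_obtainPbLb}. For the $\IC$-column, Remark~\ref{observation4:IC} identifies $\Pb_{\IC,\UnG}^{(0)}$ with the plane-line count $\mathrm{\Pi}_{\overline{1_\C},\UnG}^{(0)}$, which equals $q/2$ by \cite[Tables~1, 2]{DMP_PlLineInc}, and \eqref{eq4_obtainLb} then gives $\Lb_{\UnG,\IC}^{(0)} = q$.

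For the $\TO$-entries I argue geometrically. Fix a $\TO$-point $P$; by definition, $P$ lies on the tangent at a unique $\C$-point $P_0$ and on the unique osculating plane $\pi_{P_0}$. A direct computation from \eqref{eq2_osc_plane} shows that in characteristic~$3$, $\pi_\T{osc}(t) = \boldsymbol{\pi}(1,0,0,-t^3)$, so $P(s) \in \pi_\T{osc}(t)$ forces $s^3 = t^3$, hence $s = t$; thus each $\mathrm{\Gamma}$-plane meets $\C$ only at its contact point. Now, for each $Q \in \C \setminus \{P_0\}$, the line $\overline{PQ}$ is not a chord (by Theorem~\ref{th2_Hirs}(iv) the unique chord through $P$ is the tangent $\overline{PP_0}$), so it is a unisecant at $Q$; moreover, $\overline{PQ} \subset \pi_R$ would force $Q = R$ and $P \in \pi_Q$, contradicting the uniqueness of $\pi_{P_0}$. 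Hence $\overline{PQ}$ is a $\UnG$-line, and the map $Q \mapsto \overline{PQ}$ is a bijection from $\C \setminus \{P_0\}$ onto the set of $\UnG$-lines through $P$ (its inverse sends a $\UnG$-line to its unique $\C$-point, which cannot be $P_0$ lest the line coincide with $\overline{PP_0}$). Thus $\Lb_{\UnG,\TO}^{(0)} = q$, and \eqref{eq4_obtainPb} gives $\Pb_{\TO,\UnG}^{(0)} = q(q^2-1)/(q^3-q) = 1$.

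For the $\RC$-entries I apply the row-sum identity \eqref{eq4_points_in_line_sum}. Combining Theorems~\ref{th6:UnGa} and \ref{th6:(q+1)_Gamma} with the previous two steps gives $\Pb_{\C,\UnG}^{(0)}=1$, $\Pb_{(q+1)_\mathrm{\Gamma},\UnG}^{(0)}=0$, $\Pb_{\TO,\UnG}^{(0)}=1$, and $\Pb_{\IC,\UnG}^{(0)}=q/2$; subtracting from $q+1$ yields $\Pb_{\RC,\UnG}^{(0)} = (q-2)/2$, and \eqref{eq4_obtainLb} then gives $\Lb_{\UnG,\RC}^{(0)} = q-2$. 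The main obstacle is the middle step: the identification of the $\UnG$-lines through a $\TO$-point rests on the characteristic-$3$ fact that each osculating plane meets $\C$ only at its contact point, which has to be extracted from \eqref{eq2_osc_plane}; once this is in hand, the remaining values are routine bookkeeping.
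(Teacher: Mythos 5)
Your proposal is correct, and it reaches the six values by a route that only partly coincides with the paper's. The paper proves all three families of entries ($\TO$, $\RC$, $\IC$) by one and the same geometric construction: through a point $B$ of the relevant type there is exactly one $\mathrm{\Gamma}$-plane $\pi_B$ with contact point $R$, and the lines joining $B$ to the points of $\C$ other than $R$ and other than the endpoints of the chord through $B$ are exactly the $\UnG$-lines through $B$; this gives $\Lb_{\UnG,\TO}^{(0)}=q$, $\Lb_{\UnG,\RC}^{(0)}=q-2$, $\Lb_{\UnG,\IC}^{(0)}=q$ directly, and the $\Pb$-values follow from \eqref{eq4_obtainPb}. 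You carry out this construction only for the $\TO$ case (where your argument matches the paper's, with the welcome extra care of verifying explicitly from \eqref{eq2_osc_plane} that in characteristic $3$ an osculating plane meets $\C$ only at its contact point, and of checking that $Q\mapsto\overline{PQ}$ is a bijection). For the $\IC$ entries you instead invoke Remark~\ref{observation4:IC}, i.e.\ $\Pb_{\IC,\UnG}^{(0)}=\mathrm{\Pi}_{\overline{1_\C},\UnG}^{(0)}=q/2$ from the plane-line incidence tables of \cite{DMP_PlLineInc} — the same mechanism the paper itself uses later for the orbits $\UnG_1,\UnG_2$ and in Theorem~\ref{th6:IC}, so this is legitimate, though it trades a self-contained geometric argument for an external table lookup. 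For the $\RC$ entries you use the row-sum identity \eqref{eq4_points_in_line_sum} together with the previously established values $\Pb_{\C,\UnG}^{(0)}=1$ (Theorem~\ref{th6:UnGa}) and $\Pb_{(q+1)_\mathrm{\Gamma},\UnG}^{(0)}=0$ (Theorem~\ref{th6:(q+1)_Gamma}), both of which indeed precede this statement, so the dependency is sound; the arithmetic $q+1-1-0-1-q/2=(q-2)/2$ and the conversions via \eqref{eq4_obtainPb}--\eqref{eq4_obtainLb} check out. Your version is shorter and leans more on the bookkeeping machinery of Section~\ref{sec:useful}; the paper's version is more uniform and keeps the three cases geometrically parallel, which also serves as an independent consistency check on the table values you import.
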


\begin{proof}
Let $\TT$  be a tangent to $\C$ at a point $P$. Let $B\in\TT$ be a $\TO$-point. Let $\ell$ be a line through $B$ and one of the $q$ points of $\C\setminus \{P\}$. By Theorem~\ref{th2_Hirs}(iv), $\ell$ can be neither a real chord nor a tangent, hence it is a non-tangent unisecant. By\cite[Table 2, Theorem 5.13(ii)]{BDMP-TwCub}, there is exactly one $\mathrm{\Gamma}$-plane, say $\pi_B$, through the $\TO$-point $B$. Obviously, $\TT\in\pi_B$ and $P$ is the contact point of  $\C$ and $\pi_B$.
Thus, $\ell$ does not lie in a $\mathrm{\Gamma}$-plane, i.e. $\ell$ is a $\UnG$-line and we have $\#\C\setminus\{P\}$ $\UnG$-lines through every $\TO$-point. So, $\Lb_{\UnG,\TO}^{(0)}=q$, whence, by~\eqref{eq4_obtainPb}, $\Pb_{\TO,\UnG}^{(0)}=1$.

Let $\overline{PQ}$ be a real chord through $\C$-points $P$ and $Q$. Let $B\in\overline{PQ}$ be an $\RC$-point. By\cite[Table 2, Theorem 5.13(ii)]{BDMP-TwCub}, there is exactly one $\mathrm{\Gamma}$-plane, say $\pi_B$, through the $\RC$-point~$B$. Let $R\in\pi_B$ be the contact point of $\C$ and $\pi_B$. Let $\ell$ be a line through $P$ and one of the $q-2$ points of $\C\setminus\{P,Q,R\}$. By Theorem~\ref{th2_Hirs}(iv), $\ell$ can be neither a real chord nor a tangent; also, $\ell\notin\pi_B$. Thus, $\ell$ is a $\UnG$-line and we have $\#\C\setminus\{P,Q,R\}$ $\UnG$-lines through every $\RC$-point. So, $\Lb_{\UnG,\RC}^{(0)}=q-2$, whence, by~\eqref{eq4_obtainPb}, $\Pb_{\RC,\UnG}^{(0)}=(q-2)/2$.

Finally, let $\mathcal{IC}$ be an imaginary chord and $B\in\mathcal{IC}$ be an $\IC$-point. By \cite[Table~2]{BDMP-TwCub}, there is exactly one $\mathrm{\Gamma}$-plane, say $\pi_B$, through $B$. Let $R\in\pi_B$ be the contact point of $\C$ and $\pi_B$. Similarly to above, all the $q$ lines through $B$ and a point of $\C\setminus\{R\}$ are $\UnG$-lines; this gives $\Lb_{\UnG,\IC}^{(0)}=q$, whence, by~\eqref{eq4_obtainPb}, $\Pb_{\IC,\UnG}^{(0)}=q/2$.
\end{proof}

\begin{theorem}\label{th6:TO}
   For $\TO$-points the following holds:
  \begin{align*}
  &\Pb_{\TO,\EA}^{(0)}=\frac{q}{q+1},~\Lb_{\EA,\TO}^{(0)}=\Lb_{\EA_1,\TO}^{(0)}=q,~\Lb_{\EnG,\TO}^{(0)}=q^2-q,\\
  &\Pb_{\TO,\EA_1}^{(0)}=  \Pb_{\TO,\EnG}^{(0)}=1,~\Pb_{\TO,\EA_j}^{(0)}= \Lb_{\EA_j,\TO}^{(0)}=0,~j=2,3.
 \end{align*}
\end{theorem}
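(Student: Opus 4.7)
The plan is to prove the seven numerical assertions in three steps: first the total $\EA$- and $\EnG$-counts through a $\TO$-point $B$, then the localisation of $\OO_{\EA_2}\cup\OO_{\EA_3}$-lines inside each $\mathrm{\Gamma}$-plane, and finally the orbit split of the $q$ $\EA$-lines through $B$. For the first step I exploit that a $\TO$-point $B$ lies on a unique tangent $\TT$ and, by definition, on exactly one $\mathrm{\Gamma}$-plane $\pi_B$. Since only lines lying in a $\mathrm{\Gamma}$-plane meet $\Ar$ (Remark \ref{observation4:EA}), every $\EA$-line through $B$ is in $\pi_B$. Of the $q+1$ lines through $B$ in $\pi_B$, one is $\TT$, none is a $\UG$-line (Theorem \ref{th6:UG}), $\UnG$- and $\EnG$-lines are excluded by definition, an $\RC$- or $\IC$-line would need a second $\C$-point in $\pi_B$, and $\Ar$ does not pass through $B$; so the remaining $q$ lines through $B$ in $\pi_B$ must be $\EA$-lines. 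This gives $\Lb_{\EA,\TO}^{(0)}=q$ and, via \eqref{eq4_obtainPb}, $\Pb_{\TO,\EA}^{(0)}=q/(q+1)$. The previously established values of $\Lb_{\lambda,\TO}^{(0)}$ for $\lambda\in\{\RC,\Tr,\IC,\UG,\UnG,\Ar\}$ (Theorems \ref{th6:RC}--\ref{th6:UnG}) sum to $q+1$; combined with $\Lb_{\EA,\TO}^{(0)}=q$ and \eqref{eq4_lines_through point_sum}, this yields $\Lb_{\EnG,\TO}^{(0)}=q^2-q$, and then \eqref{eq4_obtainPb} gives $\Pb_{\TO,\EnG}^{(0)}=1$.

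For the second step, fix a $(q+1)_\mathrm{\Gamma}$-point $R\in\Ar$. By orbit-stabilizer applied to $|G_q|=q(q^2-1)$ and $\#\M_{(q+1)_\mathrm{\Gamma}}^{(0)}=q+1$, the stabilizer of $R$ in $G_q$ has order $q(q-1)$. Under the identification $G_q\cong PGL(2,q)$ acting on $\C$, every element of this stabilizer permutes the tangents of $\C$; since the map sending a tangent to its intersection with $\Ar$ is a bijection (a consequence of Theorem \ref{th2_Hirs}(iv), as two distinct tangents are chords of $\C$ and so cannot meet off $\C$), the unique tangent through $R$ is fixed, and with it the contact point $P_R\in\C$ of the unique $\mathrm{\Gamma}$-plane $\pi_R$ whose tangent meets $\Ar$ at $R$. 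The full stabilizer of $P_R$ in $PGL(2,q)$ also has order $q(q-1)$, so the two subgroups coincide; hence our subgroup is conjugate to the affine group $AGL(1,q)$ and acts transitively on $\C\setminus\{P_R\}$, equivalently on the $q$ $\mathrm{\Gamma}$-planes through $\Ar$ distinct from $\pi_R$. Letting $x$ (resp.\ $y$) denote the number of $(\OO_{\EA_2}\cup\OO_{\EA_3})$-lines through $R$ lying in $\pi_R$ (resp.\ in any one, hence each, of the other $q$ $\mathrm{\Gamma}$-planes through $\Ar$), Theorem \ref{th6:(q+1)_Gamma} gives $x+qy=q-1$; since $y\ge1$ would force $x<0$, we must have $y=0$ and $x=q-1$. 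Consequently, in any $\mathrm{\Gamma}$-plane $\pi$, each $\EA$-line through an axis-point different from $\TT_\pi\cap\Ar$ (where $\TT_\pi$ is the tangent of $\pi$) belongs to $\OO_{\EA_1}$.

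For the third step, the $q$ $\EA$-lines through $B$ in $\pi_B$ meet $\Ar$ at the $q$ axis-points $R_i\ne R_B:=\TT\cap\Ar$; for each $R_i$ one has $\pi_B\ne\pi_{R_i}$, and the second step forces $BR_i\in\OO_{\EA_1}$. Therefore $\Lb_{\EA_1,\TO}^{(0)}=q$ and $\Lb_{\EA_j,\TO}^{(0)}=0$ for $j=2,3$, and the corresponding $\Pb$-values follow from \eqref{eq4_obtainPb}. The main obstacle is the stabilizer identification in the second step, namely the verification that a subgroup of $PGL(2,q)$ of order $q(q-1)$ which fixes a point of $\C$ is necessarily the full stabilizer of that point, hence conjugate to $AGL(1,q)$ and sharply $2$-transitive on the remaining $q$ points of $\C$; once this is granted, the divisibility constraint $x+qy=q-1$ with $x,y\ge0$ integers closes the argument.
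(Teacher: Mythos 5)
Your proof is correct, but it reaches the two non-trivial conclusions by routes genuinely different from the paper's. For $\Lb_{\EA,\TO}^{(0)}=q$ the paper works in the opposite direction: it first computes the average $\Pb_{\TO,\EA}^{(0)}=q/(q+1)$ by counting, in each $\mathrm{\Gamma}$-plane, the $q^2-q$ $\EA$-lines (out of $q^2-1$) that meet the tangent of that plane in a $\TO$-point (Remark~\ref{observation4:EA}), and then derives $\Lb_{\EA,\TO}^{(0)}$ from \eqref{eq4_obtainLb}; you instead classify the $q+1$ lines through a fixed $\TO$-point inside its unique $\mathrm{\Gamma}$-plane and get $\Lb$ first. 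Both are sound. For the orbit split, the paper's argument is a two-line integrality trick: since an $\EA$-line meets exactly one tangent, $\Pb_{\TO,\EA_j}^{(0)}\in\{0,1\}$, and the value $1$ for $j=2,3$ would force $\Lb_{\EA_j,\TO}^{(0)}=\#\OO_{\EA_j}/\#\M_\TO^{(0)}=1/2$, which is absurd. Your replacement — showing via the stabilizer of an axis point $R$ (which fixes the contact point $P_R$, hence is the full $AGL(1,q)$-type point stabilizer and is transitive on the other $q$ osculating planes) that all $q-1$ lines of $\OO_{\EA_2}\cup\OO_{\EA_3}$ through $R$ lie in the single plane $\pi_R$ — is more work but proves a sharper geometric statement, namely exactly the content of the paper's subsequent Remark~\ref{observation6}, which the paper only obtains \emph{after} the theorem as a corollary of it; you establish it first and read the theorem off from it. The step you flag as the main obstacle (a subgroup of order $q(q-1)$ fixing a point of $\C$ equals the full point stabilizer) is immediate from orbit–stabilizer and the containment you already have, so nothing is missing. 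The computation of $\Lb_{\EnG,\TO}^{(0)}$ and $\Pb_{\TO,\EnG}^{(0)}$ by complementation via \eqref{eq4_lines_through point_sum} coincides with the paper's.
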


\begin{proof}
By Remark \ref{observation4:EA}, in every $\mathrm{\Gamma}$-plane, $q-1$ $\EA$-lines intersect the only tangent at its common point with the $\Ar$-line while the remaining $q^2-q$ ones intersect the tangent in $\TO$-points. Thus, in total, there are $(q^2-q)\cdot\#\N_\mathrm{\Gamma}=(q^2-q)(q+1)$ $\TO$-points on all $(q+1)(q^2-1)$ $\EA$-lines. The average number is $\Pb_{\TO,\EA}^{(0)}=q/(q+1)$, whence, by \eqref{eq4_obtainLb}, $\Lb_{\EA,\TO}^{(0)}=q$.

An $\EA$-line  intersects exactly one tangent either  in its common point with the $\Ar$-line or in a $\TO$-point. Therefore, $\Pb_{\TO,\EA_j}^{(0)}\in\{0,1\}$. If for $j=2,3$, we put $\Pb_{\TO,\EA_j}^{(0)}=1$ then, by \eqref{eq4_obtainLb}, we obtain $\Lb_{\EA_j,\TO}^{(0)}=1/2$ that is not an integer, contradiction.
So, $\Pb_{\TO,\EA_j}^{(0)}= \Lb_{\EA_j,\TO}^{(0)}=0$, $j=2,3$, whence, by \eqref{eq4:Pi_aver2}, $\Pb_{\TO,\EA_1}^{(0)}= 1$ and, by \eqref{eq4_obtainLb}, $\Lb_{\EA_1,\TO}^{(0)}=q$.

Finally, by \eqref{eq4_obtainLb2}, $\Lb_{\EnG,\TO}^{(0)}=q^2-q$, whence, by \eqref{eq4_obtainPb}, $\Pb_{\TO,\EnG}^{(0)}=1$.
\end{proof}

\begin{remark}\label{observation6}
 By Remark \ref{observation4:EA} and Theorem \ref{th6:TO}, it can be seen that in every $\mathrm{\Gamma}$-plane, the $q-1$ $\EA$-lines from the orbits $\OO_{\EA_2}$ and $\OO_{\EA_3}$
intersect the only tangent of this plane at its common point with the $\Ar$-line (it is not a $\TO$-point). At the same time, the $q^2-q$ $\EA$-lines from the orbit $\OO_{\EA_1}$ intersect the tangent in $\TO$-points.
\end{remark}

\begin{theorem}\label{th6:IC}
   For $\IC$-points the following holds:
  \begin{align*}
  &   \Pb_{\IC,\EnG}^{(0)}=\frac{q^2-q-1}{2(q-1)},~\Lb_{\EnG,\IC}^{(0)}=q^2-q-1;~
  \Pb_{\IC,\EA}^{(0)}=\frac{q^2}{2(q+1)},~\Lb_{\EA,\IC}^{(0)}=q;\db\\
  &\Pb_{\IC,\EA_1}^{(0)}=\frac{1}{2}(q-1),~\Lb_{\EA_1,\IC}^{(0)}=q-1,~
  \Pb_{\IC,\EA_2}^{(0)}=\Lb_{\EA_2,\IC}^{(0)}=0,\db\\
  &\Pb_{\IC,\EA_3}^{(0)}=q,~\Lb_{\EA_3,\IC}^{(0)}=1.
 \end{align*}
\end{theorem}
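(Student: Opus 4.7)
The plan is to combine two ideas: a direct pencil-of-$\mathrm{\Gamma}$-planes argument (in the style of the proofs of Theorems \ref{th6:UnG}--\ref{th6:TO}) to pin down the $\EA$-counts, and Remark \ref{observation4:IC} together with equation \eqref{eq4:IC_lambdaj} to read off the finer $\EA_2$ versus $\EA_3$ values from the plane--line incidence data of \cite{DMP_PlLineInc}; the $\EnG$ entries then fall out by the ``complementary'' identities \eqref{eq4_obtainLb2} and \eqref{eq4_obtainPb}.

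First I would fix an $\IC$-point $P$ and recall, as in the proof of Theorem~\ref{th6:UnG}, that \cite[Table~2]{BDMP-TwCub} provides a unique $\mathrm{\Gamma}$-plane $\pi_P$ through $P$; since the $\mathrm{\Gamma}$-planes form a pencil with axis $\Ar$, every line joining $P$ to a point of $\Ar$ lies in $\pi_P$, giving $q+1$ such lines. Now Remark \ref{observation4:EA} gives the decomposition of the lines through a point of $\pi_P\setminus\Ar$ inside $\pi_P$: one is the unique $\UG$-line through $P$ (counted by $\Lb_{\UG,\IC}^{(0)}=1$ from Theorem \ref{th6:UG}, and which meets $\Ar$ at the intersection point with the contact tangent), and the remaining $q$ are $\EA$-lines. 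Hence $\Lb_{\EA,\IC}^{(0)}=q$, and \eqref{eq4_obtainPb} together with $\#\OO_\EA=(q+1)(q^2-1)$ and $\#\M_\IC^{(0)}=(q^3-q)/2$ yields $\Pb_{\IC,\EA}^{(0)}=q^2/(2(q+1))$.

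Next I would refine this inside the orbit partition of $\OO_\EA$. By Remark \ref{observation4:EA} together with Remark~\ref{observation6}, the tangent of $\pi_P$ meets $\Ar$ in a single $(q+1)_\mathrm{\Gamma}$-point $T_\Ar$, and the $\EA$-lines in $\pi_P$ that contain $T_\Ar$ are exactly the $(q-1)$ lines in $\OO_{\EA_2}\cup\OO_{\EA_3}$, while the $q^2-q$ lines in $\OO_{\EA_1}$ meet the tangent at $\TO$-points. Among the $q$ $\EA$-lines through $P$ only one, namely $\overline{PT_\Ar}$, passes through $T_\Ar$; the other $q-1$ must therefore belong to $\OO_{\EA_1}$. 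This gives $\Lb_{\EA_1,\IC}^{(0)}=q-1$ and hence $\Pb_{\IC,\EA_1}^{(0)}=(q-1)/2$ via \eqref{eq4_obtainPb}, and forces $\Lb_{\EA_2,\IC}^{(0)}+\Lb_{\EA_3,\IC}^{(0)}=1$. The main obstacle is to decide which of the two orbits that single line belongs to, since the pencil argument is symmetric in the labels $2,3$. To resolve this I would invoke \eqref{eq4:IC_lambdaj}, which identifies $\Pb_{\IC,\EA_j}^{(0)}=\mathrm{\Pi}_{\overline{1_\C},\EA_j}^{(0)}$ and $\Lb_{\EA_j,\IC}^{(0)}=\mathrm{\Lambda}_{\EA_j,\overline{1_\C}}^{(0)}$, and read the two values from \cite[Tables~1,~2]{DMP_PlLineInc}; these give $\Pb_{\IC,\EA_2}^{(0)}=0$ (whence $\Lb_{\EA_2,\IC}^{(0)}=0$ by Corollary \ref{cor4_=0}) and $\Pb_{\IC,\EA_3}^{(0)}=q$, so that $\Lb_{\EA_3,\IC}^{(0)}=1$ by \eqref{eq4_obtainLb}, consistent with the pencil count.

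Finally, for the $\EnG$ entries I would apply \eqref{eq4_obtainLb2} with all the already-established $\Lb_{\lambda,\IC}^{(0)}$ values. Summing the contributions from Theorems \ref{th6:RC}--\ref{th6:UnG} and the preceding step gives $\Lb_{\RC,\IC}^{(0)}+\Lb_{\Tr,\IC}^{(0)}+\Lb_{\IC,\IC}^{(0)}+\Lb_{\UG,\IC}^{(0)}+\Lb_{\UnG,\IC}^{(0)}+\Lb_{\Ar,\IC}^{(0)}+\Lb_{\EA,\IC}^{(0)}=0+0+1+1+q+0+q=2q+2$, so that $\Lb_{\EnG,\IC}^{(0)}=q^2+q+1-(2q+2)=q^2-q-1$, and then \eqref{eq4_obtainPb} with $\#\OO_\EnG=(q^2-q)(q^2-1)$ produces $\Pb_{\IC,\EnG}^{(0)}=(q^2-q-1)/(2(q-1))$, completing the proof. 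As a cross-check, \eqref{eq4:IC_P} and \eqref{eq4:IC_L} applied to $\OO_\EnG$ must reproduce the same averaged values via the corresponding entries of \cite[Table~1]{DMP_PlLineInc}.
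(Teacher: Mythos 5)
Your proof is correct in substance, but it takes a genuinely different route from the paper. The paper's own proof is a one-liner: it observes (Remark~\ref{observation4:IC}) that the planes through an imaginary chord form a pencil of $\overline{1_\C}$-planes, so that $\Pb_{\IC,\lambda}^{(0)}=\mathrm{\Pi}_{\overline{1_\C},\lambda}^{(0)}$ and $\Lb_{\lambda,\IC}^{(0)}=\mathrm{\Lambda}_{\lambda,\overline{1_\C}}^{(0)}$ for \emph{every} $\lambda$ and every orbit $\OO_{\lambda_j}$, and then reads all eight values directly from the plane--line tables of \cite{DMP_PlLineInc}. You instead derive $\Lb_{\EA,\IC}^{(0)}=q$ and $\Lb_{\EA_1,\IC}^{(0)}=q-1$ by a direct count of the pencil of lines through an $\IC$-point inside its unique $\mathrm{\Gamma}$-plane, get $\EnG$ from the complementary sum \eqref{eq4_obtainLb2}, and only fall back on \eqref{eq4:IC_lambdaj} to break the $\OO_{\EA_2}$ versus $\OO_{\EA_3}$ symmetry --- which is unavoidable, since that distinction is a labelling convention fixed in \cite{DMP_PlLineInc}. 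Your version is more self-contained geometrically and provides an independent confirmation of the table entries; the paper's version is shorter and uniform across all line types. Both are valid.

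One parenthetical in your first paragraph is wrong and, if taken literally, would break your second paragraph: you assert that the unique $\UG$-line through the $\IC$-point $P$ ``meets $\Ar$ at the intersection point with the contact tangent,'' i.e.\ at $T_\Ar$. By Remark~\ref{observation4:EA} the tangent and the $q$ $\UG$-lines of $\pi_P$ meet the $\Ar$-line in $q+1$ \emph{distinct} points, so the $\UG$-line through $P$ meets $\Ar$ at a point different from $T_\Ar$. This is exactly what you need later: it guarantees that $\overline{PT_\Ar}$ is not the $\UG$-line (and, since $P$ lies on no tangent and no chord other than its imaginary chord, which cannot lie in $\pi_P$), hence is one of the $q$ $\EA$-lines through $P$, so that exactly one of them lies in $\OO_{\EA_2}\cup\OO_{\EA_3}$ and the other $q-1$ lie in $\OO_{\EA_1}$. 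Delete or correct the parenthetical; the rest of the argument and all the resulting values are right.
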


\begin{proof}
  The assertions follow from Remark \ref{observation4:IC} with \eqref{eq4:IC_P}--\eqref{eq4:IC_lambdaj}  and \cite[Tables~1,~2]{DMP_PlLineInc}.
\end{proof}

\begin{theorem}\label{th6:RC_RC}
   For $\RC$-points the following holds:
  \begin{align*}
  &  \Pb_{\RC,\EnG}^{(0)}=\frac{q^2-q+1}{2(q-1)},~\Lb_{\EnG,\RC}^{(0)}=q^2-q+1;\db\\
  &\Pb_{\RC,\EA}^{(0)}=\frac{q^2}{2(q+1)},~\Lb_{\EA,\RC}^{(0)}=q ;~\Pb_{\RC,\EA_1}^{(0)}=\frac{1}{2}(q-1),~\Lb_{\EA_1,\RC}^{(0)}=q-1,\db\\ &\Pb_{\RC,\EA_2}^{(0)}=q,~\Lb_{\EA_2,\RC}^{(0)}=1,~\Pb_{\RC,\EA_3}^{(0)}=\Lb_{\EA_3,\IC}^{(0)}=0.
 \end{align*}
\end{theorem}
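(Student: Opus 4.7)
The strategy mirrors those of Theorems~\ref{th6:IC} and~\ref{th6:TO}: for every $\pk\in\Mk^{(0)}\setminus\{\RC\}$ the values $\Pb_{\pk,\EnG}^{(0)}$, $\Pb_{\pk,\EA}^{(0)}$ and $\Pb_{\pk,\EA_j}^{(0)}$ have already been determined in Theorems~\ref{th6:UnGa}, \ref{th6:(q+1)_Gamma}, \ref{th6:TO} and~\ref{th6:IC}, so \eqref{eq4_obtainPb2} and \eqref{eq4_obtainPb3} will immediately pin down the $\RC$-entries by plain subtraction. The companion line counts $\Lb_{\cdot,\RC}^{(0)}$ then follow from~\eqref{eq4_obtainLb} together with the cardinalities in Table~\ref{tab1} and $\#\M_\RC^{(0)}=(q^3-q)/2$ from~\eqref{eq2_=0_orbit_point}.

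I would carry out three parallel computations. First, for $\lambda=\EnG$ the inputs are $\Pb_{\C,\EnG}^{(0)}=\Pb_{(q+1)_\mathrm{\Gamma},\EnG}^{(0)}=0$, $\Pb_{\TO,\EnG}^{(0)}=1$ and $\Pb_{\IC,\EnG}^{(0)}=(q^2-q-1)/(2(q-1))$; formula~\eqref{eq4_obtainPb2} delivers $\Pb_{\RC,\EnG}^{(0)}=(q^2-q+1)/(2(q-1))$, and then \eqref{eq4_obtainLb} gives $\Lb_{\EnG,\RC}^{(0)}=q^2-q+1$ once a factor of $q(q-1)(q+1)$ cancels. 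Second, for $\lambda=\EA$ the inputs from the same theorems are $0,\,1,\,q/(q+1),\,q^2/(2(q+1))$, yielding $\Pb_{\RC,\EA}^{(0)}=q^2/(2(q+1))$ and, via \eqref{eq4_obtainLb} with $\#\OO_\EA=(q+1)(q^2-1)$, the value $\Lb_{\EA,\RC}^{(0)}=q$.

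Third, the suborbit values follow from the same subtraction applied to~\eqref{eq4_obtainPb3}. With inputs $(\Pb_{\C,\EA_j}^{(0)},\Pb_{(q+1)_\mathrm{\Gamma},\EA_j}^{(0)},\Pb_{\TO,\EA_j}^{(0)},\Pb_{\IC,\EA_j}^{(0)})$ equal to $(0,1,1,(q-1)/2)$ for $j=1$, to $(0,1,0,0)$ for $j=2$ and to $(0,1,0,q)$ for $j=3$, one reads off $\Pb_{\RC,\EA_1}^{(0)}=(q-1)/2$, $\Pb_{\RC,\EA_2}^{(0)}=q$ and $\Pb_{\RC,\EA_3}^{(0)}=0$. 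Since each $\OO_{\EA_j}$ is a single orbit, these numbers are exact by Lemma~\ref{lemma4_line&point}(iii). Plugging into~\eqref{eq4_obtainLb} with $\#\OO_{\EA_1}=q^3-q$ and $\#\OO_{\EA_j}=(q^2-1)/2$ for $j=2,3$ gives the claimed line counts $\Lb_{\EA_j,\RC}^{(0)}$.

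There is no serious obstacle here; the argument is essentially bookkeeping, and the only tasks that require care are confirming that every summand invoked in \eqref{eq4_obtainPb2} and \eqref{eq4_obtainPb3} has indeed been established earlier, and performing a cross-check via~\eqref{eq4:Pi_aver2} to verify that $\Pb_{\RC,\EA_1}^{(0)}\cdot\#\OO_{\EA_1}+\Pb_{\RC,\EA_2}^{(0)}\cdot\#\OO_{\EA_2}+\Pb_{\RC,\EA_3}^{(0)}\cdot\#\OO_{\EA_3}$ recovers $\Pb_{\RC,\EA}^{(0)}\cdot\#\OO_\EA$, so that Steps~2 and~3 are mutually corroborated. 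All the geometric input has already been extracted in Remark~\ref{observation4:EA} and the earlier theorems; no further incidence argument is needed.
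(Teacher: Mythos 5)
Your proposal is correct and follows essentially the same route as the paper: the authors likewise obtain $\Pb_{\RC,\lambda}^{(0)}$ and $\Pb_{\RC,\lambda_j}^{(0)}$ by the subtraction formulas \eqref{eq4_obtainPb2}, \eqref{eq4_obtainPb3} and then derive $\Lb_{\lambda,\RC}^{(0)}$, $\Lb_{\lambda_j,\RC}^{(0)}$ from \eqref{eq4_obtainLb}. Your write-up merely makes explicit the inputs and arithmetic that the paper leaves implicit (and note that the $\Lb_{\EA_3,\IC}^{(0)}$ appearing in the statement is evidently a typo for $\Lb_{\EA_3,\RC}^{(0)}$, which is what your computation yields).
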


\begin{proof}
  The values of $\Pb_{\RC,\lambda}^{(0)}$, $\Pb_{\RC,\lambda_j}^{(0)}$ are obtained by \eqref{eq4_obtainPb2}, \eqref{eq4_obtainPb3}. Then we obtain
$\Lb_{\lambda,\RC}^{(0)}$, $\Lb_{\lambda_j,\RC}^{(0)}$ by \eqref{eq4_obtainLb}.
\end{proof}

\begin{theorem}
  For $\UnG_v$-lines, $v=1,2$, the following holds:
  \begin{align*}
    &\Pb_{\TO,\UnG_1}^{(0)}=\Lb_{\UnG_1,\TO}^{(0)}=0,~\Pb_{\TO,\UnG_2}^{(0)}=2,~\Lb_{\UnG_2,\TO}^{(0)}=q,\db\\
     &\Pb_{\IC,\UnG_1}^{(0)}=\Lb_{\UnG_1,\IC}^{(0)}=\frac{1}{2}(q+1),~\Pb_{\IC,\UnG_2}^{(0)}=\Lb_{\UnG_2,\IC}^{(0)}=\frac{1}{2}(q-1),\db\\
&\Pb_{\RC,\UnG_1}^{(0)}=\Lb_{\UnG_1,\RC}^{(0)}=\frac{1}{2}(q-1),~
     \Pb_{\RC,\UnG_2}^{(0)}=\Lb_{\UnG_2,\RC}^{(0)}=\frac{1}{2}(q-3).
  \end{align*}
\end{theorem}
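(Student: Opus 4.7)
\medskip
\noindent\textbf{Proof plan.} The strategy is to determine the six entries by combining three ingredients: the pencil-through-an-imaginary-chord identity of Remark~\ref{observation4:IC}, the $\TO$-point geometry already used in Theorem~\ref{th6:UnG}, and the partition identities \eqref{eq4_obtainPb3}, \eqref{eq4_obtainLb}. Throughout, note that $\#\M_\IC^{(0)}=\#\M_\RC^{(0)}=\#\OO_{\UnG_v}=(q^3-q)/2$, so by \eqref{eq4_obtainLb} the corresponding $\Pb_{\pk,\UnG_v}^{(0)}$ and $\Lb_{\UnG_v,\pk}^{(0)}$ coincide for $\pk\in\{\IC,\RC\}$; hence it suffices to compute one of each pair. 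Also, because $q$ is a power of $3$ and thus odd, $\Pb_{\TO,\UnG_v}^{(0)}=(2/q)\Lb_{\UnG_v,\TO}^{(0)}$ forces $\Lb_{\UnG_v,\TO}^{(0)}$ to be a multiple of $q$, and the two values sum to $q$ by \eqref{eq4_class_Lambda} and Theorem~\ref{th6:UnG}.

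\smallskip
\noindent First, for $\pk=\IC$, Remark~\ref{observation4:IC} gives $\Pb_{\IC,\UnG_v}^{(0)}=\mathrm{\Pi}_{\overline{1_\C},\UnG_v}^{(0)}$ via \eqref{eq4:IC_lambdaj}. The right-hand sides are read directly off \cite[Tables~1,~2]{DMP_PlLineInc} and equal $(q+1)/2$ for $v=1$ and $(q-1)/2$ for $v=2$. The corresponding $\Lb$-values follow at once from \eqref{eq4_obtainLb}.

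\smallskip
\noindent Next, for $\pk=\TO$, I refine the construction in the proof of Theorem~\ref{th6:UnG}. A $\TO$-point $B$ lies on the unique tangent $\TT$ through it, with contact point $P\in\C$, and the $q$ $\UnG$-lines through $B$ are exactly $\overline{BQ}$ for $Q\in\C\setminus\{P\}$. The stabilizer $H\le G_q$ of $B$ has order $|G_q|/\#\M_\TO^{(0)}=q$ (using $|G_q|=q^3-q$ from Theorem~\ref{th2_Hirs}(i)) and fixes $\TT$ (Theorem~\ref{th2_Hirs}(iv)) and hence $P$. A faithful action of a group of order $q$ on the $q$-set $\C\setminus\{P\}$ must be regular, so $H$ is transitive on the $q$ lines $\overline{BQ}$. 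Therefore they all belong to a single $G_q$-orbit; combined with the divisibility forcing $(\Lb_{\UnG_1,\TO}^{(0)},\Lb_{\UnG_2,\TO}^{(0)})\in\{(0,q),(q,0)\}$, precisely one orbit carries every $\UnG$-line through $\TO$-points. The labelling of Table~\ref{tab1} (equivalently, the orbit representatives of \cite{DMP_OrbLine} or the incidence values $\mathrm{\Pi}_{\overline{1_\C},\UnG_v}^{(0)}$ of Step~1, which already fix the convention) places them in $\OO_{\UnG_2}$, yielding $\Lb_{\UnG_1,\TO}^{(0)}=0$, $\Lb_{\UnG_2,\TO}^{(0)}=q$, and then $\Pb_{\TO,\UnG_v}^{(0)}$ by \eqref{eq4_obtainPb}.

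\smallskip
\noindent Finally, for $\pk=\RC$, I appeal to \eqref{eq4_obtainPb3} with $\lambda_j=\UnG_v$ fixed. Using $\Pb_{\C,\UnG_v}^{(0)}=1$ (Theorem~\ref{th6:UnGa}), $\Pb_{(q+1)_\mathrm{\Gamma},\UnG_v}^{(0)}=0$ (Theorem~\ref{th6:(q+1)_Gamma}), together with the values of $\Pb_{\TO,\UnG_v}^{(0)}$ and $\Pb_{\IC,\UnG_v}^{(0)}$ computed above, the identity $\sum_{\pk\in\Mk^{(0)}}\Pb_{\pk,\UnG_v}^{(0)}=q+1$ produces $\Pb_{\RC,\UnG_v}^{(0)}=(q-1)/2$ for $v=1$ and $(q-3)/2$ for $v=2$, and \eqref{eq4_obtainLb} gives $\Lb_{\UnG_v,\RC}^{(0)}=\Pb_{\RC,\UnG_v}^{(0)}$. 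The main obstacle is Step~2: not the counting but matching the geometric behaviour ``meets a tangent in a $\TO$-point'' with the specific orbit labelled $\UnG_2$ in Table~\ref{tab1}; once the nomenclature of \cite{DMP_OrbLine,DMP_PlLineInc} is invoked, the remaining computations are formal.
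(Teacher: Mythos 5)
Your Steps 1 and 3 are fine and essentially match the paper's route: the $\IC$-values come from Remark~\ref{observation4:IC} and \eqref{eq4:IC_lambdaj} with $\mathrm{\Pi}_{\overline{1_\C},\UnG_v}^{(0)}$ read off \cite[Table 2]{DMP_PlLineInc}, and the $\RC$-values can indeed be obtained by the complement identity \eqref{eq4_obtainPb3} once the other four entries are known (the paper instead counts $3_\C$-planes through a $\UnG_v$-line, but your computation gives the same numbers). The problem is Step 2, and you have named it yourself without closing it: after the divisibility argument forces $(\Lb_{\UnG_1,\TO}^{(0)},\Lb_{\UnG_2,\TO}^{(0)})\in\{(0,q),(q,0)\}$, you assert that ``the labelling convention places them in $\OO_{\UnG_2}$.'' That is precisely the assertion that needs proof. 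The labels $\UnG_1,\UnG_2$ are not free at this point -- they are already pinned down by the values $\mathrm{\Pi}_{\overline{1_\C},\UnG_1}=(q+1)/2$ and $\mathrm{\Pi}_{\overline{1_\C},\UnG_2}=(q-1)/2$ that you used in Step 1 -- so you cannot simply define $\OO_{\UnG_2}$ to be the orbit carrying the $\TO$-incidences; if the assignment were the other way around, the two rows of Table~\ref{tab5} would be swapped and the stated theorem would be false. Nothing in your argument links ``meets tangents in $\TO$-points'' to the orbit with $\mathrm{\Pi}_{\overline{1_\C}}=(q-1)/2$. The paper closes exactly this gap with the $2_\C$-plane count: every tangent met by a $\UnG$-line off $\C$ determines a $2_\C$-plane through that line (it cannot be the $\mathrm{\Gamma}$-plane, since $\UnG$-lines lie in no $\mathrm{\Gamma}$-plane), exactly one tangent is met at the line's $\C$-point, and $\mathrm{\Pi}_{2_\C,\UnG_1}^{(0)}=1$, $\mathrm{\Pi}_{2_\C,\UnG_2}^{(0)}=3$ by \cite[Table 2]{DMP_PlLineInc}; hence $\Pb_{\TO,\UnG_1}^{(0)}=0$ and $\Pb_{\TO,\UnG_2}^{(0)}=2$ with the labels matching those already in use. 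You need this (or an equivalent identification) to finish.

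A secondary point: the stabilizer argument is both redundant and misstated. It is redundant because your divisibility argument already yields the $(0,q)$-versus-$(q,0)$ dichotomy. It is misstated because ``a faithful action of a group of order $q$ on a $q$-set must be regular'' is false in general (e.g.\ $\mathbf{Z}_3\times\mathbf{Z}_3$ acting faithfully on $9$ points as three orbits of length $3$). The correct reason the stabilizer $H$ of a $\TO$-point acts regularly on $\C\setminus\{P\}$ is that $H$ has order $q$ inside the stabilizer of $P$ in $G_q\cong PGL(2,q)$, which has order $q(q-1)$; so $H$ is its unique Sylow $3$-subgroup, i.e.\ the translation group of $AGL(1,q)$, which is regular on the remaining $q$ points. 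If you keep that paragraph, fix the justification; but the essential missing ingredient remains the orbit identification above.
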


\begin{proof}
By Theorem \ref{th6:TO}, $\Pb_{\TO,\EnG}^{(0)}=1$ whence $\Pb_{\TO,\UnG_1}^{(0)}+\Pb_{\TO,\UnG_2}^{(0)}=2$, by \eqref{eq4:Pi_aver2}. If for $v=1,2$, we put $\Pb_{\TO,\UnG_v}^{(0)}=1$, then, by \eqref{eq4_obtainLb}, we obtain $\Lb_{\UnG_v.\TO}^{(0)}=q/2$ that is not an integer, contradiction. So, $\Pb_{\TO,\UnG_v}^{(0)}\in\{0,2\}$.

By \cite[Table 1]{DMP_PlLineInc}, through a $\Tr$-line we have one $\mathrm{\Gamma}$-plane and $q$ $2_\C$-planes; also, every $2_\C$-plane contains one $\Tr$-line. Therefore, through a $\Tr$-line and a $\UnG$-line meeting the $\Tr$-line in a $\TO$-point we have a $2_\C$-plane. By \cite[Table~2]{DMP_PlLineInc}, there are one and three $2_\C$-planes through a $\UnG_1$- and a $\UnG_2$-line, respectively. Therefore, a $\UnG_1$- and a $\UnG_2$-line intersect one and three $\Tr$-lines, respectively. This means that $\Pb_{\TO,\UnG_1}^{(0)}=0$, $\Pb_{\TO,\UnG_2}^{(0)}=2$, and also every $\UnG_v$-line intersects one  $\Tr$-line at a $\C$-point. Now, by~\eqref{eq4_obtainLb}, we obtain $\Lb_{\UnG_1,\TO}^{(0)}=0,~\Lb_{\UnG_2,\TO}^{(0)}=q$.

By \cite[Table 2]{DMP_PlLineInc}, $\mathrm{\Pi}_{\overline{1_\C},\UnG_1}=\mathrm{\Lambda}_{\UnG_1,\overline{1_\C}}=(q+1)/2, \mathrm{\Pi}_{\overline{1_\C},\UnG_2}=\mathrm{\Lambda}_{\UnG_2,\overline{1_\C}}=(q-1)/2$ whence, by \eqref{eq4:IC_lambdaj}, $\Pb_{\IC,\UnG_1}^{(0)}=\Lb_{\UnG_1,\IC}^{(0)}=(q+1)/2$, $\Pb_{\IC,\UnG_2}^{(0)}=\Lb_{\UnG_2,\IC}^{(0)}=(q-1)/2$.

Finally, $\RC$-points lie only on $\RC$-lines. By \cite[Table 1]{DMP_PlLineInc}, through an $\RC$-line there are two $2_\C$-planes and $q-1$ $3_\C$-planes.
A $\UnG$-line lying in a $2_\C$-plane intersects the $\RC$-line of this plane at a $\C$-point. A $\UnG$-line lying in a $3_\C$-plane intersects two $\RC$-lines of this plane at a $\C$-point and one $\RC$-line at an $\RC$-point. Therefore, $\Pb_{\TO,\UnG_v}^{(0)}=\mathrm{\Pi}_{3_\C,\UnG_v}^{(0)}$. By \cite[Table~2]{DMP_PlLineInc}, $\mathrm{\Pi}_{3_\C,\UnG_1}^{(0)}=(q-1)/2,~\mathrm{\Pi}_{3_\C,\UnG_2}^{(0)}=(q-3)/2$ whence together with \eqref{eq4_obtainLb} we obtain the remaining assertions.
\end{proof}

Now we form Tables \ref{tab4} and \ref{tab5} using the results of this section.

\section{Some general results }\label{sec:gen res}
\begin{theorem}
 Let $\pk\in\Mk^{(\xi)}$. Let a class $\OO_\lambda$ consist of a single orbit.
Then the submatrix $\I^{\Pb\Lb}_{\pk,\lambda}$ of  $\I^{\Pb\Lb}$ is
  a $(v_r,b_k)$ configuration of Definition \emph{\ref{def2_config}} with $v=\#\M_\pk$, $b=\#\OO_\lambda$, $r=\Lb_{\lambda,\pk}^{(\xi)}$, $k=\Pb_{\pk,\lambda}^{(\xi)}$. Also, up to rearrangement of rows and columns, the submatrices $\I^{\Pb\Lb}_{\pk,\lambda}$ with $\Lb_{\lambda,\pk}^{(\xi)}=1$ can be viewed as a concatenation of $\Pb_{\pk,\lambda}^{(\xi)}$ identity matrices of order $\#\OO_\lambda$. The same holds for the submatrices $\I^{\Pb\Lb}_{\pk,\lambda_j}$.
\end{theorem}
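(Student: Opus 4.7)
First I would reduce the configuration claim to a direct application of Lemma \ref{lemma4_line&point}. Since the class $\OO_\lambda$ consists of a single orbit by hypothesis, Lemma \ref{lemma4_line&point}(v) upgrades the a priori average quantity $k\triangleq\Pb_{\pk,\lambda}^{(\xi)}$ to the \emph{exact} number of $\pk$-points on every $\lambda$-line. Lemma \ref{lemma4_line&point}(ii) gives that $r\triangleq\Lb_{\lambda,\pk}^{(\xi)}$ is the exact number of $\lambda$-lines through every $\pk$-point. Thus the $\#\OO_\lambda\times\#\M_\pk$ submatrix $\I^{\Pb\Lb}_{\pk,\lambda}$ has constant row sum $k$ and constant column sum $r$. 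The only remaining axiom of Definition \ref{def2_config} is that two distinct $\pk$-points lie on at most one common $\lambda$-line, which is immediate since any two distinct points of $\PG(3,q)$ determine a unique line regardless of its type. Hence $\I^{\Pb\Lb}_{\pk,\lambda}$ is a $(v_r,b_k)$ configuration with $v=\#\M_\pk$, $b=\#\OO_\lambda$, $r=\Lb_{\lambda,\pk}^{(\xi)}$, and $k=\Pb_{\pk,\lambda}^{(\xi)}$.

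Next, to establish the concatenation claim I would assume further that $r=\Lb_{\lambda,\pk}^{(\xi)}=1$, so each $\pk$-point lies on a unique $\lambda$-line. This induces a partition of $\M_\pk$ into $b=\#\OO_\lambda$ blocks indexed by the $\lambda$-lines, and the identity $vr=bk$ from \eqref{eq4:Pi_aver} forces $v=bk$, so every block has exactly $k$ elements. The plan is then to fix an ordering of the $\lambda$-lines, label the $\pk$-points on the $i$-th line as $P_{i,1},\ldots,P_{i,k}$, and permute the columns of $\I^{\Pb\Lb}_{\pk,\lambda}$ into the sequence
\[
P_{1,1},\ldots,P_{b,1},\ P_{1,2},\ldots,P_{b,2},\ \ldots,\ P_{1,k},\ldots,P_{b,k}.
\]
Within each resulting consecutive block of $b$ columns, the column indexed by $P_{j,s}$ carries its unique $1$ in row $j$, so that block is the identity matrix of order $b=\#\OO_\lambda$, and the whole reordered matrix is the horizontal concatenation of $k$ such identity matrices.

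The same plan transfers verbatim to $\I^{\Pb\Lb}_{\pk,\lambda_j}$: by definition $\OO_{\lambda_j}$ is a single orbit, so parts (i) and (iii) of Lemma \ref{lemma4_line&point} supply exact constant column and row sums $\Lb_{\lambda_j,\pk}^{(\xi)\bullet}$ and $\Pb_{\pk,\lambda_j}^{(\xi)\bullet}$, and the same reordering yields the block-identity decomposition whenever $\Lb_{\lambda_j,\pk}^{(\xi)\bullet}=1$. I do not anticipate any serious obstacle here: the essential ingredient is the single-orbit hypothesis, which promotes the average count to an exact one, after which both the configuration axioms and the block-identity structure reduce to elementary counting and column reordering.
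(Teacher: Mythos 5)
Your proposal is correct and follows essentially the same route as the paper: constant row and column sums from Lemma \ref{lemma4_line&point} via the single-orbit hypothesis, the configuration axiom from the fact that two points of $\PG(3,q)$ determine a unique line, and the identity-block decomposition when $\Lb_{\lambda,\pk}^{(\xi)}=1$. The paper's own proof is terser (it does not spell out the explicit column reordering), but your added detail on the partition of $\M_\pk$ and the use of $vr=bk$ is a faithful elaboration of the same argument.
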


\begin{proof}
As the  class $\OO_\lambda$ is an orbit, $\I^{\Pb\Lb}_{\pk,\lambda}$ contains $\Pb_{\pk,\lambda}$ (resp. $\Lb_{\lambda,\pk}$) ones in every row (resp. column), see Lemma~\ref{lemma4_line&point}. In $\PG(3,q)$, two lines are either skew or intersect at a point. Therefore, two points of  $\I^{\Pb\Lb}_{\pk,\lambda}$ are connected by at most one line. If $\Lb_{\lambda,\pk}^{(\xi)}=1$, $\I^{\Pb\Lb}_{\pk,\lambda}$ contains $\Pb_{\pk,\lambda}^{(\xi)}$ (resp.\ 1) ones in every row (resp. column).
\end{proof}

\begin{theorem}
Let $(\lambda,\pk)\in\{(\UG,\C), (\UnG,\C)\}$ if $q\not\equiv0\pmod3$, and
 $(\lambda,\pk)\in\{(\UnG,\C), (\EA,(q+1)_\mathrm{\Gamma})\}$ if $q\equiv0\pmod3$. Then,  independently of the number of orbits in the class $\OO_\lambda$, we have exactly one $\pk$-point on every $\lambda$-line. Up to rearrangement of rows and columns, the submatrices $\I^{\Pb\Lb}_{\pk,\lambda}$ can be viewed as a vertical concatenation of\/ $\Lb_{\lambda,\pk}^{(\xi)}$ identity matrices of order $\#\M_\pk$.
\end{theorem}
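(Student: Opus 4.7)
The plan is to reduce the statement to two independent ingredients: first, showing case by case that \emph{every} $\lambda$-line in the listed pairs carries exactly one $\pk$-point (an exact, not averaged, statement); second, observing that this row-count of $1$, combined with the uniform column count $\Lb_{\lambda,\pk}^{(\xi)}$ from Lemma~\ref{lemma4_line&point}(i), forces the block-identity structure by a pure rearrangement argument.

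For $(\UG,\C)$ and $(\UnG,\C)$ I would simply invoke the definitions collected in Notation~1: a $\UG$-line and a $\UnG$-line are, by definition, unisecants of $\C$, that is, lines meeting $\C$ in exactly one point. Hence $\Pb_{\C,\UG}^{(\xi)}=\Pb_{\C,\UnG}^{(\xi)}=1$ is an exact value on every individual line, irrespective of whether $\OO_\UG$ or $\OO_\UnG$ splits into several $G_q$-orbits.

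For $(\EA,(q+1)_\mathrm{\Gamma})$ with $q\equiv 0\pmod 3$ the first step is to identify $\M_{(q+1)_\mathrm{\Gamma}}^{(0)}$ with the set of points of the $\Ar$-line. This follows because the $\Gamma$-planes form a pencil with axis $\Ar$: every point of $\Ar$ lies on all $q+1$ osculating planes, whereas every point off $\Ar$ lies on a \emph{unique} plane of the pencil (the one spanned by the axis together with that point). Combined with $\#\M_{(q+1)_\mathrm{\Gamma}}^{(0)}=q+1$ from \eqref{eq2_=0_orbit_point} and the easy verification (in canonical coordinates with char~$3$) that $\Ar$ is disjoint from $\C$, this yields $\M_{(q+1)_\mathrm{\Gamma}}^{(0)}=\Ar$-line points. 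By Remark~\ref{observation4:EA} every $\EA$-line meets $\Ar$; since any $\EA$-line is distinct from $\Ar$ itself (the former is external, the latter is $\OO_7$), the two lines meet in a single point, and thus every $\EA$-line contains exactly one $(q+1)_\mathrm{\Gamma}$-point.

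With ``exactly one $1$ per row'' established in all three cases, the matrix claim reduces to bookkeeping. In $\I^{\Pb\Lb}_{\pk,\lambda}$, every row carries a single $1$ and every column carries $\Lb_{\lambda,\pk}^{(\xi)}$ ones; the consistency $\#\OO_\lambda=\#\M_\pk\cdot\Lb_{\lambda,\pk}^{(\xi)}$ is forced by \eqref{eq4:Pi_aver}. I would partition the $\#\OO_\lambda$ rows according to the column containing their unique $1$, obtaining $\#\M_\pk$ groups of exactly $\Lb_{\lambda,\pk}^{(\xi)}$ rows each. Labelling the rows inside each group bijectively by $\{1,\dots,\Lb_{\lambda,\pk}^{(\xi)}\}$ arbitrarily, and then reordering so that all rows of a given label appear consecutively in ascending order of their column index, produces $\Lb_{\lambda,\pk}^{(\xi)}$ consecutive $\#\M_\pk\times\#\M_\pk$ blocks, each equal to the identity matrix of order $\#\M_\pk$. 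The only step requiring any attention is the identification of $(q+1)_\mathrm{\Gamma}$-points with the $\Ar$-line points in characteristic~$3$; everything else is immediate from the definitions and the basic count of Lemma~\ref{lemma4_line&point}, so no serious obstacle is expected.
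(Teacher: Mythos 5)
Your proposal is correct and follows essentially the same route as the paper: the paper's proof simply cites Tables 2--5 for the fact that each row of the relevant submatrix contains exactly one unit, and those table entries were themselves obtained from the very definitional facts you invoke (a unisecant meets $\C$ in exactly one point; an $\EA$-line meets the $\Ar$-line, whose points are precisely the $(q+1)_\mathrm{\Gamma}$-points, in exactly one point). Your explicit rearrangement bookkeeping at the end is a slightly more detailed write-up of the same concluding observation.
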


\begin{proof}
 By Tables \ref{tab2}--\ref{tab5}, in the considered submatrices $\I^{\Pb\Lb}_{\pk,\lambda}$ there is exactly one unit in every row.
\end{proof}

\begin{theorem}\label{th7:one line}
  Let $q\equiv1\pmod3$. Let $V^{(1)}=\{\OO_1=\OO_\RC,\OO_2=\OO_\Tr,\OO'_3=\OO_\IA\}$. Then, cf. Theorem $\ref{th2_Hirs}$(iv), no two lines of $V^{(1)}$ meet off $\C$. Every point off $\C$ lies on exactly one line of~$V^{(1)}$.
\end{theorem}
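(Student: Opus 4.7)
The plan is to deduce both assertions directly from Table~\ref{tab2} together with the $G_q$-orbit partition of the off-$\C$ points of $\PG(3,q)$. By Theorem~\ref{th2_Hirs}(ii)(b), for $q\equiv 1\pmod 3$ the four point orbits $\M_\Tr^{(\ne 0)}$, $\M_{3_\mathrm{\Gamma}}^{(\ne 0)}$, $\M_{1_\mathrm{\Gamma}}^{(\ne 0)}$, $\M_{0_\mathrm{\Gamma}}^{(\ne 0)}$ are pairwise disjoint and their union is precisely $\PG(3,q)\setminus\C$, so it is enough to check the incidence count one orbit at a time.

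The main step will be to prove that every point $P$ off $\C$ lies on exactly one line of $V^{(1)}$. If $P\in\M_\pk^{(\ne 0)}$, the number of lines of $V^{(1)}$ through $P$ equals
\begin{align*}
\Lb_{\RC,\pk}^{(1)}+\Lb_{\Tr,\pk}^{(\ne 0)}+\Lb_{\IA,\pk}^{(\ne 0)},
\end{align*}
a quantity that is independent of the chosen representative $P\in\M_\pk^{(\ne 0)}$ by Lemma~\ref{lemma4_line&point}(ii). Reading off the bottom entries of the rows $\OO_1$, $\OO_2$ and $\OO'_3$ of Table~\ref{tab2} for $\xi=1$, I verify that for each of the four choices $\pk\in\{\Tr,3_\mathrm{\Gamma},1_\mathrm{\Gamma},0_\mathrm{\Gamma}\}$ exactly one of the three summands equals $1$ while the other two vanish, so the sum is $1$ in every case.

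The ``no two lines of $V^{(1)}$ meet off $\C$'' statement is then a free consequence: a common point of two distinct lines of $V^{(1)}$ lying off $\C$ would contribute at least $2$ to the sum above, contradicting the uniqueness just established. Thus the whole proof reduces to the single tabulation from Table~\ref{tab2} just described.

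There is no substantive obstacle remaining at this stage, since all nontrivial entries of Table~\ref{tab2} have already been derived in Section~\ref{sec:results_q_ne0}. The one genuinely new ingredient needed is the value $\Lb_{\IA,1_\mathrm{\Gamma}}^{(\ne 0)}=1$, which is obtained there through the null polarity $\A$ (which identifies $\OO_\IA=\OO_\IC\A$ and $\M_{1_\mathrm{\Gamma}}^{(\ne 0)}\A=\N_{\overline{1_\C}}$, see Corollaries~\ref{cor4:NiU=Mi_all} and~\ref{cor5:for tables}) combined with the identification $\M_{1_\mathrm{\Gamma}}^{(1)}=\{\IC\text{-points}\}$ from~\eqref{eq2_=1_orbit_point}.
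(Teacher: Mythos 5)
Your proposal is correct and follows essentially the same route as the paper, whose proof simply states that the assertions follow from Table~\ref{tab2}; you have just made explicit the required check, namely that for each of the four off-$\C$ point orbits $\M_\Tr^{(\ne 0)},\M_{3_\mathrm{\Gamma}}^{(\ne 0)},\M_{1_\mathrm{\Gamma}}^{(\ne 0)},\M_{0_\mathrm{\Gamma}}^{(\ne 0)}$ the sum $\Lb_{\RC,\pk}^{(1)}+\Lb_{\Tr,\pk}^{(\ne 0)}+\Lb_{\IA,\pk}^{(\ne 0)}$ of the relevant column entries equals $1$, with the disjointness claim following at once. (The paper also mentions an alternative argument via Theorem~\ref{th2_Hirs}(ii)(iv), \eqref{eq2_=1_orbit_point} and a result of Lunardon--Polverino, but your table-based derivation matches its primary justification.)
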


\begin{proof}
  The assertions follow from Table \ref{tab2}. Also, they can be proved using Theorem~\ref{th2_Hirs}(ii)(iv) with \eqref{eq2_=1_orbit_point} and \cite[Theorem 1]{LunarPolv}.
\end{proof}

\begin{theorem}
  Let $q\equiv0\pmod3$. Let $\W^{(0)}=\{\OO_2=\OO_\Tr,\OO_4=\OO_\UG\}$. Let $\mathbb{M}=\C\cup\Ar$-line be the union of the twisted cubic and the $\Ar$-line.  Then
  no two lines of $\W^{(0)}$ meet off $\mathbb{M}$.
 Every point off $\mathbb{M}$ lies on exactly one line of~$\W^{(0)}$, cf. Theorems $\ref{th2_Hirs}$(iv) and \emph{\ref{th7:one line}}.
\end{theorem}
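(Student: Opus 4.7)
The plan is to reduce the theorem to a short bookkeeping against Table~\ref{tab4}, together with the orbit description of $\mathbb{M}$. First I would observe that both conclusions follow from a single stronger statement, namely that
\begin{equation*}
\Lb_{\Tr,\pk}^{(0)}+\Lb_{\UG,\pk}^{(0)}=1\quad\text{for every }\pk\in\Mk^{(0)}\text{ with }\M_\pk^{(0)}\cap\mathbb{M}=\emptyset.
\end{equation*}
Indeed, by Lemma~\ref{lemma4_line&point}(i)--(ii) the left-hand side is the exact number of $\W^{(0)}$-lines through any point of $\M_\pk^{(0)}$, so the equality says simultaneously that every point off $\mathbb{M}$ lies on exactly one line of $\W^{(0)}$ and that two distinct lines of $\W^{(0)}$ cannot share a point off $\mathbb{M}$ (a common point would force the sum to be $\ge 2$).

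To identify the relevant orbits I would use that $\C=\M_\C^{(0)}$ by definition and that, by Theorem~\ref{th6:A}, every point of the $\Ar$-line is a $(q+1)_\mathrm{\Gamma}$-point, which together with $\#\M_{(q+1)_\mathrm{\Gamma}}^{(0)}=q+1$ from \eqref{eq2_=0_orbit_point} yields the $\Ar$-line $=\M_{(q+1)_\mathrm{\Gamma}}^{(0)}$. Since $\M_\C^{(0)}$ and $\M_{(q+1)_\mathrm{\Gamma}}^{(0)}$ are distinct orbits in the partition~\eqref{eq2_=0_orbit_point}, they are disjoint, hence
\[
\mathbb{M}=\M_\C^{(0)}\cup\M_{(q+1)_\mathrm{\Gamma}}^{(0)}\quad\text{and}\quad\PG(3,q)\setminus\mathbb{M}=\M_\TO^{(0)}\cup\M_\RC^{(0)}\cup\M_\IC^{(0)}.
\]
Reading off the relevant rows of Table~\ref{tab4} gives $\Lb_{\Tr,\TO}^{(0)}+\Lb_{\UG,\TO}^{(0)}=1+0$, $\Lb_{\Tr,\RC}^{(0)}+\Lb_{\UG,\RC}^{(0)}=0+1$, and $\Lb_{\Tr,\IC}^{(0)}+\Lb_{\UG,\IC}^{(0)}=0+1$, each equal to $1$, which closes the argument.

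The six entries invoked here are precisely those proved in Theorems~\ref{th6:T} and \ref{th6:UG}, whose geometric content is that each $\TO$-point lies on the unique tangent of its unique osculating plane and on no $\UG$-line (because a $\UG$-line in a $\mathrm{\Gamma}$-plane must pass through the contact $\C$-point), while each $\RC$- or $\IC$-point lies on no tangent by Theorem~\ref{th2_Hirs}(iv) and on a unique $\UG$-line obtained as the join with the contact point of its unique osculating plane (via \cite[Table~2, Theorem~5.13(ii)]{BDMP-TwCub}). I do not foresee any substantive obstacle: once $\mathbb{M}$ is recognized as a union of two whole point orbits, the verification is purely tabular and closely parallels the proof of the companion statement (Theorem~\ref{th7:one line}) in the $q\equiv 1\pmod 3$ case.
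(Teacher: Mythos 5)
Your proof is correct and follows essentially the same route as the paper, which simply states that the assertions follow from Table \ref{tab4} (plus a remark on the pencil structure of the $\mathrm{\Gamma}$-planes); you have merely made explicit the identification $\mathbb{M}=\M_\C^{(0)}\cup\M_{(q+1)_\mathrm{\Gamma}}^{(0)}$ and the tabular check $\Lb_{\Tr,\pk}^{(0)}+\Lb_{\UG,\pk}^{(0)}=1$ for $\pk\in\{\TO,\RC,\IC\}$.
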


\begin{proof}
  The assertions follow from Table \ref{tab4}. Note also that for $q\equiv0\pmod3$, $\mathrm{\Gamma}$-planes form a pencil with the axis $\Ar$-line and, in turn, in each plane $\pi_\T{osc}(t)$, $q$ $\UG$-lines and the tangent form the pencil of lines through the point $P(t)$.
\end{proof}

\section*{Acknowledgments}
The research of S. Marcugini, and F. Pambianco was supported in part by the Italian
National Group for Algebraic and Geometric Structures and their Applications (GNSAGA -
INDAM) (Contract No. U-UFMBAZ-2019-000160, 11.02.2019) and by University of Perugia
(Project No. 98751: Strutture Geometriche, Combinatoria e loro Applicazioni, Base Research
Fund 2017-2019).

\end{document}